\newcommand{\fg}{\mathfrak g}
\newcommand{\C}{\mathbb{C}}
\newcommand{\N}{\mathbb N}
\newcommand{\Z}{\mathbb Z}
\newcommand{\rd}{\mathrm{d}}
\newcommand{\rD}{\mathrm{D}}
\newcommand{\rh}{\mathrm{h}}
\newcommand{\re}{\mathrm{e}}
\newcommand{\rf}{\mathrm{f}}
\newcommand{\ba}{\begin {eqnarray}}
\newcommand{\ea}{\end {eqnarray}}
\newcommand{\baa}{\begin {eqnarray*}}
\newcommand{\eaa}{\end {eqnarray*}}
\newcommand{\be}{\begin {equation}}
\newcommand{\ee}{\end {equation}}
\newcommand{\bee}{\begin {equation*}}
\newcommand{\eee}{\end {equation*}}
\newcommand{\U}{\mathcal{U}}
\def \<{{\langle}}
\def \>{{\rangle}}
\def \({ \left( }
\def \){ \right) }
\theoremstyle{Theorem}
\theoremstyle{Theorem}
\newtheorem{thm}{Theorem}[section]
\newtheorem{corollary}[thm]{Corollary}
\newtheorem{lemma}[thm]{Lemma}
\newtheorem{proposition}[thm]{Proposition}
\newtheorem{remark}[thm]{Remark}
\numberwithin{equation}{section}
\begin{document}

\title[Whittaker modules over loop Virasoro algebra]{Whittaker modules over the loop Virasoro algebra}

\author{Zhiqiang Li$^1$}
\address{School of Mathematics and Statistics, Xinyang Normal University,
 Xinyang, China 464000} \email{lzq06031212@xynu.edu.cn}
 \thanks{$^1$Partially Supported by Nanhu Scholars Program of
XYNU(No. 2019021).}

 \author{Shaobin Tan$^2$}
 \address{School of Mathematical Sciences, Xiamen University,
 Xiamen, China 361005} \email{tans@xmu.edu.cn}
  \thanks{$^2$Partially supported by NSF of China (No. 12131018).}

\author{Qing Wang$^3$}
\address{School of Mathematical Sciences, Xiamen University,
 Xiamen, China 361005} \email{qingwang@xmu.edu.cn}
 \thanks{$^3$Partially Supported by China NSF grants (No. 12571033).}

\subjclass[2010]{17B67, 17B10}
\keywords{Whittaker module, loop Virasoro algebra, affine Lie algebra $\widehat{\mathfrak{sl}_{2}}$}

\maketitle

\begin{abstract}
In this paper, we first study two classes of Whittaker modules over the loop Witt algebra
$\fg:=\mathcal{W}\otimes\mathcal{A}$, where $\mathcal{W}=\text{Der}(\C[t])$, $\mathcal{A}=\C[t,t^{-1}]$. The necessary and sufficient conditions for these Whittaker modules being simple are determined.
Furthermore, we study a family of Whittaker modules
over the loop Virasoro algebra $\mathfrak{L}:=Vir\otimes\mathcal{A}$, where $Vir$ is the Virasoro algebra. The irreducibility criterion for these Whittaker modules are obtained. As an application,
we give the irreducibility criterion for universal Whittaker modules of the affine Lie algebra
$\widehat{\mathfrak{sl}_{2}}$.

\end{abstract}

\section{Introduction}
Whittaker modules are important objects in the study of
representation theory of Lie algebras. Arnal and Pinzcon first defined the Whittaker module
for $\mathfrak{sl}_{2}$ in \cite{AP}.
Then Whittaker modules for complex semisimple Lie algebras were defined by Kostant in \cite{Kos}. Specially,
let $\mathfrak{g}=\mathfrak{n}_{-}\oplus\mathfrak{h}\oplus\mathfrak{n}_{+}$ be a complex semisimple Lie algebra
and $\eta:\mathfrak{n}_{+}\rightarrow\C$ be a Lie algebra homomorphism.
A $\mathfrak{g}$-module is called a Whittaker module
if $x-\eta(x)$ acts locally nilpotently for any $x\in\mathfrak{n}_{+}$.
When the Lie algebra homomorphism $\eta$ is non-singular, Kostant proved that simple Whittaker modules
are in one-to-one correspondence with maximal ideals of the center $Z(\mathfrak{g})$ of
the universal enveloping algebra $\U(\mathfrak{g})$. In \cite{Blo}, Block proved that simple
modules for $\mathfrak{sl}_2$ are two families:
one is a family of simple weight modules,
and the other is a family of non-weight modules including Whittaker modules.
Whittaker modules have been extensively studied for various algebraic structure, for example,
quantum groups \cite{Ond,S}, (generalized) Weyl algebras \cite{BO},
affine Lie algebras \cite{ALZ,Chr,CGLW,CJ,GL,M}, twisted Heisenberg-Virasoro algebras of rank two \cite{TWX},
Euclidean Lie algebra $\mathfrak{e}(3)$ \cite{CSZ},
the Virasoro algebra \cite{FJK,LGZ,OW,Y}, classical Lie superalgebras in \cite{Che}
and so on. Stimulated by these works, Batra and Mazorchuk introduced a Whittaker pair and then they defined
Whittaker modules based on the Whittaker pairs in \cite{BM} (see also \cite{MZ2}).
In this paper, we consider a family of Whittaker pairs for the loop Virasoro algebra $\mathfrak{L}$
and the loop Witt algebra $\mathfrak{g}$, then we study their corresponding Whittaker modules, respectively.

Let $\mathcal{A}=\C[t,t^{-1}]$  be the Laurent polynomial
algebra and let $\mathcal{A}^{+}=\C[t]$ be a subalgebra of it.
Witt Lie algebra $\mathrm{Der}(\mathcal{A})$ (resp. $\mathcal{W}=\mathrm{Der}(\mathcal{A}^{+})$)
is the derivative Lie algebra of $\mathcal{A}$ (resp. $\mathcal{A}^{+}$).
And the Virasoro algebra $Vir$ is the universal central extension of $\mathrm{Der}(\mathcal{A})$ with the basis
$\{\rd_{n}:=t^{n+1}\frac{\rd}{\rd t}, \bm{c}\mid n\in\Z\}$. The representation theory of $Vir$ has been
fully studied in \cite{Mat,MaP,MZ1,MZ2} and references therein. In this paper, we consider
the loop Virasoro algebra $\mathfrak{L}=Vir\otimes\mathcal{A}$
(resp. loop Witt algebra $\mathfrak{g}=\mathrm{Der}(\mathcal{A}^{+})\otimes\mathcal{A}$).
The structure theory of $\mathfrak{L}$ has been studied
in \cite{OR} from the point of view of the Kadomtsev-Petviashvili equation,
while the representation theory of $\mathfrak{L}$ has been studied in \cite{GLZ,LG}. Specifically,
the $\Z$-graded Harish-Chandra modules over $\mathfrak{L}$ were classified in \cite{GLZ}.
And Liu-Guo studied a kind of Whittaker module over $\mathfrak{L}$ in \cite{LG}.
They use $\Z^{2}$-gradation of $\mathfrak{L}$ to define Whittaker modules under certain total order of $\Z^{2}$.
Specifically, for any total order $``\preceq"$ on $\Z^{2}$, set
$\mathfrak{L}_{+}=\text{Span}_{\C}\{\rd_{i}\otimes t^{k},
\bm{c}\otimes t^{l}\mid (i,k)\succ(0,0), (0,l)\succ(0,0)\}$. They proved that
$(\mathfrak{L},\mathfrak{L}_{+})$ is a Whittaker pair if and only if
the order $``\preceq"$ is discrete, and with the discrete order $``\preceq"$,
they defined a universal Whittaker module for $\mathfrak{L}$ and determined all Whittaker vectors.
While the Whittaker modules for $\mathfrak{L}$ we studied in this paper are different from those in \cite{LG},
we use the $\Z$-gradation of $\mathfrak{L}$
which is induced from the natural $\Z$-gradation of $Vir$ to give Whittaker pairs and
then to study the corresponding Whittaker modules.

The paper is organized as follows. In Section $2$, we introduce two kinds of Whittaker pairs
$(\fg,\fg_{\geq N})$ and $(\fg,\fg_{-1})$ for the loop Witt algebra $\fg$
with a positive integer $N$. Then we define two families of universal
Whittaker modules $W(\fg_{\geq N},\phi)$ (resp. $W(\fg_{-1},\phi)$) over $\fg$
associated to $(\fg,\fg_{\geq N})$ (resp. $(\fg,\fg_{-1})$), where
$\phi:\fg_{\geq N}\rightarrow \C$ (resp. $\phi:\fg_{-1}\rightarrow \C$)
is a Lie algebra homomorphism.
In Section $3$ and Section $4$, the necessary and sufficient conditions
for $W(\fg_{\geq N},\phi)$ and $W(\fg_{-1},\phi)$ being simple are obtained.
In Section $5$, we first give an irreducibility criterion for a family of
Whittaker modules over the loop Virasoro algebra $\mathfrak{L}$,
then as an application, we character the simplicity of the universal Whittaker $\widehat{\mathfrak{sl}_{2}}$-modules.
We want to mention that the methods we used to study the Whittaker modules for $\fg$ in this paper have generality,
it may extend to other algebras like $\mathfrak{L}$ and $\widehat{\mathfrak{sl}_{2}}$.

All the Lie algebras considered in this paper are over the field $\C$ of complex numbers.
We denote the sets of integers, nonnegative integers, positive ingegers, complex numbers,
and nonzero complex numbers by $\Z$, $\N$, $\Z_+$, $\C$, and $\C^\times$, respectively.
For a Lie algebra $L$, we will use the notation $\U(L)$ for the universal enveloping algebra of $L$.

\section{preliminaries}
\subsection{Whittaker modules}
In this subsection, we recall some notions and results about Whittaker modules for later use.

From \cite{BM}, let $\mathcal{L}$ be a Lie algebra and $\mathfrak{a}$ a Lie subalgebra of $\mathcal{L}$.
The ordered pair $(\mathcal{L},\mathfrak{a})$ is called a $\textsl{Whittaker pair}$ if
$\mathfrak{a}$ acts on the $\mathfrak{a}$-module $\mathcal{L}/\mathfrak{a}$ locally nilpotently and  $\cap_{i=0}^{\infty}\mathfrak{a}_{i}=0$, where
\[\mathfrak{a}_{i+1}=[\mathfrak{a},\mathfrak{a}_{i}],\quad i=0,1,\dots,\ \mathfrak{a}_{0}=\mathfrak{a}.\]
$\phi$ is called a $\textsl{Whittaker function}$ if $\phi:\mathfrak{a}\rightarrow\C$ is a Lie algebra homomorphism.
Let $V$ be a $\mathcal{L}$-module.
A vector $v\in V$ is called a $\textsl{Whittaker vector of type}$ $\phi$ if $xv=\phi(x)v$ for all $x\in\mathfrak{a}$.
The module $V$ is called a $\textsl{Whittaker module of type}$ $\phi$
if $V$ is generated by a nonzero Whittaker vector of type $\phi$. For a Whittaker $\mathcal{L}$-module $V$ of type
$\phi$, we denote by $V_{\phi}$ the set of Whittaker vectors of type $\phi$, i.e.,
\[V_{\phi}=\{v\in V\mid xv=\phi(x)v,\ x\in\mathfrak{a}\}.\]
Let $\mathfrak{a}^{(\phi)}=\{x-\phi(x)\mid x\in \mathfrak{a}\}$,
which is a Lie subalgebra of the universal enveloping algebra $\U(\mathfrak{a})$.

We have the following results (see \cite{ALZ}, \cite{CJ}).
\begin{lemma}\label{ALZ}
Let $\mathcal{L}$ be a Lie algebra
and $\mathfrak{a}$ be a Lie subalgebra of $\mathcal{L}$ such that $\mathfrak{a}$ acts on the $\mathfrak{a}$-module $\mathcal{L}/\mathfrak{a}$ locally nilpotently,
and let $V$ be a Whittaker $\mathcal{L}$-module of type $\phi$. Then the following hold.\\
$(1)$ $\mathfrak{a}^{(\phi)}$ acts locally nilpotent on $V$.
In particular, $x-\phi(x)$ acts locally nilpotently on $V$ for any $x\in\mathfrak{a}$;\\
$(2)$ Any non-zero submodule of $V$ contains a non-zero Whittaker vector of type $\phi$;\\
$(3)$ If the vector space of Whittaker vector of $V$ is $1$-dimensional, then $V$ is simple.
\end{lemma}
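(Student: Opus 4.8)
The plan is to prove (1) first and then read off (2) and (3) quickly. For $x\in\mathfrak a$ write $\tilde x:=x-\phi(x)\in\U(\mathfrak a)$, so $\mathfrak a^{(\phi)}=\{\tilde x:x\in\mathfrak a\}$; let $(\mathfrak a^{(\phi)})^{j}\subseteq\U(\mathfrak a)$ be the span of the $j$-fold products $\tilde x_1\cdots\tilde x_j$, and set $V[m]=\{v\in V:(\mathfrak a^{(\phi)})^{m}v=0\}$. Fix a generating Whittaker vector $w$, so $V=\U(\mathcal L)w$, $w\in V[1]$, $V[0]=0$, and $\mathfrak a^{(\phi)}V[m]\subseteq V[m-1]$. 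Statement (1) is then precisely the assertion $V=\bigcup_m V[m]$.

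The one computation I would carry out is the elementary identity $\tilde x(yv)=[x,y]v+y\,\tilde x v$ for $x\in\mathfrak a$, $y\in\mathcal L$, $v\in V$, immediate from $xy=yx+[x,y]$ in $\U(\mathcal L)$. Iterating it gives, by induction on $N$, the inclusion
\[
(\mathfrak a^{(\phi)})^{N}(yv)\ \subseteq\ \sum_{s=0}^{N}\big(\mathrm{ad}(\mathfrak a)^{s}(y)\big)\cdot(\mathfrak a^{(\phi)})^{N-s}v ,
\]
where $\mathrm{ad}(\mathfrak a)^{s}(y)$ denotes the span of the $s$-fold brackets $[x_1,[x_2,\dots,[x_s,y]]]$. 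Now take $v\in V[n]$ and choose $r\ge 1$ minimal with $\mathrm{ad}(\mathfrak a)^{r}(y)\subseteq\mathfrak a$; such $r$ exists because $\mathfrak a$ acts locally nilpotently on $\mathcal L/\mathfrak a$. Put $N=n+r$. In the sum, the terms with $s\le r$ vanish since $(\mathfrak a^{(\phi)})^{N-s}v=0$ (as $N-s\ge n$); for $s\ge r+1$ one has $\mathrm{ad}(\mathfrak a)^{s}(y)\subseteq\mathrm{ad}(\mathfrak a)^{s-r}(\mathfrak a)\subseteq\mathfrak a_{s-r}\subseteq[\mathfrak a,\mathfrak a]\subseteq\ker\phi$, so every $z$ in it coincides with $\tilde z\in\mathfrak a^{(\phi)}$, and hence that term lies in $(\mathfrak a^{(\phi)})^{N-s+1}v\subseteq V[n-1]$ (using $(\mathfrak a^{(\phi)})^{j}v\subseteq V[n-j]$ for $j\le n$). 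Therefore $(\mathfrak a^{(\phi)})^{n+r}(yv)\subseteq V[n-1]$, so $(\mathfrak a^{(\phi)})^{2n+r-1}(yv)=0$, i.e. $yv\in\bigcup_m V[m]$. Since $w\in V[1]$ and $V$ is spanned by the vectors $y_1\cdots y_k w$, applying this finitely many times puts every such vector into some $V[m]$, giving $V=\bigcup_m V[m]$. This proves (1); the ``in particular'' clause follows from $(x-\phi(x))^{j}\in(\mathfrak a^{(\phi)})^{j}$.

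For (2): given a nonzero submodule $U$ and $0\ne v\in U$, pick the minimal $n\ge1$ with $(\mathfrak a^{(\phi)})^{n}v=0$ (which exists by (1)); then any nonzero $v'\in(\mathfrak a^{(\phi)})^{n-1}v$ lies in $\U(\mathfrak a)v\subseteq U$ and is annihilated by $\mathfrak a^{(\phi)}$, i.e. $xv'=\phi(x)v'$ for all $x\in\mathfrak a$, so $v'$ is a nonzero Whittaker vector of type $\phi$ in $U$. For (3): by (2) every nonzero submodule $U$ meets $V_\phi$ nontrivially, so if $\dim V_\phi=1$ then $U$ contains a nonzero scalar multiple of the generator $w$, whence $w\in U$ and $U=\U(\mathcal L)w=V$; thus $V$ is simple.

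I expect the only genuine obstacle to be part (1). The difficulty is that $\mathfrak a^{(\phi)}$ need not stabilize any obvious finite-dimensional piece of $V$, so the proof must exploit two effects simultaneously: local nilpotence of $\mathfrak a$ on $\mathcal L/\mathfrak a$ drives the iterated brackets $\mathrm{ad}(\mathfrak a)^{s}(y)$ into $\mathfrak a$, and one further bracket drives them into $[\mathfrak a,\mathfrak a]\subseteq\ker\phi$, which is exactly what lets the corresponding operators re-enter $\mathfrak a^{(\phi)}$ and thereby lower the annihilation level. Calibrating $N$ against the bracket-nilpotence index $r$ of $y$ and the level $n$ of $v$ is where the care is needed; once that estimate is in place, the reduction $V=\U(\mathcal L)w$ and statements (2) and (3) are formal.
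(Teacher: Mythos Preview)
Your proof is correct; the paper itself does not prove this lemma but simply cites [ALZ] and [CJ], and your argument via the annihilation filtration $V[m]=\{v:(\mathfrak a^{(\phi)})^{m}v=0\}$ together with the iterated commutator expansion of $\tilde x_1\cdots\tilde x_N(yv)$ is precisely the standard argument one finds in those references. Parts (2) and (3) are deduced exactly as you do.

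One small caveat worth flagging: the step ``choose $r\ge 1$ minimal with $\mathrm{ad}(\mathfrak a)^{r}(y)\subseteq\mathfrak a$'' tacitly uses the \emph{uniform} form of local nilpotence of $\mathfrak a$ on $\mathcal L/\mathfrak a$, namely that for each $\bar y$ some $r$ annihilates $\bar y$ under \emph{all} $r$-fold products $x_1\cdots x_r$, not merely that each individual $x\in\mathfrak a$ acts nilpotently. In the $\Z$-graded situations used throughout this paper (and in [ALZ], [CJ]) the strong form holds automatically because bracketing with $\mathfrak a$ raises degree, so this is a definitional point rather than a genuine gap in your reasoning.
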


\begin{lemma}\label{CJ}
Let $\mathcal{L}$ be a Lie algebra and $\mathfrak{a}$
be a Lie subalgebra of $\mathcal{L}$ such that $\mathfrak{a}$ acts
on the $\mathfrak{a}$-module $\mathcal{L}/\mathfrak{a}$ locally nilpotently,
and let $V$ be a Whittaker $\mathcal{L}$-module of type $\phi$. Then Whittaker vectors in $V$ are all of type $\phi$.
\end{lemma}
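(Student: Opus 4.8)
The plan is to show that any Whittaker vector in $V$ (a vector annihilated, up to scalars, by some one-codimensional... no — a vector $v$ such that the action of $\mathfrak{a}$ on it factors through a character) must in fact be acted on by the \emph{same} character $\phi$. First I would set up notation: let $v_{0}\in V$ be a cyclic Whittaker vector of type $\phi$, so $V=\U(\mathcal{L})v_{0}$, and suppose $w\in V$ is a Whittaker vector of type $\psi$, i.e.\ $xw=\psi(x)w$ for all $x\in\mathfrak{a}$, with $\psi:\mathfrak{a}\to\C$ a Lie algebra homomorphism. The goal is $\psi=\phi$. Using the PBW theorem together with the fact that $\mathfrak{a}$ acts locally nilpotently on $\mathcal{L}/\mathfrak{a}$, I would write $w$ as a finite sum $w=\sum_{u} u\,v_{0}$ where $u$ runs over an ordered PBW-type basis of monomials built from a complement of $\mathfrak{a}$ in $\mathcal{L}$ followed by elements of $\mathfrak{a}$; absorbing the $\mathfrak{a}$-part into $v_{0}$ via $xv_{0}=\phi(x)v_{0}$, we may assume $w=\sum_{u}c_{u}\,u\,v_{0}$ with $u$ monomials in a fixed basis $\{y_{j}\}$ of a chosen vector-space complement $\mathfrak m$ of $\mathfrak{a}$, and $c_u\in\C$.

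Next I would exploit the local nilpotence more carefully. For $x\in\mathfrak{a}$ and a basis element $y_{j}\in\mathfrak m$, the bracket $[x,y_{j}]$ lies in $\mathcal{L}=\mathfrak m\oplus\mathfrak{a}$; write $[x,y_{j}]=[x,y_{j}]_{\mathfrak m}+[x,y_{j}]_{\mathfrak{a}}$. Since $\mathfrak a$ acts locally nilpotently on $\mathcal L/\mathfrak a$, for fixed $x$ there is $n$ with $(\operatorname{ad}x)^{n}y_{j}\in\mathfrak a$ for all $j$ occurring in $w$. The key computation is then to expand $(x-\psi(x))^{N}w$ for large $N$: on one hand this is $0$ for $N$ large enough if $w\neq0$ is a Whittaker vector of type $\psi$ — wait, it is identically $0$ for \emph{all} $N\ge 1$ since $(x-\psi(x))w=0$. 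On the other hand, by Lemma \ref{ALZ}(1), $(x-\phi(x))$ acts locally nilpotently on $V$, hence on $w$: there is $M$ with $(x-\phi(x))^{M}w=0$. Now inside $\U(\mathcal L)$ we have the identity $(x-\phi(x))=(x-\psi(x))+(\psi(x)-\phi(x))$, and since $(x-\psi(x))$ kills $w$, applying $(x-\phi(x))^{M}$ to $w$ gives $(\psi(x)-\phi(x))^{M}w$. Therefore $(\psi(x)-\phi(x))^{M}w=0$, and as $w\neq0$ this forces $\psi(x)=\phi(x)$. Since $x\in\mathfrak a$ was arbitrary, $\psi=\phi$.

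The main obstacle — really the only subtle point — is establishing that $(x-\phi(x))$ acts locally nilpotently on \emph{all} of $V$, i.e.\ on the specific vector $w$; but this is exactly the content of Lemma \ref{ALZ}(1), which applies because $V$ is assumed to be a Whittaker module of type $\phi$ and $\mathfrak a$ acts locally nilpotently on $\mathcal L/\mathfrak a$ by hypothesis. Once that is in hand, the rest is the short algebraic manipulation above, using only that $\C$ is a field (so $(\psi(x)-\phi(x))^{M}w=0$ with $w\neq 0$ implies $\psi(x)=\phi(x)$) and that $\psi,\phi$ are genuine characters of $\mathfrak a$. I would present the argument in the order: (i) invoke Lemma \ref{ALZ}(1) to get local nilpotence of $x-\phi(x)$ on $w$; (ii) perform the $\U(\mathcal L)$ identity manipulation to deduce $(\psi(x)-\phi(x))^{M}w=0$; (iii) conclude $\psi(x)=\phi(x)$ for all $x\in\mathfrak a$, hence $\psi=\phi$. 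No PBW bookkeeping is actually needed in this streamlined version, so I would drop the complement-of-$\mathfrak a$ discussion from the final write-up.
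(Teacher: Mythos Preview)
Your argument is correct. The streamlined version you describe at the end---invoke Lemma~\ref{ALZ}(1) to get $(x-\phi(x))^{M}w=0$, rewrite $(x-\phi(x))=(x-\psi(x))+(\psi(x)-\phi(x))$ and use $(x-\psi(x))w=0$ to conclude $(\psi(x)-\phi(x))^{M}w=0$, hence $\psi(x)=\phi(x)$---is exactly the standard proof, and you are right to drop the PBW/complement discussion from the write-up since it plays no role.

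One remark on the comparison you were asked to make: the paper does not actually supply its own proof of this lemma. It is stated immediately after Lemma~\ref{ALZ} with the attribution ``We have the following results (see \cite{ALZ}, \cite{CJ})'' and no further argument. So there is nothing in the paper to compare your approach against; your proof simply fills in what the paper leaves to the cited references, and it does so correctly.
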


\subsection{Loop Witt algebra $\fg$ and Whittaker $\fg$-module}
Let $\mathcal{A}=\C[t,t^{-1}]$ and let $\mathcal{W}=\text{Der}(\C[t])$ be
the $\textsl{Witt Lie algebra}$. In this subsection, we review some basics on the $\textsl{loop Witt algebra}$
$\fg:=\mathcal{W}\otimes\mathcal{A}$ and consider two families of Whittaker modules over $\fg$.

It is well known that \[\mathcal{W}=\underset{i\in\Z,\ i\geq-1 }{\bigoplus}\C \rd_{i},\]
where $\rd_{i}=t^{i+1}\frac{\rd}{\rd t}$. The Lie brackets are given by
\[[\rd_{n},\rd_{m}]=(m-n)\rd_{n+m},\quad n,m\geq-1.\]
And the Lie brackets in $\fg$ are given by
\begin{align}\label{Lie-bracket}[\rd_{n}\otimes t^{k},\rd_{m}\otimes t^{l}]=(m-n)\rd_{n+m}\otimes t^{k+l},\quad n,m\geq-1,\ k,l\in\Z.\end{align}
There exists a $\Z$-grading on $\fg$ given by
\[\fg=\bigoplus_{n\in\Z}\fg_{n},\]
where $\fg_{n}=\rd_{n}\otimes\mathcal{A}$ for $n\geq-1$ and $\fg_{n}=0$ for $n<-1$.
In the rest of this paper, let $N\in\Z_{+}$ be fixed. Denote
\[\fg_{\geq N}=\bigoplus_{n\geq N}\rd_{n}\otimes\mathcal{A}.\]
It is clear that $\fg_{\geq N}$ is a subalgebra of $\fg$ and $(\fg,\fg_{\geq N})$ is a Whittaker pair.

Let $\phi:\fg_{\geq N}\rightarrow\C$ be a Whittaker function. For any $n\geq N$,
set $\phi_{n}\in\mathcal{A}^*$ such that
\begin{align}\label{psi-n}\phi_{n}(t^{r})=\phi(\rd_{n}\otimes t^{r})\quad \text{for all}\quad r\in\Z.\end{align}
Since $\phi$ is a Lie algebra homomorphism, we know that
\[\phi_{i}=0\quad\text{for all}\quad i\geq2N+1.\] Define the Whittaker module
$W(\fg_{\geq N},\phi)$ over $\fg$ as follows:
\begin{align}\label{Universal-Whittaker-mod}
W(\fg_{\geq N},\phi)=\U(\fg)\otimes_{\U(\fg_{\geq N})}\C v_{\phi},
\end{align}
where $\C v_{\phi}$ is the one dimensional $\fg_{\geq N}$-module given by
$x.v_{\phi}=\phi(x)v_{\phi}$ for any $x\in\fg_{\geq N}$. It is clear that for any Whittaker
$\fg$-module $V$ generated by a Whittaker vector $w$ of type $\phi$, there exists a surjective $\fg$-module
homomorphism \[\Phi: W(\fg_{\geq N},\phi)\rightarrow V\ \text{such that}\ \Phi(v_{\phi})=w.\]
Thus we call $W(\fg_{\geq N},\phi)$
$\textsl{the universal Whittaker module}$ associated to $(\fg,\fg_{\geq N})$ $\textsl{of type}$ $\phi$.

We know that $(\fg,\fg_{-1})$ is also a Whittaker pair.
Note that $\fg_{-1}$ is a commutative subalgebra of $\fg$, so for any $\phi\in\fg_{-1}^*$,
we can define the $\textsl{the universal Whittaker module}$
$W(\fg_{-1},\phi)=\U(\fg)\otimes_{\U(\fg_{-1})}\C v_{\phi}$ associated to
$(\fg,\fg_{-1})$ $\textsl{of type}$ $\phi$,
where $\C v_{\phi}$ is the one dimensional $\fg_{-1}$-module given by
$x.v_{\phi}=\phi(x)v_{\phi}$ for any $x\in\fg_{-1}$, similarly.

\section{Whittaker modules associated to $(\fg,\fg_{\geq N})$}
Let $\psi:\fg_{\geq N}\rightarrow\C$ be a Whittaker function.
This section is devoted to characterize the simplicity of
the universal Whittaker module $W(\fg_{\geq N},\psi)$. Firstly, we recall the exp-polynomial
function from \cite{BZ}.

A linear functional $\phi$ on $\mathcal{A}$
is called $\textsl{exp-polynomial}$ if it can be written as a finite sum
\[\phi(t^{n})=\sum_{k\in\N}\sum_{\lambda\in\C^{\times}}c_{k,\lambda}n^{k}\lambda^{n},\]
where $c_{k,\lambda}\in\C$. Denote by $\mathcal{E}$ the subspace of $\mathcal{A}^*$ consisting
of all exp-polynomial functionals on $\mathcal{A}$.
We define an $\mathcal{A}$-action on its dual space $\mathcal{A}^*$ by letting
\[(t^{m}.\phi)(t^n)=\phi(t^{n+m})\quad\text{for}\ m,n\in\Z,\ \phi\in\mathcal{A}^*.\]
This gives an $\mathcal{A}$-module structure on $\mathcal{A}^*$.
For every $\phi\in\mathcal{A}^*$, let \[\mathrm{Ann}(\phi)=\{h(t)\in\mathcal{A}\mid h(t).\phi=0\}\]
be the $\textsl{annihilator ideal}$ of $\mathcal{A}$ associated to $\phi$.
The following lemma is straightforward to prove and also can be referenced in \cite[Proposition 2.3]{W1}.
\begin{lemma}\label{lem:charCE}
For any $\phi\in\mathcal{A}^*$,
the annihilator ideal $\mathrm{Ann}(\phi)\neq 0$ if and only if $\phi\in\mathcal{E}$. Moreover,
$\mathcal{E}$ is an $\mathcal{A}$-submodule of $\mathcal{A}^*$.
\end{lemma}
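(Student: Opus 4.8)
The plan is to prove the two implications of the equivalence separately, and then deduce the ``moreover'' clause as a formal consequence. Throughout, write $\psi_{k,\lambda}\in\mathcal{A}^*$ for the functional $\psi_{k,\lambda}(t^n)=n^k\lambda^n$, so that $\mathcal{E}$ is by definition the span of the $\psi_{k,\lambda}$ with $k\in\N$, $\lambda\in\C^\times$.

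For the ``if'' direction, I would first record how $t-\lambda$ acts on these basic functionals. Using $((t-\lambda).\phi)(t^n)=\phi(t^{n+1})-\lambda\,\phi(t^n)$ and expanding $(n+1)^k$ by the binomial theorem, one sees that $(t-\lambda).\psi_{k,\lambda}$ is a linear combination of the $\psi_{j,\lambda}$ with $j<k$; by induction on $k$ this yields $(t-\lambda)^{k+1}.\psi_{k,\lambda}=0$. Consequently, if $\phi\in\mathcal{E}$ involves only the scalars $\lambda_1,\dots,\lambda_s\in\C^\times$ and the exponents $n^k$ with $k\le K$, then the nonzero Laurent polynomial $h(t)=\prod_{i=1}^s(t-\lambda_i)^{K+1}\in\mathcal{A}$ annihilates $\phi$, so $\mathrm{Ann}(\phi)\neq 0$.

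For the ``only if'' direction, suppose $0\neq h(t)\in\mathrm{Ann}(\phi)$. Since $t$ acts invertibly on $\mathcal{A}^*$ (with inverse the shift by $-1$), replacing $h$ by $t^m h$ does not affect membership in $\mathrm{Ann}(\phi)$; hence I may assume $h(t)=\prod_i(t-\lambda_i)^{m_i}=\sum_{j=0}^d a_j t^j$ with the $\lambda_i\in\C^\times$ distinct, $a_0\neq 0\neq a_d$, and $d=\sum_i m_i$. Unwinding $h(t).\phi=0$ shows that the bi-infinite sequence $\big(\phi(t^n)\big)_{n\in\Z}$ satisfies the linear recurrence $\sum_{j=0}^d a_j\,\phi(t^{n+j})=0$ for all $n\in\Z$; because $a_0,a_d\neq 0$ this recurrence propagates in both directions and its solution space is $d$-dimensional. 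As the $d$ sequences $n\mapsto n^k\lambda_i^n$ with $0\le k<m_i$ lie in this space (by the computation of the previous paragraph) and are linearly independent, they form a basis, so $\phi(t^n)=\sum_i\sum_{k=0}^{m_i-1}c_{i,k}\,n^k\lambda_i^n$ for suitable $c_{i,k}\in\C$, i.e. $\phi\in\mathcal{E}$. Finally, for the ``moreover'' part, $\mathcal{E}$ is plainly a linear subspace of $\mathcal{A}^*$, and for $\phi\in\mathcal{E}$ and any $f\in\mathcal{A}$ we have $\mathrm{Ann}(f.\phi)\supseteq\mathrm{Ann}(\phi)\neq 0$ since $h.(f.\phi)=f.(h.\phi)$ by commutativity of $\mathcal{A}$; hence $f.\phi\in\mathcal{E}$ by the equivalence just established.

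The only genuinely nonroutine point is the classification of the bi-infinite solutions of the recurrence in the ``only if'' direction, namely that the $d$ sequences $n^k\lambda_i^n$ both lie in and span the solution space. This is the standard characteristic-polynomial argument for linear recurrences — membership reuses the identity $(t-\lambda)^{k+1}.\psi_{k,\lambda}=0$, a Vandermonde-type argument gives linear independence, and the dimension count $\sum_i m_i=d$ forces spanning — and everything else in the proof is a short direct calculation.
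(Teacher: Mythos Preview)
Your proof is correct. The paper does not actually prove this lemma: it merely says the result ``is straightforward to prove and also can be referenced in \cite[Proposition 2.3]{W1}.'' Your argument --- constructing an explicit annihilator $\prod_i(t-\lambda_i)^{K+1}$ for the ``if'' direction, and invoking the standard characteristic-polynomial description of solutions to a linear recurrence for the ``only if'' direction --- is precisely the routine computation the paper is alluding to, and your deduction of the submodule property from the equivalence via $\mathrm{Ann}(f.\phi)\supseteq\mathrm{Ann}(\phi)$ is clean.
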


\begin{lemma}\label{lem:non-CE}
For any $\phi\in\mathcal{A}^*\setminus\mathcal{E}$ and $f(t)\in\mathcal{A}$, we have $f(t).\phi\in\mathcal{E}$ if and
only if $f(t)=0$. Thus for any $\phi\in\mathcal{A}^*\setminus\mathcal{E}$ and non-zero element $f(t)\in\mathcal{A}$, \[f(t).\phi\in\mathcal{A}^*\setminus\mathcal{E}.\]
\end{lemma}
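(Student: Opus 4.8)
The plan is to deduce this lemma from Lemma \ref{lem:charCE} together with the fact that $\CE$ is an $\CA$-submodule of $\CA^*$ that is stable under the natural $\CA$-action. The key observation is that the annihilator ideal $\mathrm{Ann}(\phi)$ is always an \emph{ideal} of $\CA$, and $\CA = \C[t,t^{-1}]$ is a principal ideal domain, so $\mathrm{Ann}(\phi)$ is either $0$ or all of $\CA$ (the latter being impossible since $1 \in \CA$ acts as the identity). Hence by Lemma \ref{lem:charCE}, $\phi \in \CA^* \setminus \CE$ forces $\mathrm{Ann}(\phi) = 0$.

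Now suppose $\phi \in \CA^* \setminus \CE$ and $f(t) \in \CA$ with $f(t).\phi \in \CE$. If $f(t) \ne 0$, then by Lemma \ref{lem:charCE} there is a non-zero $g(t) \in \mathrm{Ann}(f(t).\phi)$, i.e. $g(t).\bigl(f(t).\phi\bigr) = 0$. Since the $\CA$-action on $\CA^*$ is given by $(h.\phi)(t^n) = \phi(t^{n}h)$ and $\CA$ is commutative, this is the same as $\bigl(g(t)f(t)\bigr).\phi = 0$, so the non-zero element $g(t)f(t) \in \CA$ lies in $\mathrm{Ann}(\phi)$, contradicting $\mathrm{Ann}(\phi) = 0$. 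Therefore $f(t) = 0$. The converse implication is trivial, since $0.\phi = 0 \in \CE$. This establishes the first assertion, and the second (``for non-zero $f(t)$, $f(t).\phi \in \CA^* \setminus \CE$'') is just its contrapositive.

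There is essentially no obstacle here: the only point requiring a little care is the identification $g.(f.\phi) = (gf).\phi$, which follows directly from the definition of the $\CA$-module structure and commutativity of $\CA$. One should also verify that $f(t).\phi$ is well-defined as an element of $\CA^*$, which is immediate. The argument is entirely formal once Lemma \ref{lem:charCE} is in hand, so I expect the proof to be no more than a few lines.
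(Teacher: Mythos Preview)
Your argument is correct and matches the paper's proof almost verbatim: pick a nonzero $g(t)\in\mathrm{Ann}(f(t).\phi)$, observe $g(t)f(t)\in\mathrm{Ann}(\phi)=0$, and conclude $f(t)=0$ since $\mathcal{A}$ is an integral domain.

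One correction to your exposition: the sentence ``$\mathcal{A}=\C[t,t^{-1}]$ is a principal ideal domain, so $\mathrm{Ann}(\phi)$ is either $0$ or all of $\mathcal{A}$'' is false as stated --- a PID has plenty of nontrivial proper ideals (e.g.\ $(t-1)\subset\mathcal{A}$). Fortunately you do not actually use this claim: the fact that $\mathrm{Ann}(\phi)=0$ when $\phi\notin\mathcal{E}$ is exactly the contrapositive of Lemma~\ref{lem:charCE}, which you invoke anyway. Simply delete the PID remark and the proof is clean.
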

\begin{proof}If $f(t)=0$, it is obvious. Conversely, suppose $f(t).\phi\in\mathcal{E}$. This means that
there exists some non-zero polynomial $g(t)\in\mathrm{Ann}(f(t).\phi)$. That is $g(t)f(t)\in\mathrm{Ann}(\phi)$.
Thus $g(t)f(t)=0$, i.e., $f(t)=0$.
\end{proof}

Now we state the main result of this section, we prove it
by using Proposition \ref{psi-reducible} to Proposition \ref{EXP-2N-1}.
\begin{thm}\label{first-main-restlt}
Let $\psi:\fg_{\geq N}\rightarrow\C$ be a Whittaker function.
Then the universal Whittaker $\fg$-module $W(\fg_{\geq N},\psi)$ is simple if and only if
either $\psi_{2N-1}\notin\mathcal{E}$ or $\psi_{2N}\notin\mathcal{E}$.
\end{thm}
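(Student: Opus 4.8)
The plan is to analyze the PBW basis of $W(\fg_{\geq N},\psi)$ explicitly and compute the space of Whittaker vectors, then invoke Lemma~\ref{ALZ}(3) and Lemma~\ref{ALZ}(2). First I would fix a PBW-type basis of $\fg$ adapted to the triangular decomposition $\fg = \fg_{<N} \oplus \fg_{\geq N}$, so that by the PBW theorem $W(\fg_{\geq N},\psi) \cong \U(\fg_{<N})\otimes \C v_\psi$ as a vector space, where $\fg_{<N} = \bigoplus_{-1\le n < N}\rd_n\otimes\CA$. A monomial is then a product of the "lowering" generators $\rd_n\otimes t^k$ with $-1\le n\le N-1$, $k\in\Z$, applied to $v_\psi$, and one introduces a suitable total order (e.g. first by total "degree defect" $\sum(N-n_i)$, then lexicographically) so that the leading term of the action of any $\rd_m\otimes t^l$ with $m\ge N$ can be controlled. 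The key computational input is the bracket formula \eqref{Lie-bracket}: acting by $\rd_m\otimes t^l$ with $m\ge N$ on a monomial either lands back in $\fg_{\geq N}$ (producing scalars via $\psi$, i.e.\ the $\psi_j$'s) or produces, via commutators $[\rd_m\otimes t^l, \rd_n\otimes t^k] = (n-m)\rd_{m+n}\otimes t^{k+l}$ with $m+n \ge N-1$ i.e.\ still in the lowering part only when $n$ is small, a bounded "descent". The crucial observation is that the obstruction to a vector $w = \sum f \cdot v_\psi$ being Whittaker is governed, at the top graded level, by the action of $\rd_N\otimes\CA$ and $\rd_{N-1}\otimes\CA$ producing $\rd_{2N-1}\otimes\CA$ and $\rd_{2N}\otimes\CA$ terms, whose scalars involve $\psi_{2N-1}$ and $\psi_{2N}$ shifted by $\CA$; here the $\CA$-module structure on $\CA^*$ and Lemmas~\ref{lem:charCE}, \ref{lem:non-CE} enter decisively.

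For the "if" direction, suppose $\psi_{2N-1}\notin\CE$ or $\psi_{2N}\notin\CE$. I would show $W(\fg_{\geq N},\psi)_\psi = \C v_\psi$, and then Lemma~\ref{ALZ}(3) gives simplicity. Take a nonzero Whittaker vector $w$ and write it in the PBW basis; let $w = \sum_{\text{top defect}} f_i\cdot v_\psi + (\text{lower})$ where the leading monomials have some maximal number, say $d$, of lowering factors (or maximal total defect). Applying $\rd_N\otimes t^r$ and $\rd_{N-1}\otimes t^r$ to $w$ for all $r\in\Z$ and extracting the component of highest defect, one derives — using that $\rd_{m}\otimes t^l$ with $m \in \{N-1,N\}$ kills the factor $\rd_n\otimes t^k$ back into $\fg_{\geq N}$ only when $n \ge 1$ resp.\ $n\ge 0$ — a system of equations forcing each coefficient functional to satisfy $h(t).(\psi_{2N-1}) = 0$ and $h(t).(\psi_{2N}) = 0$ for the relevant $h(t)\in\CA$ built from the $t$-exponents of the leading monomials. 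Since by hypothesis $\psi_{2N-1}\notin\CE$ or $\psi_{2N}\notin\CE$, Lemma~\ref{lem:non-CE} forces those $h(t)$ to vanish, which contradicts $d \ge 1$. Hence $d=0$ and $w\in\C v_\psi$. This is the step I expect to be the main obstacle: organizing the filtration/leading-term bookkeeping so that the descent from $\rd_N, \rd_{N-1}$ genuinely isolates $\psi_{2N-1}$ and $\psi_{2N}$ and no lower $\psi_j$ can interfere — one must check that the "lower defect" correction terms cannot conspire to cancel the leading obstruction, which requires a careful choice of the monomial order and an induction on defect.

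For the "only if" direction, suppose $\psi_{2N-1}\in\CE$ and $\psi_{2N}\in\CE$. I would construct an explicit nonzero Whittaker vector $w\neq\C^\times v_\psi$, which by Lemma~\ref{ALZ}(2) (applied to the submodule $\U(\fg)w \subsetneq W$) shows $W(\fg_{\geq N},\psi)$ is not simple; concretely $\U(\fg)w$ is proper because $v_\psi\notin\U(\fg)w$ by a degree/defect argument. To build $w$: since $\psi_{2N-1},\psi_{2N}\in\CE$, Lemma~\ref{lem:charCE} gives nonzero $g(t)\in\mathrm{Ann}(\psi_{2N-1})$ and $\tilde g(t)\in\mathrm{Ann}(\psi_{2N})$; one then writes an ansatz $w = \big(\sum_r a_r\,\rd_{N-1}\otimes t^r + \sum_r b_r\,\rd_{N}\otimes t^r + \cdots\big)v_\psi + (\text{lower defect terms})v_\psi$ with the $a_r,b_r$ read off from the coefficients of $g$ and $\tilde g$, and solves recursively downward in defect for the lower-order corrections so that $\rd_m\otimes t^l$ acts by $\psi(\rd_m\otimes t^l)$ on $w$ for all $m\ge N$, $l\in\Z$. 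The recursion terminates because each application of a raising generator strictly decreases the defect and the "source terms" at each stage lie in the image of the relevant $\CA$-action, which is surjective onto what is needed precisely by the annihilator hypotheses together with Lemma~\ref{lem:charCE}. The propositions \ref{psi-reducible}--\ref{EXP-2N-1} referenced in the theorem statement presumably carry out exactly this case analysis (splitting on which of $\psi_{2N-1},\psi_{2N}$ lies in $\CE$), so I would structure the proof as: (i) reduce to these propositions, (ii) in each prove the leading-term identity above, (iii) assemble.
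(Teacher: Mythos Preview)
Your high-level strategy matches the paper's: for the ``if'' direction show $W(\fg_{\geq N},\psi)_\psi = \C v_\psi$ and invoke Lemma~\ref{ALZ}(3); for the ``only if'' direction exhibit a nontrivial Whittaker vector generating a proper submodule. But the execution has real gaps in both directions.

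For the ``if'' direction, you propose to apply $\rd_N\otimes t^r$ and $\rd_{N-1}\otimes t^r$ to a putative Whittaker vector. The second of these is not in $\fg_{\geq N}$, so it carries no Whittaker constraint and cannot be used. More importantly, even the operators in $\fg_{\geq N}$ that you would need are not $\rd_N$ and $\rd_{N+1}$ uniformly: bracketing $\rd_N$ with a factor $\rd_n\otimes t^k$ produces $\rd_{N+n}$, which hits $\psi_{N+n}$ rather than $\psi_{2N-1}$ or $\psi_{2N}$ whenever $n<N-1$, so ``lower $\psi_j$'' very much do interfere. The paper's mechanism for avoiding this is the point you are missing: one first proves (Proposition~\ref{some-property-Whit}, using $\rD(N,k)$ with $k\gg 0$) that any Whittaker vector has no $\rd_{-1}$-factors, and then, letting $\ell\in\{0,\dots,N-1\}$ be the \emph{minimum} degree appearing in the PBW support, one applies $\rD(2N-\ell,k)$ (resp.\ $\rD(2N-1-\ell,k)$). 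This choice is what cleanly isolates $\psi_{2N}$ (resp.\ $\psi_{2N-1}$), since $[\rD(2N-\ell,k),\rD(\ell,j)]$ lands in $\rd_{2N}\otimes\CA$ while brackets with any $\rD(j,\cdot)$ for $j>\ell$ land in $\rd_{>2N}\otimes\CA$ where $\psi$ vanishes. Your filtration/leading-term bookkeeping cannot substitute for this degree-adapted choice of operator.

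For the ``only if'' direction, your ansatz is both overcomplicated and partly ill-posed: $\rd_N\otimes t^r$ already lies in $\fg_{\geq N}$, so $(\rd_N\otimes t^r)v_\psi$ is a scalar multiple of $v_\psi$ and contributes nothing to a nontrivial Whittaker vector; and no recursive ``lower defect'' corrections are needed. The paper simply takes a single polynomial $c(t)\in\mathrm{Ann}(\psi_{2N-1})\cap\mathrm{Ann}(\psi_{2N})$ (the intersection of two nonzero ideals of $\CA$ is nonzero) and checks directly that $u=\sum_j c_j\,\rD(N-1,j)v_\psi$ is a Whittaker vector of type $\psi$: for $i\in\{N,N+1\}$ the bracket lands in $\rd_{2N-1}$ or $\rd_{2N}$ and the annihilator condition kills it, while for $i\geq N+2$ one has $\psi_{N-1+i}=0$ automatically. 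That $\U(\fg)u$ is proper then follows from the PBW grading.
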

For convenience, we write $\rD(n,k)=\rd_{n}\otimes t^{k}$ for $n\geq-1$ and $k\in\Z$.
Thus, \eqref{Lie-bracket} can be rewritten as
\begin{align}
[\rD(n,k),\rD(m,l)]=(m-n)\rD(n+m,k+l).
\end{align}

\begin{proposition}\label{psi-reducible}
If $\psi_{2N-1},\psi_{2N}\in\mathcal{E}$, then $W(\fg_{\geq N},\psi)$ is reducible.
\end{proposition}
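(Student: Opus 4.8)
\emph{Proof idea.} The plan is to exhibit a Whittaker vector $w\notin\C v_\psi$ in $W:=W(\fg_{\geq N},\psi)$ and to prove that $v_\psi$ lies outside the submodule $\U(\fg)w$, so that $\U(\fg)w$ is a proper nonzero submodule. Since $\psi_{2N-1},\psi_{2N}\in\mathcal{E}$, Lemma~\ref{lem:charCE} makes $\mathrm{Ann}(\psi_{2N-1})$ and $\mathrm{Ann}(\psi_{2N})$ nonzero ideals of the principal ideal domain $\mathcal{A}=\C[t,t^{-1}]$, so I fix a nonzero $h=h(t)$ in their intersection, i.e.\ with $h.\psi_{2N-1}=h.\psi_{2N}=0$, and set $X:=\rd_{N-1}\otimes h\in\fg_{N-1}$ (legitimate as $N\ge 1$). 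Then $X\ne 0$, and since $X$ sits in the length-one layer of the PBW basis of $W$ coming from the decomposition $\fg=\fg_{<N}\oplus\fg_{\geq N}$ (with $\fg_{<N}:=\fg_{-1}\oplus\cdots\oplus\fg_{N-1}$), the vector $w:=Xv_\psi$ is nonzero and not proportional to $v_\psi$.

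First I verify $w\in W_\psi$. For $x\in\fg_{\geq N}$ one has $xw=[x,X]v_\psi+\psi(x)w$, so it suffices to show $[x,X]v_\psi=0$; by linearity take $x=\rD(m,l)$ with $m\ge N$, for which
\[
[\rD(m,l),X]=\bigl((N-1)-m\bigr)\,\rd_{m+N-1}\otimes\bigl(t^{l}h(t)\bigr)\ \in\ \fg_{m+N-1}\subseteq\fg_{\geq N}
\]
(since $m+N-1\ge 2N-1\ge N$), hence acts on $v_\psi$ by $\bigl((N-1)-m\bigr)\bigl(h(t).\psi_{m+N-1}\bigr)(t^{l})$. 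This is $0$ for $m\ge N+2$ because then $\psi_{m+N-1}=0$, and $0$ for $m=N,N+1$ because $h$ kills $\psi_{2N-1}$ and $\psi_{2N}$. Thus $\delta(x):=[x,X]$ is a linear map $\fg_{\geq N}\to\fg_{\geq N}$ with $\psi\circ\delta\equiv 0$, and $w$ is a Whittaker vector of type $\psi$.

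For the properness, fix a PBW basis of $\U(\fg)$ adapted to $\fg=\fg_{<N}\oplus\fg_{\geq N}$: order a basis of $\fg_{<N}$ by $\Z$-degree (so that the elements of the \emph{abelian} top piece $\fg_{N-1}$ come last among $\fg_{<N}$-elements) and place it before a basis of $\fg_{\geq N}$. Then $\U(\fg)=\U(\fg_{\geq N})\oplus\bigl(\mathbf U_{<}^{+}\cdot\U(\fg_{\geq N})\bigr)$, where $\mathbf U_{<}^{+}$ is the span of $\fg_{<N}$-PBW-monomials of length $\ge 1$; let $p\colon\U(\fg)\to\U(\fg_{\geq N})$ be the projection with this kernel, and let $\lambda\in W^{*}$ read off the coefficient of $v_\psi$ in the PBW basis, so $\lambda(uv_\psi)=\psi\bigl(p(u)\bigr)$ and $\lambda(v_\psi)=1$. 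I claim $\lambda(uw)=\psi\bigl(p(uX)\bigr)=0$ for all $u$, which forces $v_\psi\notin\U(\fg)w$ and hence $0\ne\U(\fg)w\subsetneq W$, i.e.\ $W$ reducible. It suffices to treat a PBW monomial $u=PQ$ ($P$ a $\fg_{<N}$-monomial, $Q$ a $\fg_{\geq N}$-monomial). Commuting $X$ leftward through $Q$ via $zX=Xz+\delta(z)$ gives $QX=XQ+\widetilde Q$ with $\widetilde Q\in\U(\fg_{\geq N})$ a sum of terms each carrying one $\delta$-factor, whence $\psi(\widetilde Q)=0$; so $uX=PXQ+P\widetilde Q$. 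Here $P\widetilde Q$ is already in PBW form with $\fg_{<N}$-part $P$, so $p(P\widetilde Q)$ is $0$ unless $P=1$, when it equals $\widetilde Q$ (on which $\psi$ vanishes); and, because $\fg_{N-1}$ is abelian and of top degree in $\fg_{<N}$, appending $X$'s summands at the right end of $P$ introduces no commutators, so $PX$ is again a $\fg_{<N}$-monomial of length $\ge 1$, making $PXQ$ a PBW element with nontrivial $\fg_{<N}$-part and $p(PXQ)=0$. Summing over the monomials of $u$ gives $p(uX)$ a combination of $\widetilde Q$'s, so $\psi(p(uX))=0$.

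The one place requiring care is that last step, and it is mildly counterintuitive: for $1\le a\le N-2$ the element $\rD(a,l)\,w$ does have a nonzero $v_\psi$-component whenever $h$ does not annihilate $\psi_{a+N-1}$, and one might fear this puts a scalar multiple of $v_\psi$ into $\U(\fg)w$. The resolution is that this component is cancelled by the $v_\psi$-component concealed in the ``higher'' term $X\cdot\rD(a,l)v_\psi$, which is not in PBW normal form; the functional computation above makes this automatic. Its two structural inputs --- $[\fg_{\geq N},X]\subseteq\fg_{\geq N}$ with $\psi$ vanishing there, and $\fg_{N-1}$ being abelian of top degree in $\fg_{<N}$ --- are exactly where the hypotheses $\psi_{2N-1},\psi_{2N}\in\mathcal E$ and $\psi_i=0\ (i\ge 2N+1)$ are used. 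Verifying the well-definedness of $p$ and $\lambda$ and the PBW bookkeeping is routine.
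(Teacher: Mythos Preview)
Your proof is correct and takes essentially the same approach as the paper: both construct the Whittaker vector $w=(\rd_{N-1}\otimes h)v_\psi$ for a nonzero $h\in\mathrm{Ann}(\psi_{2N-1})\cap\mathrm{Ann}(\psi_{2N})$ and argue that $\U(\fg)w$ is a proper submodule. The paper's properness step is terser---it simply records $\U(\fg)w=\U(\fg_{-1})\cdots\U(\fg_{N-1})w$ and stops (implicitly using that $\U(\fg_{<N})X\subseteq\U(\fg_{<N})_+$)---whereas you spell out the same fact via the functional $\lambda$; your closing motivational paragraph about $\rD(a,l)w$ is phrased a bit confusingly (with your chosen ordering that element has \emph{no} $v_\psi$-component, which is precisely the point), but the argument preceding it is sound.
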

\begin{proof}
From Lemma \ref{lem:charCE}, there exists a polynomial $c(t)=\sum_{j=0}^{q}c_{j}t^{j}$
with degree $q\geq1$ such that $c(t)\in\mathrm{Ann}(\psi_{2N-1})\cap\mathrm{Ann}(\psi_{2N})$.
Consider the non-zero vector \[u=\sum_{j=0}^{q}c_{j}\rD(N-1,j)v_{\psi}\in W(\fg_{\geq N},\psi).\]
We have \begin{eqnarray*}
\big(\rD(i,k)-\psi_{i}(t^{k})\big).u&=&(N-1-i)\sum_{j=0}^{q}c_{j}\psi_{N-1+i}(t^{k+j})v_{\psi}\\
&=&(N-1-i)\Big(\big(c(t).\psi_{N-1+i}\big)(t^{k})\Big)v_{\psi}=0
\end{eqnarray*}
and
\[\big(\rD(i',k)-\psi_{i'}(t^{k})\big).u=0\]
for $i=N,N+1$, $i'\geq N+2$, and $k\in\Z$. This implies that \[\U(\fg)u=\U(\fg_{-1})\U(\fg_{0})\cdots\U(\fg_{N-1})u.\]
Thus, the non-zero submodule $\U(\fg)u$ of
$W(\fg_{\geq N},\psi)$ is a proper submodule. This completes the proof.
\end{proof}

Now we introduce some notations and the lexicographical total order for later use. Set
\begin{eqnarray*}&&B(\fg_{\geq N},\psi)=\{\rD(r_{1},k_{r_{1},1})\rD(r_{1},k_{r_{1},2})\cdots
\rD(r_{1},k_{r_{1},s_{1}})\rD(r_{2},k_{r_{2},1})\cdots
\rD(r_{2},k_{r_{2},s_{2}})\\&&
\cdots\rD(r_{n},k_{r_{n},1})\rD(r_{n},k_{r_{n},2})\cdots
\rD(r_{n},k_{r_{n},s_{n}})v_{\psi}\mid n\in\N,\ k_{r_{i},j}\in\Z,\ 1\leq j\leq s_{i},
\\&&\quad 1\leq i\leq n,\ -1\leq r_{n}<\cdots<r_{1}\leq N-1,\
k_{r_{i},s_{i}}\leq\cdots\leq k_{r_{i},2}\leq k_{r_{i},1}\}.\end{eqnarray*}
By Poincar-Birkhok-Witt theorem, we know that $B(\fg_{\geq N},\psi)$ forms a basis of $W(\fg_{\geq N},\psi)$.

For convenience, we make the convention
\[g(n)g(n-1)\cdots g(m):=\prod_{i=n}^{m}g(i)\]
for any function $g$ and integers $n,m$ with $n\geq m$.

For every element \begin{align}\label{typical-vector}u=
\prod_{i=N-1}^{-1}\prod_{j=1}^{r_{i}}\rD(i,k_{i,j})^{l_{i,j}}v_{\psi}\in B(\fg_{\geq N},\psi),\end{align}
where $l_{i,j}\in\N$, $k_{i,j}\in\Z$, $-1\leq i\leq N-1$, $1\leq j\leq r_{i}$, write
\[\text{lth}_{i}(u)=\sum_{j=1}^{r_{i}}l_{i,j},\quad \text{lth}(u)=\sum_{i=-1}^{N-1}\text{lth}_{i}(u),\]
\[\mathcal{D}(u)=\big(\underbrace{N-1,N-1,\dots,N-1}_{\mathrm{lth}_{N-1}(u)-\mathrm{times}},
\dots,\underbrace{-1,-1,\dots,-1}_{\mathrm{lth}_{-1}(u)-\mathrm{times}}\big),\]
\[\mathcal{D}_{\mathrm{set}}(u)=\{i\mid -1\leq i\leq N-1,\ \text{lth}_{i}(u)\geq1\},\]
$$\begin{aligned}\mathcal{T}_{N-1}(u)=\big(\underbrace{k_{N-1,1},k_{N-1,1},\dots,k_{N-1,1}}_{l_{N-1,1}-\mathrm{times}},\dots
\underbrace{k_{N-1,r_{N-1}},k_{N-1,r_{N-1}},\dots,k_{N-1,r_{N-1}}}_{l_{N-1,r_{N-1}}-\mathrm{times}}\big),\end{aligned}$$
\[\cdots\cdots\]
$$\begin{aligned}\mathcal{T}_{-1}(u)=\big(\underbrace{k_{-1,1},k_{-1,1},\dots,k_{-1,1}}_{l_{-1,1}-\mathrm{times}},\cdots,
\underbrace{k_{-1,r_{-1}},k_{-1,r_{-1}},\dots,k_{-1,r_{-1}}}_{l_{-1,r_{-1}}-\mathrm{times}}\big),\end{aligned}$$
\[\mathcal{T}(u)=\big(\mathcal{T}_{k_{N-1}}(u),\dots,\mathcal{T}_{-1}(u)\big),\]
\[\mathcal{T}_{i,\mathrm{set}}(u)=\{k_{i,j}\mid j=1,\dots,r_{i},\ l_{i,j}\geq1\}\quad \text{for}\quad -1\leq i\leq N-1,\]
\[\mathcal{T}_{\mathrm{set}}(u)=\{k_{i,j}\mid -1\leq i\leq N-1,\ j=1,\dots,r_{i},\ l_{i,j}\geq1\}.\]

For any $r\in\Z_+$, let ``$\succ$'' be the lexicographical total order on $\Z^{r}$.
Namely,
for \[\bm{a}=(a_1,a_2,\dots,a_{r}),\ \bm{b}=(b_1,b_2,\dots,b_{r})\in\Z^{r},\]
$\bm{a}\succ\bm{b}$ if and only if there exists $j\in\Z_+$ such that
\begin{equation}\label{total-order}a_j>b_j\ \text{and}\ a_{i}=b_{i},\ 1\leq i<j\leq r.\end{equation}

Then we define the principle total order ``$\succ$'' on $B(\fg_{\geq N},\psi)$ as follows:
for different $u,v\in B(\fg_{\geq N},\psi)$, set $u\succ v$ if and only if one of the following conditions is satisfied:
     \begin{itemize}
     \item $\text{lth}(u)>\text{lth}(v)$;
     \item $\text{lth}(u)=\text{lth}(v)$ and $\mathcal{D}(u)\succ\mathcal{D}(v)$ under the total on $\Z^{\text{lth}(u)}$;
     \item $\text{lth}(u)=\text{lth}(v)$, $\mathcal{D}(u)=\mathcal{D}(v)$,
     and $\mathcal{T}(u)\succ \mathcal{T}(v)$ under the total on $\Z^{\text{lth}(u)}$.
     \end{itemize}

We have the following lemma. The proof is straightforward.
\begin{lemma}\label{direct-calculate}
Let \begin{eqnarray*}&&\mu=\rD(r_{1},k_{r_{1},1})\rD(r_{1},k_{r_{1},2})\cdots
\rD(r_{1},k_{r_{1},s_{1}})\rD(r_{2},k_{r_{2},1})\cdots
\rD(r_{2},k_{r_{2},s_{2}})\\&&
\quad\cdots\cdots\rD(r_{n},k_{r_{n},1})\rD(r_{n},k_{r_{n},2})\cdots
\rD(r_{n},k_{r_{n},s_{n}})v_{\psi}\in B(\fg_{\geq N},\psi),\end{eqnarray*}
where $n\in\N$, $k_{r_{i},j}\in\Z$, $-1\leq r_{n}<\cdots<r_{1}\leq N-1$,
$k_{r_{i},s_{i}}\leq\cdots\leq k_{r_{i},2}\leq k_{r_{i},1}$, $1\leq i\leq n$, $1\leq j\leq s_{i}$.
Then for any $m\geq N$ and $k\in\Z$, we have
\[\big(\rD(m,k)-\psi_{m}(t^{k})\big).\mu
=\prod_{i=1}^{n}\prod_{j=1}^{s_{i}}\big[\rD(m,k),\rD(r_{i},k_{r_{i},j})\big]v_{\psi}.\]
Moreover, if we write
$\big(\rD(m,k)-\psi_{m}(t^{k})\big).\mu=\sum_{l=1}^{p}a_{l}v_{l}$,
where $p\in\N$, $a_{1},\dots,a_{p}\in\C^{\times}$, and $v_{1},\dots,v_{p}\in B(\fg_{\geq N},\psi)$ are distinct elements,
then \[\mathrm{lth}(v_{l})\leq \mathrm{lth}(\mu)\]
for $l=1,\dots,p$.
\end{lemma}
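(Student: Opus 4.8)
The plan is to commute $\rD(m,k)$ to the right through the factors of $\mu$ — using that $m\ge N$ makes $\rD(m,k)$ act on $v_\psi$ by a scalar — and then to track how straightening the resulting monomials back into the basis $B(\fg_{\geq N},\psi)$ affects $\mathrm{lth}$. Write the factors of $\mu$ in order as $Y_1,\dots,Y_s$, where $s=\sum_{i=1}^n s_i=\mathrm{lth}(\mu)$, so that $\mu=Y_1\cdots Y_s v_\psi$. Since $m\ge N$ we have $\rD(m,k)\in\fg_m\subseteq\fg_{\geq N}$, hence $\rD(m,k).v_\psi=\psi_m(t^k)v_\psi$. Pushing $\rD(m,k)$ past $Y_1,\dots,Y_s$ one step at a time via $\rD(m,k)\,Y_a=Y_a\,\rD(m,k)+[\rD(m,k),Y_a]$, the term in which $\rD(m,k)$ reaches $v_\psi$ produces $\psi_m(t^k)\mu$ and cancels against $-\psi_m(t^k).\mu$; what survives is the first displayed identity, whose right-hand side is the sum of the $s$ monomials obtained from $\mu$ by replacing a single factor $\rD(r_i,k_{r_i,j})$ with $[\rD(m,k),\rD(r_i,k_{r_i,j})]=(r_i-m)\rD(m+r_i,k+k_{r_i,j})$, the brackets being computed from \eqref{Lie-bracket}.

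For the length bound, observe that in each such monomial exactly one factor, of degree $r_i\le N-1$, has been traded for a single factor of degree $m+r_i$, and $m+r_i\ge N-1$ because $m\ge N$ and $r_i\ge-1$; so the number of factors of degree $\le N-1$ has not grown — it equals $s$ or $s-1$. It therefore suffices to prove the following general fact: for any monomial $\rD(d_1,l_1)\cdots\rD(d_q,l_q)v_\psi$ with all $d_b\ge-1$, its expansion in $B(\fg_{\geq N},\psi)$ involves only basis elements whose length is at most the number of indices $b$ with $d_b\le N-1$. Granting this and applying it to the $s$-factor monomials above (any cancellation in the sum can only remove basis elements, never create them) yields $\mathrm{lth}(v_l)\le s=\mathrm{lth}(\mu)$ for every $l$.

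I would prove the general fact by the obvious straightening algorithm. Call a factor \emph{high} if its degree is $\ge N$, \emph{low} otherwise. (i) If a high factor stands immediately to the left of $v_\psi$, replace it by the scalar $\psi_{d_b}(t^{l_b})$, which vanishes once $d_b\ge 2N+1$. (ii) Otherwise, using $[\rD(d,l),\rD(d',l')]=(d'-d)\rD(d+d',l+l')$, either transpose two adjacent low factors that are out of PBW order, or — choosing the rightmost high factor — move it one place to the right past the (necessarily low, or $v_\psi$) thing following it. Each commutator so produced merges two factors into one; step (i) and the transport in step (ii) erase or shift a high factor. A short case check on degrees (the bracket adds degrees, so merging two factors yields at most one low factor, and merging two high ones yields a high one) shows the number of low factors never increases. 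Ordering monomials lexicographically by (number of factors, number of pairs in the wrong PBW/high-to-the-right order), every move strictly decreases this pair, so the algorithm terminates; it halts only when there are no high factors and the low factors are PBW-ordered, i.e. at an element of $B(\fg_{\geq N},\psi)$ (or the zero vector), whose length is its number of — now all low — factors. This gives the general fact.

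The argument is routine bookkeeping; the role of $m\ge N$ is exactly twofold — it makes $\rD(m,k).v_\psi$ a scalar multiple of $v_\psi$, so that subtracting $\psi_m(t^k)$ cancels the leftover term without creating a new factor there, and it forces $m+r_i\ge N-1$, so that each commutator's new factor replaces, rather than adds to, the low content of the monomial. The only point that needs genuine care is verifying that the number of low factors is monotone under every straightening move, and that the chosen complexity measure strictly decreases at each move, so that the termination (hence the implicit induction behind the general fact) is legitimate.
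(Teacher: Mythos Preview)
Your proof is correct and is precisely the natural elaboration of what the paper has in mind; the paper itself omits all details, stating only that ``the proof is straightforward.'' The key invariant you isolate---that the number of factors of degree $\le N-1$ never increases under straightening (since any commutator $[\rD(d,l),\rD(d',l')]$ with $d\ge N$, $d'\ge -1$ lands in degree $\ge N-1$, and two high factors bracket to a high factor)---is exactly the point, and your use of $m+r_i\ge N-1$ to control the initial replacement is the right observation; the termination of the straightening itself is standard PBW and needs no bespoke complexity measure.
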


We need the following fact
\begin{proposition}\label{ele-lemma}
Let $u\in B(\fg_{\geq N},\psi)$ be as in \eqref{typical-vector}. For $k\in\Z$, write
\[\big(\rD(N,k)-\psi_{N}(t^{k})\big).u=\sum_{n=1}^{q}b_{n}v_{n},\]
where $q\in\N$, $b_{1},\dots,b_{q}\in\C^{\times}$,
and $v_{1},\dots,v_{q}\in B(\fg_{\geq N},\psi)$ are distinct elements.\\
(1) If $\mathrm{lth}_{-1}(u)=0$, then
\[\mathrm{lth}(v_{n})<\mathrm{lth}(u),\quad\mathcal{D}_{\mathrm{set}}(v_{n})\subseteq\mathcal{D}_{\mathrm{set}}(u),
\quad\text{and}\quad\mathcal{T}_{\mathrm{set}}(v_{n})\subseteq\mathcal{T}_{\mathrm{set}}(u)\]
for all $n=1,\dots,q$.\\
(2) If $\mathrm{lth}_{-1}(u)>0$, then $\big(\rD(N,k)-\psi_{N}(t^{k})\big).u\neq0$ for any $k\in\Z$.
Moreover, up to a permutation, we have
\begin{itemize}
\item[(i)] For $1\leq n\leq r_{-1}$, we have \[b_{n}=-l_{-1,n}(N+1)\ \text{and}\  v_{n}=\rD(N-1,k+k_{-1,n})\prod_{i=N-1}^{-1}\prod_{j=1}^{r_{i}}\rD(i,k_{i,j})^{l_{i,j}-\delta_{-1,i}\delta_{j,n}}v_{\psi},\]
\item[(ii)] For $r_{-1}+1\leq n\leq q$, we have $\mathrm{lth}(v_{n})<\mathrm{lth}(u)$,
\item[(iii)] If there exists some $v_{n}$ $(r_{-1}+1\leq n\leq q)$
such that $\mathrm{lth}_{-1}(v_{n})=\mathrm{lth}_{-1}(u)$, then
$\mathcal{T}_{\mathrm{set}}(v_{n})\subseteq\mathcal{T}_{\mathrm{set}}(u)$.
\end{itemize}
\end{proposition}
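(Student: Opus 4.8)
The plan is to compute $\bigl(\rD(N,k)-\psi_N(t^k)\bigr).u$ explicitly using Lemma~\ref{direct-calculate}, and then analyze the resulting terms according to the three statistics $\mathrm{lth}$, $\mathcal{D}_{\mathrm{set}}$, $\mathcal{T}_{\mathrm{set}}$. By Lemma~\ref{direct-calculate}, the action of $\rD(N,k)-\psi_N(t^k)$ on the PBW monomial $u$ (written as an ordered product $\prod_{i=N-1}^{-1}\prod_{j=1}^{r_i}\rD(i,k_{i,j})^{l_{i,j}}$ applied to $v_\psi$) equals the sum, over all positions in this ordered product, of the monomial with one factor $\rD(i,k_{i,j})$ replaced by the bracket $[\rD(N,k),\rD(i,k_{i,j})] = (i-N)\rD(N+i,k+k_{i,j})$. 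Each such replacement increases the first index from $i$ to $N+i$. If $i\geq 0$ then $N+i\geq N$, so that factor moves to the $\fg_{\geq N}$ part and, after commuting it past the remaining factors and letting it act on $v_\psi$, yields either a scalar multiple of $v_\psi$ (hence a shorter monomial) or further terms of strictly smaller length; only when $i=-1$ does the new factor $\rD(N-1,k+k_{-1,j})$ stay within the "non-positive" part $\fg_{\leq N-1}$ that indexes our basis, and crucially it does so without decreasing $\mathrm{lth}$.

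For part (1), when $\mathrm{lth}_{-1}(u)=0$ there is no factor with $i=-1$, so every replacement produces a factor $\rD(N+i,k+k_{i,j})$ with $N+i\geq N+0=N$; pushing this to the right past the other factors of $u$ and absorbing it into $v_\psi$ (using $\rD(m,\cdot)v_\psi=\psi_m(\cdot)v_\psi$ for $m\geq N$ and commutators that only produce factors of index $\geq N$ or shorten the monomial) strictly decreases $\mathrm{lth}$. The inclusions $\mathcal{D}_{\mathrm{set}}(v_n)\subseteq\mathcal{D}_{\mathrm{set}}(u)$ and $\mathcal{T}_{\mathrm{set}}(v_n)\subseteq\mathcal{T}_{\mathrm{set}}(u)$ follow because removing a factor can only delete indices/exponents from these sets, never create new ones — one only needs to check that reordering into PBW form (using the bracket relation, which produces $\rD(i+i',k+k')$ from $\rD(i,k),\rD(i',k')$) does not introduce a new pair $(i,k)$ with both $i\in\{-1,\dots,N-1\}$ surviving and a fresh $k\notin\mathcal{T}_{\mathrm{set}}(u)$; since all surviving "low" factors of $v_n$ are literally among the original factors of $u$, this is immediate.

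For part (2), assume $\mathrm{lth}_{-1}(u)=r:=\mathrm{lth}_{-1}(u)>0$. The terms where the replaced factor has $i=-1$ give, for each $j=1,\dots,r_{-1}$, a contribution $l_{-1,j}\cdot(i-N)\rD(N-1,k+k_{-1,j})\cdot(\text{rest})v_\psi = -l_{-1,j}(N+1)\,\rD(N-1,k+k_{-1,j})\prod\rD(i',k_{i',j'})^{l_{i',j'}-\delta_{-1,i'}\delta_{j',j}}v_\psi$ — note $i-N=-1-N=-(N+1)$ — which after reordering is already (up to PBW-ordering the $\rD(N-1,\cdot)$ factors, which is harmless since they commute) the claimed $v_n$ in (i). These terms all have $\mathrm{lth}=\mathrm{lth}(u)$ and $\mathrm{lth}_{-1}=r-1$, while $\mathrm{lth}_{N-1}$ increased by one. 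The remaining terms (replaced factor with $i\geq 0$) have $\mathrm{lth}<\mathrm{lth}(u)$, giving (ii). For non-vanishing: the terms in (i) are the only ones with $\mathrm{lth}_{-1}=r-1$ and $\mathcal{D}$-vector having exactly $\mathrm{lth}_{N-1}(u)+1$ entries equal to $N-1$; distinct $j$ may collide only if $k+k_{-1,j}$ coincide, but then the coefficients $-l_{-1,j}(N+1)$ add with the same sign and cannot cancel, so the top-$\mathcal{D}$ part is genuinely nonzero, hence $\bigl(\rD(N,k)-\psi_N(t^k)\bigr).u\neq 0$. Claim (iii) is the same bookkeeping as in part (1): any lower-length term $v_n$ with $\mathrm{lth}_{-1}(v_n)=\mathrm{lth}_{-1}(u)$ must have used a replacement with $i\geq 0$, and its "low" factors are a sub-multiset of those of $u$ (reordering produces no new $k$'s among surviving low factors), so $\mathcal{T}_{\mathrm{set}}(v_n)\subseteq\mathcal{T}_{\mathrm{set}}(u)$.

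The main obstacle is the careful PBW-reordering argument: when the new factor $\rD(N+i,k+k_{i,j})$ with $i\geq 0$ is commuted rightward past the remaining $\fg_{\leq N-1}$-factors and absorbed into $v_\psi$, each commutator $[\rD(N+i,\cdot),\rD(i',\cdot)]$ with $N+i\geq N$, $i'\leq N-1$ can produce a factor $\rD(N+i+i',\cdot)$ whose index $N+i+i'$ may fall back into $\{-1,\dots,N-1\}$ (when $i'<0$, i.e. $i'=-1$), so one must verify by induction on $\mathrm{lth}$ (or on the number of low factors) that all such cascade terms still have strictly smaller $\mathrm{lth}$ than $u$ and cannot manufacture a new element of $\mathcal{T}_{\mathrm{set}}$ outside $\mathcal{T}_{\mathrm{set}}(u)$ — since every index arising is a sum $k$ plus a sub-sum of original $k_{i,j}$'s, this is false in general for $\mathcal{T}_{\mathrm{set}}$, which is exactly why (iii) is stated only for the terms with $\mathrm{lth}_{-1}(v_n)=\mathrm{lth}_{-1}(u)$: such a term can have lost no "low" factor to the cascade, pinning down its surviving low indices to be literally among those of $u$. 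Making this last point precise — that maximality of $\mathrm{lth}_{-1}$ forces the surviving low part to be untouched — is the delicate step, and I would isolate it as the crux of the proof.
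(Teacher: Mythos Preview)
Your proposal is correct and follows essentially the same approach as the paper: invoke Lemma~\ref{direct-calculate} and track what happens to each commutator term, with the key dichotomy being whether the replaced factor has index $-1$ (producing a surviving $\rD(N-1,\cdot)$ factor of full length) or index $\geq 0$ (producing a $\fg_{\geq N}$-factor that is absorbed, shortening the monomial). The only packaging difference is in (2)(iii): the paper states the claim for general $\rD(m,k)$ with $m\geq N$ and proves it by induction on $\mathrm{lth}(u)$, whereas you argue directly via the cascade; your observation that any drop of the cascading index below $N$ costs a $(-1)$-factor is precisely what makes the paper's induction close, so the two arguments are the same in substance.
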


\begin{proof} (1) Note that $\mathrm{lth}_{-1}(u)=0$, we obtain (1) by Lemma \ref{direct-calculate}.

(2) By Lemma \ref{direct-calculate}, (i) and (ii) of $(2)$ follows from a direct computation.
For (iii) of $(2)$, it is sufficient to prove the following claim.

\vspace{2mm}

\noindent {\bf Claim}. For any $m\geq N$, $k\in\Z$,
and $u\in B(\fg_{\geq N},\psi)$ be as in \eqref{typical-vector}, write
\[\big(\rD(m,k)-\psi_{m}(t^{k})\big).u=\sum_{n=1}^{p}a_{n}w_{n},\]
where $p\in\N$, $a_{1},a_{2},\dots,a_{p}\in\C^{\times}$,
$w_{1},w_{2},\dots,w_{p}\in B(\fg_{\geq N},\psi)$ are distinct elements.
If $\mathrm{lth}_{-1}(w_{n})=\mathrm{lth}_{-1}(u)$ for some $1\leq n\leq p$,
then $\mathcal{T}_{\mathrm{set}}(w_{n})\subseteq\mathcal{T}_{\mathrm{set}}(u)$.

\vspace{2mm}

It is straightforward to
prove the claim by induction on $\mathrm{lth}(u)$.
\end{proof}

We also have the following property of Whittaker vectors in $W(\fg_{\geq N},\psi)$.
\begin{proposition}\label{some-property-Whit}
Let $v=\sum_{i=1}^{p}a_{i}v_{i}\in W(\fg_{\geq N},\psi)_{\psi}\setminus \C v_{\psi}$,
where $a_{1},a_{2},\dots,a_{p}\in\C^{\times}$ and $v_{1},v_{2},\dots,v_{p}\in B(\fg_{\geq N},\psi)$ are distinct elements.
Then \[\mathrm{lth}_{-1}(v_{i})=0\quad \text{for all}\quad i=1,\dots,p.\]
\end{proposition}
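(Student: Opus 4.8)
The plan is to argue by contradiction, via a leading-term analysis with respect to the principal total order ``$\succ$''. Suppose $\mathrm{lth}_{-1}(v_{i})>0$ for some $i$, and put $I=\{\,i\mid \mathrm{lth}_{-1}(v_i)>0\,\}$, a nonempty finite set. Let $v^{*}=v_{i^{*}}$ be the ``$\succ$''-maximal element of $\{v_i\mid i\in I\}$, fix any degree-$(-1)$ index $k_0$ occurring in $v^{*}$, let $l_0\geq 1$ be its multiplicity, and set $L^{*}=\mathrm{lth}(v^{*})$. Because $v$ is a Whittaker vector of type $\psi$ and $\rD(N,k)\in\fg_{\geq N}$ with $\psi(\rD(N,k))=\psi_{N}(t^{k})$, we have
\[\big(\rD(N,k)-\psi_{N}(t^{k})\big).v=\sum_{j=1}^{p}a_{j}\big(\rD(N,k)-\psi_{N}(t^{k})\big).v_{j}=0\qquad\text{for all }k\in\Z ,\]
and the idea is to pin down a single basis vector whose coefficient in this sum is nonzero for all large $k$.

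By Proposition \ref{ele-lemma}(2)(i), $(\rD(N,k)-\psi_{N}(t^{k})).v^{*}$ contains, with coefficient exactly $-l_0(N+1)\neq 0$, the basis vector $w_{k}$ obtained from $v^{*}$ by deleting one factor $\rD(-1,k_0)$ and inserting one factor $\rD(N-1,k+k_0)$; note that $\mathrm{lth}(w_{k})=L^{*}$, and that for large $k$ this $w_{k}$ carries a degree-$(N-1)$ factor $\rD(N-1,k+k_0)$ of arbitrarily large index. I will show that, for $k$ large, the coefficient of $w_{k}$ in $\sum_{j}a_{j}(\rD(N,k)-\psi_{N}(t^{k})).v_{j}$ is $a_{i^{*}}\cdot(-l_0(N+1))\neq 0$, contradicting the vanishing above and hence forcing $I=\varnothing$, which is the assertion.

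The coefficient is computed by examining each $v_{j}$ according to $\mathrm{lth}(v_{j})$ against $L^{*}$. If $\mathrm{lth}(v_{j})<L^{*}$, then by Lemma \ref{direct-calculate} every basis term of $(\rD(N,k)-\psi_{N}(t^{k})).v_{j}$ has length $\leq\mathrm{lth}(v_{j})<L^{*}$, hence is not $w_{k}$. If $\mathrm{lth}(v_{j})=L^{*}$ and $\mathrm{lth}_{-1}(v_{j})=0$, then by Proposition \ref{ele-lemma}(1) every such term has length $<L^{*}$, again not $w_{k}$. If $\mathrm{lth}(v_{j})=L^{*}$ and $\mathrm{lth}_{-1}(v_{j})>0$, then by Proposition \ref{ele-lemma}(2) the only length-$L^{*}$ terms are those of type (i), each produced by turning one degree-$(-1)$ factor into a degree-$(N-1)$ factor with index shifted by $k$; equating the multiset of degree-$(N-1)$ indices of such a term with that of $w_{k}$ and taking $k$ large enough that the shifted index exceeds all the finitely many fixed degree-$(N-1)$ indices occurring in $v_{j}$ and in $v^{*}$ forces the two shift amounts to coincide, and then, since the operator $\rD(N,k)-\psi_{N}(t^{k})$ leaves untouched the factors of each degree $0,\dots,N-2$ and merely deletes and reinstates one degree-$(-1)$ index, the matching forces $v_{j}=v^{*}$. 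Finally, if $\mathrm{lth}(v_{j})>L^{*}$, then $\mathrm{lth}_{-1}(v_{j})=0$ by the ``$\succ$''-maximality of $v^{*}$, so by Proposition \ref{ele-lemma}(1) every index occurring in $(\rD(N,k)-\psi_{N}(t^{k})).v_{j}$ lies in the fixed finite set $\mathcal{T}_{\mathrm{set}}(v_{j})$; choosing $k$ so large that $k+k_0\notin\mathcal{T}_{\mathrm{set}}(v_{j})$, no resulting term carries the factor $\rD(N-1,k+k_0)$, hence none is $w_{k}$. As there are only finitely many $v_{j}$, one large $k$ serves for all of them, and the claimed coefficient follows.

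The main obstacle is the third case: excluding cancellation of $w_{k}$ against type-(i) contributions from \emph{other} basis vectors of the same total length $L^{*}$. This is exactly where the ``$\succ$''-maximality of $v^{*}$ and the freedom to send $k\to\infty$ are needed — pushing the degree-$(N-1)$ index $k+k_0$ past the range of all the fixed indices separates the leading term, after which the explicit description of the type-(i) terms in Proposition \ref{ele-lemma}(2)(i) pins down every remaining factor and identifies the competing $v_{j}$ with $v^{*}$. All the other cases reduce to the crude length bound of Lemma \ref{direct-calculate} together with the support statement of Proposition \ref{ele-lemma}(1).
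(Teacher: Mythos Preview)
Your proof is correct and follows essentially the same approach as the paper: both argue by contradiction using Proposition~\ref{ele-lemma} to isolate, for sufficiently large $k$, a type-(i) basis term carrying the new degree-$(N-1)$ index $k+k_0$ that no other contribution can cancel. The only difference is organizational---the paper splits $\{v_i\}$ into the $I$-part and its complement and argues via $\succ$-maximality of the resulting term $w_0$, whereas you split by $\mathrm{lth}(v_j)$ relative to $L^*$ and track the coefficient of a single target $w_k$ directly---but the underlying mechanism is identical.
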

\begin{proof}
Suppose to the contrary, then we have \[I:=\{1\leq i\leq p\mid\mathrm{lth}_{-1}(v_{i})\geq1\}\neq\emptyset.\]
Without loss of generality, we assume that
\[I=\{1,2,\dots,i_{0}\}\]
for some $1\leq i_{0}\leq n$ and $v_{1}\succ\cdots\succ v_{i_{0}}$.

\vspace{2mm}

\noindent {\bf Claim}. $\big(\rD(N,k)-\psi_{N}(t^{k})\big).v\neq0$ for all sufficiently large integers $k$.

In fact, from Proposition \ref{ele-lemma}(1), we only need to note that
if we write \[\big(\rD(N,k)-\psi_{N}(t^{k})\big).\sum_{j=i_{0}+1}^{p}a_{j}v_{j}=
\sum_{m=1}^{q}b_{m}u_{m},\quad k\in\Z,\]
where $q\in\N$, $b_{1},\dots,b_{q}\in\C^{\times}$,
and $u_{1},\dots,u_{q}\in B(\fg_{\geq N},\psi)$ are distinct elements,
then we have
\[\mathcal{T}_{\mathrm{set}}(u_{m})\subseteq\bigcup_{j=i_{0}+1}^{p}\mathcal{T}_{\mathrm{set}}(v_{j})
\quad \text{for all}\quad m=1,\dots,q.\]
Let $v_{1}$ be as in \eqref{typical-vector} with $\mathrm{lth}_{-1}(v_{1})\geq1$.
Now for sufficiently large integer $k$, by Proposition \ref{ele-lemma}(2), we have
\[\big(\rD(N,k)-\psi_{N}(t^{k})\big).\sum_{j=1}^{i_{0}}a_{j}v_{j}=c_{0}w_{0}+\sum_{m=1}^{q'}c_{m}w_{m},\]
where $c_{0}:=-a_{1}l_{-1,1}(N+1),c_{1},\dots,c_{q'}\in\C^{\times}$ and
\[w_{0}:=\rD(N-1,k+k_{-1,1})\prod_{i=N-1}^{-1}\prod_{j=1}^{r_{i}}\rD(i,k_{i,j})^{l_{i,j}-\delta_{-1,i}\delta_{1,j}}v_{\psi},
w_{1},\dots,w_{p'}\in B(\fg_{\geq N},\psi)\] such that $w_{0}\succ w_{m}$ for $m=1,\dots,p'$.
Note that for sufficiently large integer $k$, we have
$k+k_{-1,r_{-1}}\notin \bigcup_{j=i_{0}+1}^{p}\mathcal{T}_{\mathrm{set}}(v_{j})$.
This proves the claim, which contradicts the assumption that $v\in W(\fg_{\geq N},\psi)_{\psi}$. We complete the lemma.
\end{proof}

In the rest of this section, we assume that either $\psi_{2N-1}\notin\mathcal{E}$ or $\psi_{2N}\notin\mathcal{E}$.

\begin{proposition}\label{EXP-2N}
If $\psi_{2N}\notin\mathcal{E}$, then $W(\fg_{\geq N},\psi)_{\psi}=\C v_{\psi}$.
\end{proposition}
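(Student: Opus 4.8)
The plan is to show that $W(\fg_{\geq N},\psi)$ has no Whittaker vectors outside $\C v_\psi$, by using Proposition~\ref{some-property-Whit} together with the elements $\rD(2N-\rho,k)\in\fg_{\geq N}$, $\rho=0,1,\dots,N-1$, to strip off the creation operators of a putative extra Whittaker vector one degree at a time. Suppose for contradiction that $v\in W(\fg_{\geq N},\psi)_\psi\setminus\C v_\psi$, and write $v=\sum_{i=1}^p a_i v_i$ with $a_i\in\C^\times$ and pairwise distinct $v_i\in B(\fg_{\geq N},\psi)$ in the form \eqref{typical-vector}; since $v\notin\C v_\psi$, at least one $v_i\ne v_\psi$. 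By Proposition~\ref{some-property-Whit}, $\mathrm{lth}_{-1}(v_i)=0$ for all $i$. I will prove by induction on $\rho$ that $\mathrm{lth}_\ell(v_i)=0$ for all $i$ and all $-1\le\ell\le\rho$; taking $\rho=N-1$ then forces every $v_i=v_\psi$, contradicting the previous sentence, so that indeed $W(\fg_{\geq N},\psi)_\psi=\C v_\psi$ (the reverse inclusion being clear).

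For the inductive step, assume $\mathrm{lth}_\ell(v_i)=0$ for all $i$ and all $\ell<\rho$, so every factor of every $v_i$ has degree $\ge\rho$. Apply $\rD(2N-\rho,k)-\psi_{2N-\rho}(t^k)$ to $v$ (note $N+1\le 2N-\rho\le 2N$, so $\rD(2N-\rho,k)\in\fg_{\geq N}$); as $v$ is a Whittaker vector this annihilates $v$ for every $k\in\Z$. By Lemma~\ref{direct-calculate} and the bracket relation, its effect on a single $v_i$ is a Leibniz-type sum in which a factor $\rD(i',l)$ of $v_i$ is replaced by $(i'-(2N-\rho))\rD(2N-\rho+i',k+l)$. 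Three facts drive the computation: any operator of degree $\ge 2N+1$ annihilates $v_\psi$; commuting such an operator past a factor of degree $\ge 0$ only increases its degree; and $\psi_s=0$ for $s\ge 2N+1$. Hence the slots with $i'>\rho$ contribute nothing, while each slot with $i'=\rho$ produces the degree-$2N$ operator $\rD(2N,k+l)$, which commutes to $v_\psi$ past the degree-$\rho$ factors standing to its right and acts there by the scalar $\psi_{2N}(t^{k+l})$. Consequently, if $v_i$ has no degree-$\rho$ factor then $(\rD(2N-\rho,k)-\psi_{2N-\rho}(t^k)).v_i=0$, and if it does, then
\[
\big(\rD(2N-\rho,k)-\psi_{2N-\rho}(t^k)\big).v_i=-2(N-\rho)\sum_{e}\mu_i(e)\,\psi_{2N}(t^{k+e})\,v_i^{[e]}+(\text{terms of length}<\mathrm{lth}(v_i)-1),
\]
where $e$ runs over the degree-$\rho$ exponents occurring in $v_i$, $\mu_i(e)$ is the multiplicity of $\rD(\rho,e)$ in $v_i$, $v_i^{[e]}\in B(\fg_{\geq N},\psi)$ is obtained from $v_i$ by deleting one copy of $\rD(\rho,e)$ (and has length $\mathrm{lth}(v_i)-1$), and the trailing terms — present only when $\rho=0$ — still involve $\psi_{2N}$ only.

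Set $\mathcal I=\{i:\mathrm{lth}_\rho(v_i)\ge1\}$ and suppose $\mathcal I\ne\emptyset$. Let $L=\max_{i\in\mathcal I}\mathrm{lth}(v_i)$ and $\mathcal I'=\{i\in\mathcal I:\mathrm{lth}(v_i)=L\}$. Taking the length-$(L-1)$ component of the identity $\sum_{i}a_i(\rD(2N-\rho,k)-\psi_{2N-\rho}(t^k)).v_i=0$ — to which only the $i\in\mathcal I'$ contribute, and only through the displayed sums — and using that $B(\fg_{\geq N},\psi)$ is a basis, we obtain, for each basis element $u$ of length $L-1$,
\[
\big(g_u(t).\psi_{2N}\big)(t^k)=0\quad(\forall\,k\in\Z),\qquad g_u(t):=\sum_{\substack{i\in\mathcal I',\ e:\\ v_i^{[e]}=u}}a_i\,\mu_i(e)\,t^{e}.
\]
Now pick $i_0\in\mathcal I'$ and a degree-$\rho$ exponent $e_0$ of $v_{i_0}$, and set $u_0=v_{i_0}^{[e_0]}$. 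The pairs $(i,e)$ entering $g_{u_0}$ are precisely those for which $v_i$ is obtained from $u_0$ by inserting one factor $\rD(\rho,e)$; hence they carry pairwise distinct exponents $e$, so the $t^{e_0}$-coefficient of $g_{u_0}$ equals $a_{i_0}\mu_{i_0}(e_0)\ne0$ and $g_{u_0}(t)\ne0$. Thus $g_{u_0}(t)$ is a nonzero element of $\mathrm{Ann}(\psi_{2N})$, whence $\psi_{2N}\in\mathcal E$ by Lemma~\ref{lem:charCE}, contradicting the hypothesis. Therefore $\mathcal I=\emptyset$, i.e. $\mathrm{lth}_\rho(v_i)=0$ for all $i$, completing the induction. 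The step I expect to require the most care is the displayed reduction in the second paragraph: one must verify that, granted the inductive hypothesis, applying $\rD(2N-\rho,k)$ produces only $\psi_{2N}$ and never an intermediate functional $\psi_s$ with $N\le s<2N$ — which is exactly what the vanishing $\psi_s=0$ for $s\ge 2N+1$, together with the degree bookkeeping, secures, and is what converts the hypothesis $\psi_{2N}\notin\mathcal E$ into the contradiction above.
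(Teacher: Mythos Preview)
Your argument is correct and follows essentially the same strategy as the paper's: reduce to $\mathrm{lth}_{-1}=0$ via Proposition~\ref{some-property-Whit}, apply $\rD(2N-\ell,k)$ for the minimal relevant degree $\ell$, observe that only $\psi_{2N}$ survives in the resulting expression, and extract a nonzero element of $\mathrm{Ann}(\psi_{2N})$. The only differences are organizational --- the paper works directly at $\ell=\min\bigcup_i\mathcal{D}_{\mathrm{set}}(v_i)$ rather than wrapping the step in an induction on $\rho$, and isolates the key coefficient via the total order $\succ$ on $B(\fg_{\geq N},\psi)$ rather than via the length filtration --- but the mechanism is identical.
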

\begin{proof}
Suppose to the contrary that $W(\fg_{\geq N},\psi)_{\psi}\neq\C v_{\psi}$.
Then from Proposition \ref{some-property-Whit}, we know that
there exists some \[v=\sum_{i=1}^{p}a_{i}v_{i}\in W(\fg_{\geq N},\psi)_{\psi}\setminus \C v_{\psi},\]
where $a_{1},a_{2},\dots,a_{p}\in\C^{\times}$ and
$v_{1},v_{2},\dots,v_{p}\in B(\fg_{\geq N},\psi)\setminus\{v_{\psi}\}$ are distinct elements
such that $\mathrm{lth}_{-1}(v_{i})=0$ for $i=1,\dots,p$.

Set \[\ell=\min\,\bigcup_{i=1}^{p}\mathcal{D}_{\mathrm{set}}(v_{i}).\]
Recall $\psi_{i}=0$ for all $i\geq2N+1$ from \eqref{psi-n}. By Lemma \ref{direct-calculate}, we know that
\[\big(\rD(2N-\ell,k)-\psi_{2N-\ell}(t^{k})\big).\omega=0\]
for all $k\in\Z$ and $\omega\in B(\fg_{\geq N},\psi)$ such that $\min\mathcal{D}_{\mathrm{set}}(\omega)>\ell$.

Without loss of generality, we assume that $\min\mathcal{D}_{\mathrm{set}}(v_{i})=\ell$
for $1\leq i\leq p_{0}$ and $\min\,\mathcal{D}_{\mathrm{set}}(v_{i})>\ell$ for $p_{0}+1\leq i\leq p$,
where $1\leq p_{0}\leq p$. We may further assume that $v_{1}\succ\cdots\succ v_{p_{0}}$.
For convenience, we write \[v_{1}=x_{N-1}\cdots x_{\ell+1}\rD(\ell,k_{1})^{l_{1}}\cdots\rD(\ell,k_{s})^{l_{s}}v_{\psi},\]
where $x_{N-1}\in\U(\fg_{N-1}),\dots,x_{\ell+1}\in\U(\fg_{\ell+1})$ are nonzero elements, $k_{1}>\cdots>k_{s}$,
and $l_{1},\dots,l_{s}\in\Z_{+}$. It is obvious that there exists $1\leq q\leq p_{0}$ such that
\[v_{i}=x_{N-1}\cdots x_{\ell+1}\rD(\ell,k_{1})^{l_{1}}\cdots\rD(\ell,k_{s})^{l_{s}-1}\rD(\ell,k_{s_{i}})v_{\psi},\]
where $i=2,\dots,q$, $k_{s}>k_{s_{2}}>k_{s_{3}}>\cdots>k_{s_{q}}$, and $v_{q+1}\prec \xi$
for any $\xi\in\{x_{N-1}\cdots x_{\ell+1}\rD(\ell,k_{1})^{l_{1}}\cdots
\rD(\ell,k_{s})^{l_{s}-1}\rD(\ell,k)v_{\psi}\mid k\in\Z,\ k<k_{s}\}$.
\vspace{3mm}

For $k\in\Z$, we have
$\big(\rD(2N-\ell,k)-\psi_{2N-\ell}(t^{k})\big).v_{1}=b_{0}\omega_{0}+\sum_{m=1}^{p'}b_{m}\omega_{m}$,
where $b_{0}:=l_{s}(2\ell-2N)\psi_{2N}(t^{k+k_{s}}),\ b_{1},\dots,b_{m}\in\C$ and
\[\omega_{0}:=x_{N-1}\cdots x_{\ell+1}\rD(\ell,k_{1})^{l_{1}}\cdots
\rD(\ell,k_{s})^{l_{s}-1}v_{\psi},\ \omega_{1},\dots,\omega_{m}\in B(\fg_{\geq N},\psi)\]
such that $\omega_{0}\succ\omega_{m}$ for $m=1,\dots,p'$. Similarly, we can deduce that
\begin{align}\label{Imp-1}\big(\rD(2N-\ell,k)-\psi_{2N-\ell}(t^{k})\big).v
=c_{0}\omega_{0}+\sum_{m=1}^{q'}c_{m}\omega_{m}',\end{align}
where $c_{0}:=a_{1}l_{s}(2\ell-2N)\psi_{2N}(t^{k+k_{s}})+
\sum_{n=2}^{q}a_{n}(2\ell-2N)\psi_{2N}(t^{k+k_{s_{n}}}),\ c_{1},\dots,c_{q'}\in\C$
and $\omega_{0},\omega_{1}',\dots,\omega_{q'}'\in B(\fg_{\geq N},\psi)$ such that $\omega_{0}\succ\omega_{m}'$
for $m=1,\dots,q'$. This forces $c_{0}=0$ for all $k\in\Z$, which means that
\[\Big(\big(a_{1}l_{s}(2\ell-2N)t^{k_{s}}+\sum_{n=2}^{q}a_{n}(2\ell-2N)t^{k_{s_{n}}}\big).\psi_{2N}\Big)(t^{k})=0\]
for all $k\in\Z$. That is $\big(a_{1}l_{s}(2\ell-2N)t^{k_{s}}+\sum_{n=2}^{q}a_{n}(2\ell-2N)t^{k_{s_{n}}}\big).\psi_{2N}=0$.
Since $a_{1}l_{s}(2\ell-2N)t^{k_{s}}+\sum_{n=2}^{q}a_{n}(2\ell-2N)t^{k_{s_{n}}}\in\C[t,t^{-1}]$ is non-zero,
it follows from Lemma \ref{lem:charCE} that $\psi_{2N}\in\mathcal{E}$, which is a contradiction.
This completes the proof.
\end{proof}

Now we remain to prove the case that $\psi_{2N}\in\mathcal{E}$ and $\psi_{2N-1}\notin\mathcal{E}$.
\begin{proposition}\label{EXP-2N-1}
If $\psi_{2N}\in\mathcal{E}$ and $\psi_{2N-1}\notin\mathcal{E}$, then $W(\fg_{\geq N},\psi)_{\psi}=\C v_{\psi}$.
\end{proposition}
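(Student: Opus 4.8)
The plan is to argue by contradiction, closely following the strategy of Proposition \ref{EXP-2N} but replacing the operator $\rD(2N-\ell,k)$ used there by a suitable linear combination of the operators $\rD(2N-1-\ell,k+j)$, $j\ge 0$, designed to detect $\psi_{2N-1}$ rather than $\psi_{2N}$. Suppose $W(\fg_{\geq N},\psi)_{\psi}\neq\C v_{\psi}$. By Proposition \ref{some-property-Whit} we may fix $v=\sum_{i=1}^{p}a_{i}v_{i}\in W(\fg_{\geq N},\psi)_{\psi}\setminus\C v_{\psi}$ with $a_{i}\in\C^{\times}$ and distinct $v_{i}\in B(\fg_{\geq N},\psi)\setminus\{v_{\psi}\}$ such that $\mathrm{lth}_{-1}(v_{i})=0$ for all $i$. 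Put $\ell=\min\bigcup_{i=1}^{p}\mathcal{D}_{\mathrm{set}}(v_{i})$, so that $0\le\ell\le N-1$, and let $v_{(1)}$ be the $\succ$-largest among those $v_{i}$ with $\min\mathcal{D}_{\mathrm{set}}(v_{i})=\ell$. Since $\psi_{2N}\in\mathcal{E}$, Lemma \ref{lem:charCE} provides a nonzero element $c(t)=\sum_{j=0}^{q}c_{j}t^{j}\in\mathrm{Ann}(\psi_{2N})$.

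Set $T_{k}:=\sum_{j=0}^{q}c_{j}\big(\rD(2N-1-\ell,k+j)-\psi_{2N-1-\ell}(t^{k+j})\big)$ for $k\in\Z$; since $2N-1-\ell\ge N$ and $v$ is a Whittaker vector of type $\psi$, we have $T_{k}v=0$. I would next evaluate $T_{k}$ on the basis $B(\fg_{\geq N},\psi)$ by means of Lemma \ref{direct-calculate}: acting on a monomial $u$ as in \eqref{typical-vector}, the operator $\rD(2N-1-\ell,k+j)-\psi_{2N-1-\ell}(t^{k+j})$ yields a sum over nonempty ``chains'' of factors of $u$ that get successively contracted with it, and because $\psi_{i}=0$ for $i\ge 2N+1$ the only chains that contribute are those whose contracted block indices sum to $\ell$ or to $\ell+1$. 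A chain with block-sum $\ell+1$ contributes a term whose coefficient is a value of $\psi_{2N}$, hence is annihilated after applying $\sum_{j=0}^{q}c_{j}$ because $c(t)\in\mathrm{Ann}(\psi_{2N})$; a chain with block-sum $\ell$ must consist of a single factor $\rD(\ell,k')$ of $u$ (or, when $\ell=0$, of several block-$0$ factors), and contributes a term of the form $(2\ell+1-2N)\,(\text{multiplicity})\,\big(c(t).\psi_{2N-1}\big)(t^{k+k'})\cdot\big(u\setminus\rD(\ell,k')\big)$. In particular $T_{k}u=0$ whenever $\min\mathcal{D}_{\mathrm{set}}(u)>\ell$, and for the remaining monomials $T_{k}u$ is supported on monomials obtained from $u$ by deleting one (or, for $\ell=0$, a few) block-$\ell$ factor(s). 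Consequently the $\succ$-largest monomial occurring in $T_{k}v$ is $\omega_{0}$, obtained from $v_{(1)}$ by deleting its block-$\ell$ factor of smallest $t$-exponent, and exactly as in the proof of Proposition \ref{EXP-2N} the only basis vectors $v_{i}$ that can reach $\omega_{0}$ are $v_{(1)}$ itself together with those $v_{i}$ obtained from $\omega_{0}$ by re-inserting a single block-$\ell$ factor in PBW order.

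Extracting the coefficient of $\omega_{0}$ from the identity $T_{k}v=0$ then gives $\big((f(t)c(t)).\psi_{2N-1}\big)(t^{k})=0$ for all $k\in\Z$, where $f(t)=\sum_{i}a_{i}\cdot(2\ell+1-2N)\cdot(\text{multiplicity}_{i})\cdot t^{k'_{i}}\in\C[t,t^{-1}]$ is nonzero, since $2\ell+1-2N\le-1$ and the exponents $k'_{i}$ (the $t$-exponents of the deleted block-$\ell$ factors) are pairwise distinct. As $\C[t,t^{-1}]$ is an integral domain, $0\neq f(t)c(t)\in\mathrm{Ann}(\psi_{2N-1})$, so $\mathrm{Ann}(\psi_{2N-1})\neq0$, and Lemma \ref{lem:charCE} forces $\psi_{2N-1}\in\mathcal{E}$, contradicting the hypothesis. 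The main obstacle is the middle step: unlike in Proposition \ref{EXP-2N}, the operator built from $\rD(2N-1-\ell,\cdot)$ does not by itself annihilate the monomials $u$ with $\min\mathcal{D}_{\mathrm{set}}(u)>\ell$, and one must check carefully that twisting by $c(t)\in\mathrm{Ann}(\psi_{2N})$ cancels exactly the unwanted $\psi_{2N}$-contributions — including those arising from longer contraction chains when $\ell$ is small — so that the leading-term bookkeeping of Proposition \ref{EXP-2N} can be imported without essential change.
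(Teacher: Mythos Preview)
Your argument is correct and takes a genuinely different route from the paper's. The paper applies the bare operator $\rD(2N-1-\ell,k)-\psi_{2N-1-\ell}(t^{k})$ and does \emph{not} kill the $\psi_{2N}$-contributions in advance; instead it introduces a finer three-part decomposition of $v$ according to whether $\mathrm{lth}_{\ell}$ equals the maximum $\iota$, equals $\iota-1$, or is $\le\iota-2$, tracks separately the $\psi_{2N-1}$- and $\psi_{2N}$-parts of the coefficient of the leading monomial $v_{+}$, and reaches the contradiction via Lemmas~\ref{lem:charCE} and~\ref{lem:non-CE}: one obtains an identity $f(t).\psi_{2N-1}+g(t).\psi_{2N}=0$ with $f\ne 0$, impossible since $g(t).\psi_{2N}\in\mathcal{E}$ while $f(t).\psi_{2N-1}\notin\mathcal{E}$. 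Your device of twisting by $c(t)\in\mathrm{Ann}(\psi_{2N})$ is cleaner: every term in the Lemma~\ref{direct-calculate} expansion that carries a factor $\psi_{2N}(t^{k+j+m})$ has $m$ independent of $j$, so $\sum_{j}c_{j}$ annihilates it, and the residual computation collapses exactly to that of Proposition~\ref{EXP-2N} with $c(t).\psi_{2N-1}$ in place of $\psi_{2N}$. The paper's approach has the minor advantage of not having to name an explicit annihilator and of making the role of Lemma~\ref{lem:non-CE} visible; your approach avoids the auxiliary $\mathrm{lth}_{\ell}$-stratification and the two Claims needed there to control the $\psi_{2N}$ side. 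The obstacle you flag---that longer contraction chains must either be $\psi_{2N}$-type or else produce monomials strictly shorter than $\omega_{0}$---does resolve as you indicate: a chain with block-index sum $\ell+1$ always lands on $\psi_{2N}$, while a chain with block-index sum $\ell$ and more than one factor forces $\ell=0$ and a length drop $\ge 2$, hence cannot reach $\omega_{0}$.
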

\begin{proof}
Suppose to the contrary that $W(\fg_{\geq N},\psi)_{\psi}\neq\C v_{\psi}$.
Then from Proposition \ref{some-property-Whit}, we know that
there exists some \[v=\sum_{i=1}^{P}\alpha_{i}\nu_{i}\in W(\fg_{\geq N},\psi)_{\psi}\setminus \C v_{\psi},\]
where $\alpha_{1},\dots,\alpha_{P}\in\C^{\times}$ and $\nu_{1},\dots,\nu_{P}\in B(\fg_{\geq N},\psi)\setminus\{v_{\psi}\}$
are distinct elements such that $\mathrm{lth}_{-1}(\nu_{i})=0$ for $i=1,\dots,P$.

Set \[\ell=\min\,\bigcup_{i=1}^{P}\mathcal{D}_{\mathrm{set}}(\nu_{i})\quad \text{and}\quad
\iota=\max\,\{\mathrm{lth}_{\ell}(\nu_{i})\mid i=1,\dots,P\}.\]
For convenience, we rewrite $v$ as
\[v=\sum_{i=1}^{p}a_{i}v_{i}+\sum_{j=1}^{Q}\beta_{j}\mu_{j}+\sum_{m=1}^{Q'}\gamma_{m}u_{m},\]
where the parameters satisfy:
\begin{itemize}
\item $p\in\Z_{+}$, $Q,Q'\in\N$, $a_{1},\dots,a_{p},\beta_{1},\dots,\beta_{Q},\gamma_{1},\dots,\gamma_{Q'}\in\C^{\times}$;
\item $v_{1},\dots,v_{p}\in B(\fg_{\geq N},\psi)$ are distinct elements such that $v_{1}\succ v_{2}\succ\cdots\succ v_{p}$
and $\mathrm{lth}_{\ell}(v_{i})=\iota$ for $i=1,\dots,p$;
\item $\mu_{1},\dots,\mu_{Q}\in B(\fg_{\geq N},\psi)$ are distinct elements
such that $\mathrm{lth}_{\ell}(\mu_{j})=\iota-1$ for $j=1,\dots,Q$;
\item $u_{1},\dots,u_{Q'}\in B(\fg_{\geq N},\psi)$ are distinct elements
such that $\mathrm{lth}_{\ell}(u_{m})\leq\iota-2$ for $m=1,\dots,Q'$.
\end{itemize}
Set $v_{1}=x_{N-1}\cdots x_{\ell+2}\rD(\ell+1,m_{1})^{n_{1}}\cdots\rD(\ell+1,m_{r})^{n_{r}}
\rD(\ell,k_{1})^{l_{1}}\cdots\rD(\ell,k_{s})^{l_{s}}v_{\psi}$,
where $x_{N-1}\in\U(\fg_{N-1}),\dots,x_{\ell+2}\in\U(\fg_{\ell+2})$ are nonzero elements, $r\in\N$,
$m_{r}<\cdots<m_{1}$, $n_{1},\dots,n_{r}\in\Z_{+}$, $k_{s}<\cdots<k_{1}$,
and $l_{1},\dots,l_{s}\in\Z_{+}$ with $l_{1}+\cdots+l_{s}=\iota$.
Write \[x_{\ell+1}:=\rD(\ell+1,m_{1})^{n_{1}}\cdots\rD(\ell+1,m_{r})^{n_{r}}.\]

Without loss of generality, we assume $\sum_{j=1}^{Q}\beta_{j}\mu_{j}\neq0$.
We rewrite $\sum_{j=1}^{Q}\beta_{j}\mu_{j}$ as
\[\sum_{j=1}^{Q}\beta_{j}\mu_{j}=\sum_{n\in\Z}b_{n}x_{N-1}\cdots x_{\ell+1}\rD(\ell+1,n)\rD(\ell,k_{1})^{l_{1}}\cdots\rD(\ell,k_{s})^{l_{s}-1}v_{\psi}+\sum_{j=1}^{q}\lambda_{j}w_{j},\]
where the parameters satisfy:
\begin{itemize}
\item[(c1)] there are only finitely many $b_{n}\in\C$ ($n\in\Z$) are non-zero;
\item[(c2)] $q\in\N$, $w_{1},\dots,w_{q}\in B(\fg_{\geq N},\psi)$ are distinct elements such that
$\mathrm{lth}_{\ell}(w_{j})=\iota-1$ for $j=1,\dots,q$;
\item[(c3)] for each $j=1,\dots,q$, either $\mathcal{T}_{\ell}(w_{j})\neq \mathcal{T}_{\ell}(v_{+})$ or
$\mathcal{T}_{\ell}(w_{j})=\mathcal{T}_{\ell}(v_{+})$ but $w_{j}\not\in\{x_{N-1}\cdots x_{\ell+1}\rD(\ell+1,n)\rD(\ell,k_{1})^{l_{1}}\cdots\rD(\ell,k_{s})^{l_{s}-1}v_{\psi}\mid n\in\Z\}$,
where \[v_{+}=x_{N-1}\cdots x_{\ell+2}\rD(\ell+1,m_{1})^{n_{1}}\cdots\rD(\ell+1,m_{r})^{n_{r}}
\rD(\ell,k_{1})^{l_{1}}\cdots\rD(\ell,k_{s})^{l_{s}-1}v_{\psi}.\]
\end{itemize}

\vspace{2mm}

For $k\in\Z$, we have
$$\big(\rD(2N-1-\ell,k)-\psi_{2N-1-\ell}(t^{k})\big).v_{1}=l_{s}(2\ell-2N+1)
\psi_{2N-1}(t^{k+k_{s}})v_{+}+\sum_{i=1}^{p''}a_{i}''v_{i}'',$$
where $a_{1}'',\dots,a_{p''}''\in\C^{\times}$ and $v_{+},v_{1}'',\dots,v_{p''}''\in B(\fg_{\geq N},\psi)$
are distinct elements such that $v_{i}''\prec v_{+}$ for $i=1,\dots,p''$.

\vspace{2mm}
Similar to the proof of Proposition \ref{EXP-2N}, we know that there exists $1\leq p_{0}\leq p$ such that
\[v_{i}=x_{N-1}\cdots x_{\ell+2}\rD(\ell+1,m_{1})^{n_{1}}\cdots\rD(\ell+1,m_{r})^{n_{r}}
\rD(\ell,k_{1})^{l_{1}}\cdots\rD(\ell,k_{s})^{l_{s}-1}\rD(\ell,k_{s_{i}})v_{\psi}\]
for $i=2,\dots,p_{0}$, where $k_{s}>k_{s_{2}}>\cdots>k_{s_{p_{0}}}$ and that
\[v_{p_{0}+1}\prec x_{N-1}\cdots x_{\ell+2}\rD(\ell+1,m_{1})^{n_{1}}\cdots\rD(\ell+1,m_{r})^{n_{r}}
\rD(\ell,k_{1})^{l_{1}}\cdots\rD(\ell,k_{s})^{l_{s}-1}\rD(\ell,m)v_{\psi}\]
for any $m\in\Z$ with $m<k_{s}$. Then we obtain that, for $k\in\Z$,
$$\big(\rD(2N-1-\ell,k)-\psi_{2N-1-\ell}(t^{k})\big).\sum_{i=1}^{p}a_{i}v_{i}=b_{0}v_{+}+\sum_{i=1}^{p'}b_{i}'v_{i}',$$
where $b_{0}:=l_{s}a_{1}(2\ell-2N+1)\psi_{2N-1}(t^{k+k_{s}})+\sum_{i=2}^{p_{0}}a_{i}(2\ell-2N+1)\psi_{2N-1}(t^{k+k_{s_{i}}}),\
b_{1}',\dots,b_{p'}'\in\C$ and $v_{1}',\dots,v_{p'}'\in B(\fg_{\geq N},\psi)$ are distinct elements such that
$v_{i}'\prec v_{+}$ for $i=1,\dots,p'$.

\vspace{2mm}

\noindent {\bf Claim 1}.\quad For $k\in\Z$, we have
$$\big(\rD(2N-1-\ell,k)-\psi_{2N-1-\ell}(t^{k})\big).\sum_{j=1}^{Q}\beta_{j}\mu_{j}=c_{0}v_{+}+\sum_{j=1}^{q'}c_{j}'w_{j}',$$
where $c_{0}:=\sum_{n\in\Z}b_{n}(2\ell+2-2N)\big(1+\sum_{i=1}^{r}n_{i}\delta_{n,m_{i}}\big)\psi_{2N}(t^{k+n}),\ c_{1}',\dots,
c_{q'}'\in\C$
and $w_{1}',\dots,w_{q'}'\in B(\fg_{\geq N},\psi)\setminus\{v_{+}\}$.

Note that $\ell+1\geq1$, which implies that we only need to collect the elements
appearing in $\sum_{j=1}^{Q}\beta_{j}\mu_{j}$ that have the form
\[x_{N-1}\cdots x_{\ell+1}\rD(\ell+1,n)\rD(\ell,k_{1})^{l_{1}}\cdots\rD(\ell,k_{s})^{l_{s}-1}v_{\psi},\quad n\in\Z.\]
Then from (c1)-(c3), we obtain Claim 1.

\noindent {\bf Claim 2}.\quad For $k\in\Z$, we have
$$\big(\rD(2N-1-\ell,k)-\psi_{2N-1-\ell}(t^{k})\big).\sum_{m=1}^{Q'}\gamma_{m}u_{m}=\sum_{j=1}^{Q''}\gamma_{j}'z_{j},$$
where $\gamma_{1}',\dots,\gamma_{Q''}'\in\C$ and $z_{1},\dots,z_{Q''}\in B(\fg_{\geq N},\psi)\setminus\{v_{+}\}$.

In fact, we have $\mathrm{lth}_{\ell}(z_{j})\leq\iota-2$ for all $j=1,\dots,Q''$.
Thus $z_{j}\neq v_{+}$ for all $j=1,\dots,Q''$.

\vspace{3mm}

Since $\big(\rD(2N-1-\ell,k)-\psi_{2N-1-\ell}(t^{k})\big).v=0$ for all $k\in\Z$, this forces
$b_{0}+c_{0}=0$ for all $k\in\Z$.
That is,
$$\begin{aligned}
\Big[\Big(\big(l_{s}a_{1}(2\ell-2N+1)t^{k_{s}}+\sum_{i=2}^{p_{0}}a_{i}(2\ell-2N+1)t^{k_{s_{i}}}\big).\psi_{2N-1}\Big)
+\\ \quad \Big(\big(\sum_{n\in\Z}b_{n}(2\ell+2-2N)\big(1+\sum_{i=1}^{r}n_{i}\delta_{n,m_{i}}\big)t^{n}\big).\psi_{2N}\Big)\Big](t^{k})=0
\end{aligned}$$
for all $k\in\Z$, which means
$$\begin{aligned}
\Big(l_{s}a_{1}(2\ell-2N+1)t^{k_{s}}+\sum_{i=2}^{p_{0}}a_{i}(2\ell-2N+1)t^{k_{s_{i}}}\Big).\psi_{2N-1}
+\\ \quad \Big(\sum_{n\in\Z}b_{n}(2\ell+2-2N)\big(1+\sum_{i=1}^{r}n_{i}\delta_{n,m_{i}}\big)t^{n}\Big).\psi_{2N}=0
\end{aligned}$$

Note that \[l_{s}a_{1}(2\ell-2N+1)t^{k_{s}}+\sum_{i=2}^{p_{0}}a_{i}(2\ell-2N+1)t^{k_{s_{i}}}\in\mathcal{A}\]
is non-zero element. Since $\psi_{2N}\in\mathcal{E}$ and $\psi_{2N-1}\notin\mathcal{E}$,
from Lemmas \ref{lem:charCE} and \ref{lem:non-CE}, we get a contradiction.
Note that even if all $b_{n}=0$ for $n\in\Z$, we also can get the contradiction. This completes the proof.
\end{proof}


{\em Proof of Theorem \ref{first-main-restlt}}. From Propositions
\ref{psi-reducible}, \ref{EXP-2N}, \ref{EXP-2N-1} and Lemmas \ref{ALZ}, \ref{CJ},
Theorem \ref{first-main-restlt} follows.\hfill$\Box$

For any $f(t)=\sum_{i=-r}^{r}a_{i}t^{i}\in\mathcal{A}$, $a_{i}\in\C$, we write
\[\rD(n,f)=\sum_{i=-r}^{r}a_{i}\rD(n,i),\ n\geq-1.\]

For the general theory of Whittaker modules, it is difficult to determine all Whittaker vectors.
We present certain Whittaker vectors in the following result.
\begin{proposition}\label{Whittaker-vector-N}
Let $\psi_{2N-1},\psi_{2N}\in\mathcal{E}$. For any
$f_{1}(t),\dots, f_{s}(t)\in\mathrm{Ann}(\psi_{2N-1})\cap\mathrm{Ann}(\psi_{2N})$, we have
\[\prod_{j=1}^{s}\rD(N-1,f_{j})v_{\psi}\in W(\fg_{\geq N},\psi)_{\psi}.\]
\end{proposition}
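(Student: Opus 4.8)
The plan is to show that each $w := \prod_{j=1}^{s}\rD(N-1,f_j)v_\psi$ is annihilated by $\rD(m,k) - \psi_m(t^k)$ for every $m \geq N$ and $k \in \Z$; since $\fg_{\geq N}$ is generated by the $\rD(m,k)$ with $m \geq N$, this is exactly the statement that $w$ is a Whittaker vector of type $\psi$. The key computational tool is Lemma \ref{direct-calculate}: writing each $\rD(N-1,f_j) = \sum_i a_{j,i}\rD(N-1,i)$, expanding $w$ as a linear combination of basis elements $\rD(N-1,k_{1})\cdots\rD(N-1,k_{s})v_\psi$, and applying the lemma, we get
\[
\big(\rD(m,k)-\psi_m(t^k)\big).\rD(N-1,k_1)\cdots\rD(N-1,k_s)v_\psi
= \sum_{j=1}^{s}\rD(N-1,k_1)\cdots[\rD(m,k),\rD(N-1,k_j)]\cdots\rD(N-1,k_s)v_\psi,
\]
where for brevity I suppress the fact that the bracket term should be moved to a standard-ordered position; the point is that $[\rD(m,k),\rD(N-1,k_j)] = (N-1-m)\rD(m+N-1,k+k_j)$ raises the degree index to $m+N-1 \geq 2N-1$.

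First I would reduce to the cases $m = N$ and $m = N+1$. Indeed, for $m \geq N+2$ the bracket $[\rD(m,k),\rD(N-1,k_j)]$ lands in $\fg_{m+N-1}$ with $m+N-1 \geq 2N+1$, and since $\psi_i = 0$ for $i \geq 2N+1$ the factor $\rD(m+N-1,k+k_j)$ acts on $v_\psi$ by $0$; hence $(\rD(m,k)-\psi_m(t^k)).w = 0$ trivially. So the substance is in $m=N$ (bracket lands in $\fg_{2N-1}$, where $\psi_{2N-1}$ lives) and $m=N+1$ (bracket lands in $\fg_{2N}$, where $\psi_{2N}$ lives). In the case $m=N$, after commuting each $\rD(2N-1,k+k_j)$ past the remaining $\rD(N-1,\cdot)$ factors — which only produces terms in $\fg_{\geq 3N-2}$ acting as $0$ on $v_\psi$, plus lower-order corrections — one is left with $\rD(2N-1,k+k_j)v_\psi = \psi_{2N-1}(t^{k+k_j})v_\psi$. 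Re-summing over the expansions of the $f_j$'s, the total coefficient of $v_\psi$ is, up to the scalar $(N-1-N) = -1$, a sum of terms of the form $\big(f_j(t).\psi_{2N-1}\big)(t^{k+\ast})$ times products of the other $f_i$ evaluated appropriately; since each $f_j \in \mathrm{Ann}(\psi_{2N-1})$, every such term vanishes. The case $m=N+1$ is identical with $\psi_{2N-1}$ replaced by $\psi_{2N}$ and $\mathrm{Ann}(\psi_{2N-1})$ by $\mathrm{Ann}(\psi_{2N})$.

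The step I expect to be the main obstacle is bookkeeping the commutators cleanly when $s \geq 2$: applying $\rD(m,k)-\psi_m(t^k)$ to a product of $s$ factors produces $s$ terms by the Leibniz-type identity of Lemma \ref{direct-calculate}, and in each term the newly created high-degree element $\rD(2N-1,\cdot)$ or $\rD(2N,\cdot)$ must be pushed to the right past the surviving $\rD(N-1,\cdot)$ factors to reach $v_\psi$; each such push generates further brackets into $\fg_{\geq 3N-2}$. The clean way to handle this is induction on $s$: factor $w = \rD(N-1,f_1)\,w'$ with $w' = \prod_{j=2}^s \rD(N-1,f_j)v_\psi$, use $\rD(m,k)w = \rD(N-1,f_1)\rD(m,k)w' + [\rD(m,k),\rD(N-1,f_1)]w'$, invoke the inductive hypothesis on $w'$ to collapse the first summand, and show the second summand vanishes because $[\rD(m,k),\rD(N-1,f_1)]$ is a linear combination of $\rD(i,\cdot)$ with $i \geq 2N-1$ and $f_1 \in \mathrm{Ann}(\psi_{2N-1})\cap\mathrm{Ann}(\psi_{2N})$ kills the resulting action on $w'$ — here one uses that acting by $\rD(2N-1,\cdot)$ (resp.\ $\rD(2N,\cdot)$) on $w'$ again only yields $v_\psi$-multiples governed by $\psi_{2N-1}$ (resp.\ $\psi_{2N}$) evaluated against $f_1$, everything else being in the kernel of $\psi$. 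Once this inductive skeleton is set up, the remaining verification is the routine exp-polynomial identity $\big(g(t).\psi\big) = 0$ for $g \in \mathrm{Ann}(\psi)$, which is immediate from the definition of $\mathrm{Ann}$.
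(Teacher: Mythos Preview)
Your proposal is correct and, once you settle on the inductive factorization $w = \rD(N-1,f_1)\,w'$ in the final paragraph, it matches the paper's proof exactly: the paper writes $\big(\rD(n,k)-\psi_n(t^k)\big).w = (N-1-n)\,\rD(n+N-1,t^kf_1).w' + \rD(N-1,f_1)\big(\rD(n,k)-\psi_n(t^k)\big).w'$ and uses the induction hypothesis (that $w'$ is itself a Whittaker vector of type $\psi$) to kill both summands, observing that $\psi_{n+N-1}(t^kf_1) = (f_1.\psi_{n+N-1})(t^k) = 0$ for every $n\geq N$. You can therefore drop the preliminary case-splitting on $m$ and the talk of ``$v_\psi$-multiples'': the inductive hypothesis gives $\rD(n+N-1,t^kf_1).w' = \psi_{n+N-1}(t^kf_1)\,w' = 0$ directly, uniformly in $n\geq N$.
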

\begin{proof}
For $n\geq N$, we note that
\begin{eqnarray*}&&\big(\rD(n,k)-\psi_{n}(t^{k})\big).\prod_{j=1}^{s}\rD(N-1,f_{j})v_{\psi}=(N-1-n)
(\rd_{n+N-1}\otimes t^{k}f_{1}).\prod_{j=2}^{s}\rD(N-1,f_{j})v_{\psi}\\&&
\qquad\qquad\qquad+\rD(N-1,f_{1})\Big(\big(\rD(n,k)-\psi_{n}(t^{k})\big).\prod_{j=2}^{s}\rD(N-1,f_{j})v_{\psi}\Big).\end{eqnarray*}
Since $f_{1}\in\mathrm{Ann}(\psi_{2N-1})\cap\mathrm{Ann}(\psi_{2N})$ and $\psi_{i}=0$ for $i\geq2N+1$,
we have $\psi_{n+N-1}(t^{k}f_{1})=(f_{1}(t).\psi)(t^{k})=0$ for $n\geq N$. Then
\[(\rd_{n+N-1}\otimes t^{k}f_{1}(t)).\prod_{j=2}^{s}\rD(N-1,f_{j})v_{\psi}=\big(\rd_{n+N-1}\otimes t^{k}f_{1}-\psi_{n+N-1}(t^{k}f_{1})\big).\prod_{j=2}^{s}\rD(N-1,f_{j})v_{\psi}.\]
By induction on $s$, we obtain $(\rd_{n+N-1}\otimes t^{k}f_{1}(t)).\prod_{j=2}^{s}\rD(N-1,f_{j})v_{\psi}=0$
and $\big(\rD(n,k)-\psi_{n}(t^{k})\big).\prod_{j=2}^{s}\rD(N-1,f_{j})v_{\psi}=0$. This gives
\[\big(\rD(n,k)-\psi_{n}(t^{k})\big).\prod_{j=1}^{s}\rD(N-1,f_{j})v_{\psi}=0\] for $n\geq N$, i.e.,
$\prod_{j=1}^{s}\rD(N-1,f_{j})v_{\psi}\in W(\fg_{\geq N},\psi)_{\psi}$.
\end{proof}

\section{Whittaker modules associated to $(\fg,\fg_{-})$}
Let $\varphi:\fg_{-}\rightarrow\C$ be a Whittaker function, i.e., $\varphi\in\fg_{-}^{*}$.
$\varphi$ induces a linear functional on $\mathcal{A}$, and we denote it also by $\varphi$. Namely,
\[\varphi(t^{n}):=\varphi(\rd_{-1}\otimes t^{n}),\quad n\in\Z.\]

In this section, we study the simplicity of Whittaker modules $W(\fg_{-},\varphi)$.
Now we state the main result of this section, and we prove it
by using Proposition \ref{E-Reducible} to Proposition \ref{var-not-exp}.
\begin{thm}\label{second-main-restlt}Let $\varphi:\fg_{-}\rightarrow\C$ be a Whittaker function.
Then the universal Whittaker $\fg$-module $W(\fg_{-},\varphi)$ is simple if and only if $\varphi\notin\mathcal{E}$.
\end{thm}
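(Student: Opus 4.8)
The plan is to follow the same strategy that worked for Theorem \ref{first-main-restlt}: by Lemmas \ref{ALZ} and \ref{CJ}, the module $W(\fg_{-},\varphi)$ is simple if and only if its space of Whittaker vectors is exactly $\C v_{\varphi}$, so the whole proof reduces to two complementary statements. First, if $\varphi\in\mathcal{E}$, then $W(\fg_{-},\varphi)$ is reducible (this is Proposition \ref{E-Reducible}); second, if $\varphi\notin\mathcal{E}$, then $W(\fg_{-},\varphi)_{\varphi}=\C v_{\varphi}$, and hence $W(\fg_{-},\varphi)$ is simple (Proposition \ref{var-not-exp}). Combining these with Lemmas \ref{ALZ} and \ref{CJ} gives the theorem.

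For the reducibility direction, I would imitate Proposition \ref{psi-reducible}. Since $\varphi\in\mathcal{E}$, by Lemma \ref{lem:charCE} there is a nonzero polynomial $c(t)=\sum_{j}c_{j}t^{j}\in\mathrm{Ann}(\varphi)$ of positive degree. One then looks for a low-degree correction vector built from $\C v_{\varphi}$ together with one application of some $\rD(n,\cdot)$ (or from $c(t)$ acting in an appropriate way) that is killed by all $\rD(m,k)-\varphi_{m}(t^{k})$ for $m\ge 0$ while still being annihilated appropriately by $\fg_{-}$, so that the submodule it generates is proper; the point is to produce a nonzero Whittaker-type vector other than $v_{\varphi}$, or more directly a proper nonzero submodule. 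Here one has to be careful because $\fg_{-} = \fg_{-1} = \rd_{-1}\otimes\mathcal{A}$ is abelian and sits at the bottom of the grading, so the relevant bracket relations $[\rD(m,k),\rD(-1,l)] = (-1-m)\rD(m-1,k+l)$ push things down by one degree; using $\mathrm{Ann}(\varphi)$ one arranges the $\varphi$-values to cancel.

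For the hard direction — $\varphi\notin\mathcal{E}$ forces $W(\fg_{-},\varphi)_{\varphi}=\C v_{\varphi}$ — I expect to need a PBW basis $B(\fg_{-},\varphi)$ for $W(\fg_{-},\varphi)$, a principal total order on it analogous to the one before (ordered first by total length, then by the "degree pattern", then lexicographically by the exponents $t$-powers), and a leading-term analysis: given a hypothetical Whittaker vector $v = \sum a_i v_i \notin \C v_{\varphi}$, act by a well-chosen operator $\rD(m,k)-\varphi_{m}(t^{k})$ for suitable $m\ge 0$ and large $k$, track the leading term, and show the leading coefficient has the form $\big(g(t).\varphi\big)(t^{k})$ for some fixed nonzero $g(t)\in\mathcal{A}$ (depending on the leading term of $v$), which cannot vanish for all $k$ since $\varphi\notin\mathcal{E}$ and, by Lemma \ref{lem:non-CE}, $g(t).\varphi\notin\mathcal{E}$ is in particular nonzero. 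This yields a contradiction.

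The main obstacle is the last step: unlike the $(\fg,\fg_{\geq N})$ case — where $\fg_{\geq N}$ is "at the top" and the acting operators $\rD(m,k)$ with $m\ge N$ only decrease length and one can cleanly extract the bottom slot — here $\fg_{-} = \fg_{-1}$ is "at the bottom", so the operators $\rD(m,k)$ for $m\ge 0$ that one uses to test a candidate Whittaker vector both lower and raise pieces of a PBW monomial in a more entangled way, and the bracket $[\rD(m,k),\rD(-1,l)]=(-1-m)\rD(m-1,k+l)$ can feed back into degree $-1$ when $m=0$. So the bookkeeping of which leading monomial survives, and the verification that the surviving leading coefficient is genuinely of the form $(g(t).\varphi)(t^k)$ with $g\ne 0$, is the delicate part; I would handle it by first proving an analogue of Lemma \ref{direct-calculate} and Proposition \ref{ele-lemma} for this setup (isolating the action on the degree-$(-1)$ block and controlling $\mathrm{lth}$, $\mathcal{D}_{\mathrm{set}}$, $\mathcal{T}_{\mathrm{set}}$), then running the same contradiction argument as in Propositions \ref{EXP-2N} and \ref{EXP-2N-1}, with the single functional $\varphi$ (in place of the pair $\psi_{2N-1},\psi_{2N}$) and Lemma \ref{lem:non-CE} doing the work at the end.
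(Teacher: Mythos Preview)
Your high-level architecture is correct (reduce to computing $W(\fg_{-},\varphi)_{\varphi}$ via Lemmas \ref{ALZ} and \ref{CJ}, then prove the two directions separately), but there is a genuine conceptual reversal in both directions. The Whittaker subalgebra here is $\fg_{-}=\fg_{-1}$, so the PBW basis $B(\fg_{-},\varphi)$ of $W(\fg_{-},\varphi)$ is built from monomials in $\rD(r,\cdot)$ with $r\ge 0$ (the factors $\rD(-1,\cdot)$ act as scalars on $v_{\varphi}$ and never appear), and the operators that \emph{test} the Whittaker condition are $\rD(-1,k)-\varphi(t^{k})$, not $\rD(m,k)$ for $m\ge 0$. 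You have these roles swapped: your bracket $[\rD(m,k),\rD(-1,l)]$ is computing the wrong commutator, and the expression ``$\rD(m,k)-\varphi_{m}(t^{k})$ for $m\ge 0$'' is not a Whittaker condition at all (there is no $\varphi_{m}$ for $m\ge 0$). In the reducibility direction the paper takes $u=\sum_{j}c_{j}\rD(0,j)v_{\varphi}$ with $c(t)\in\mathrm{Ann}(\varphi)$ and checks $(\rD(-1,k)-\varphi(t^{k}))u=(c(t).\varphi)(t^{k})v_{\varphi}=0$; then $\U(\fg)u=\U(\fg_{\ge 0})u$ misses $v_{\varphi}$.

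For the hard direction, the correct engine is $[\rD(-1,k),\rD(r,l)]=(r+1)\rD(r-1,k+l)$, which lowers the first index by one; when $r=0$ this produces $\rD(-1,k+l)$, which hits $v_{\varphi}$ as the scalar $\varphi(t^{k+l})$. Because $\rD(-1,k)$ interacts with \emph{every} degree $r\ge 0$ in a PBW monomial (not just the bottom slot), the paper needs an intermediate reduction you did not anticipate: one first proves (Proposition \ref{reduce-to-0}, using Lemmas \ref{>0-case} and \ref{=0-case} and a ``large $k$'' argument on $\mathcal{T}_{\mathrm{set}}$) that any Whittaker vector is supported on monomials with $\mathcal{D}_{\mathrm{set}}=\{0\}$. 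Only after that does the leading-term computation in Proposition \ref{var-not-exp} produce a coefficient of the form $\big(g(t).\varphi\big)(t^{k})$ with $g\ne 0$, contradicting $\varphi\notin\mathcal{E}$. Your endgame idea (extract a nonzero $g(t)$ and invoke Lemma \ref{lem:non-CE}) is right, but it only becomes accessible after this reduction and with the correct testing operator $\rD(-1,k)$.
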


\begin{proposition}\label{E-Reducible}
If $\varphi\in\mathcal{E}$, then $W(\fg_{-},\varphi)$ is reducible.
\end{proposition}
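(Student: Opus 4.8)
The plan is to construct an explicit proper nonzero submodule of $W(\fg_{-},\varphi)$ when $\varphi\in\mathcal{E}$, mimicking the strategy used in Proposition \ref{psi-reducible}. Since $\varphi\in\mathcal{E}$, Lemma \ref{lem:charCE} gives a nonzero polynomial $c(t)=\sum_{j=0}^{q}c_{j}t^{j}\in\mathrm{Ann}(\varphi)$, which I will take of minimal degree $q\ge 1$ (note $q\ge 1$ since if $q=0$ then $c_0\varphi=0$ forces $\varphi=0$, and $0\in\mathcal{E}$ but then the submodule construction is even easier, or one treats $\varphi=0$ directly). The natural candidate for a Whittaker-type vector is
\[
u=\rD\Big(-1,\ \frac{c(t)}{t}\Big)v_{\varphi}\ \ \text{or more safely}\ \ u=\sum_{j=0}^{q}c_{j}\rD(0,j-1)v_{\varphi}\in W(\fg_{-},\varphi),
\]
where $\rD(0,j-1)=\rd_{0}\otimes t^{j-1}$; since $\rd_0=t\frac{\rd}{\rd t}$ lies in degree $0$, this $u$ is a genuine nonzero element of the module, and I expect it to generate a proper submodule. (The reason for using degree-$0$ generators rather than degree-$(-1)$ ones is that $\fg_{-1}=\fg_{-}$ is abelian and acts by the scalar $\varphi$, so it cannot help cut down the module; one needs to go up one degree.)

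First I would compute, for every $n\ge -1$ and $k\in\Z$, the action of $\rD(n,k)-\varphi_{n}(t^{k})$ on $u$, where $\varphi_{n}=0$ for all $n\ge 0$ (as $\varphi$ is supported on $\fg_{-1}$ and $[\fg_{-1},\fg_{-1}]=0$, the Whittaker function only sees degree $-1$). Using $[\rD(n,k),\rD(0,j-1)]=-n\,\rD(n,k+j-1)$ and $[\rD(n,k),\rD(-1,j-1)]=(-1-n)\rD(n-1,k+j-1)$, the bracket of $\rD(n,k)$ with $u$ should collapse to a multiple of $\big(c(t).\varphi\big)(t^{k+\bullet})=0$ for $n=-1$, and into strictly lower-degree or annihilated terms for $n\ge 1$; the key identity is that $\sum_j c_j \varphi(t^{k+j-1}) = (t^{-1}.(c(t).\varphi))(t^k)=0$. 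This will show that $\rD(n,k)$ for $n\ge N'$ (for a suitable small threshold, here essentially $n\ge 1$, or $n\ge -1$ after reorganizing) sends $u$ back into the span of lower pieces, so that $\U(\fg)u$ is spanned by applying only $\U(\fg_{-1}),\U(\fg_0)$ — in particular $\U(\fg)u=\U(\fg_{-1})\U(\fg_0)u$, hence does not contain $v_{\varphi}$ and is a proper submodule.

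The main obstacle I anticipate is bookkeeping: because $\rd_{-1}$ and $\rd_0$ generate lower-degree elements under bracketing with the higher $\rd_n$'s, one must check that no term of $\rD(n,k).u$ lands back on $v_{\varphi}$ itself (i.e. that the submodule stays within $\U(\fg_{-1})\U(\fg_0)u$ and $v_{\varphi}\notin\U(\fg_{-1})\U(\fg_0)u$), and this requires tracking the PBW-grading by $\mathrm{lth}$ carefully as in Lemma \ref{direct-calculate}. Concretely, I would argue that every vector appearing in $\U(\fg)u$ has the form $(\text{word in }\fg_{-1},\fg_0)\cdot u$ with at least one factor $\rD(0,\bullet)$ or $\rD(-1,\bullet)$ coming from $u$, so the coefficient of $v_{\varphi}$ is forced to vanish; then nonvanishing of $u$ (clear from PBW, since $c(t)\ne 0$) finishes the proof. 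Once the submodule is exhibited as proper and nonzero, reducibility of $W(\fg_{-},\varphi)$ follows immediately.
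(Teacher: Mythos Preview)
Your overall strategy --- take $c(t)\in\mathrm{Ann}(\varphi)$, form a degree--$0$ element $u=\sum_j c_j\rD(0,\ast)v_\varphi$, check it is a Whittaker vector, and conclude that the cyclic submodule it generates is proper --- is exactly the paper's approach. Two remarks, one cosmetic and one substantive.

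\textbf{Cosmetic.} The shift $j\mapsto j-1$ is unnecessary. Since $\mathcal{A}=\C[t,t^{-1}]$, if $c(t)\in\mathrm{Ann}(\varphi)$ then $t^{-1}c(t)\in\mathrm{Ann}(\varphi)$ as well, so your $u$ and the paper's $u=\sum_{j}c_j\rD(0,j)v_\varphi$ are equally valid. There is no need to invoke $t^{-1}$.

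\textbf{Substantive.} You have the roles of the two pieces of $\fg$ reversed when arguing properness. In the pair $(\fg,\fg_{-1})$ it is $\fg_{-1}$ that defines the Whittaker function, so showing $u$ is a Whittaker vector only requires checking $\big(\rD(-1,k)-\varphi(t^k)\big).u=0$ for all $k$; there is nothing to check for $n\ge 0$, and in particular for $n\ge 1$ the element $\rD(n,k).u$ does \emph{not} collapse --- it is a genuine length--$2$ element of $W(\fg_{-},\varphi)$. Consequently your assertion $\U(\fg)u=\U(\fg_{-1})\U(\fg_0)u$ is false: what one gets from PBW is
\[
\U(\fg)u=\U(\fg_{\ge 0})\,\U(\fg_{-1})\,u=\U(\fg_{\ge 0})\,u,
\]
with the full (infinite--dimensional) $\fg_{\ge 0}=\bigoplus_{n\ge 0}\fg_n$ on the left. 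Properness then follows because under the PBW identification $W(\fg_{-},\varphi)\cong\U(\fg_{\ge 0})v_\varphi$, the element $u$ lies in the augmentation ideal $\fg_{\ge 0}\,\U(\fg_{\ge 0})\cdot v_\varphi$, which is stable under left multiplication by $\U(\fg_{\ge 0})$ and does not contain $v_\varphi$. This is the ``at least one factor coming from $u$'' idea you sketched, but it must be run inside $\U(\fg_{\ge 0})$, not $\U(\fg_{-1})\U(\fg_0)$.
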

\begin{proof}
Since $\varphi\in\mathcal{E}$, there exists a non-zero polynomial
$c(t)=\sum_{j=0}^{q}c_{j}t^{j}$ with degree $q\geq1$ such that
$c(t)\in\mathrm{Ann}(\varphi)$. Similar to the proof of Proposition \ref{psi-reducible},
we can prove that the submodule $\U(\fg)u$ is a proper submodule of $W(\fg_{-},\varphi)$,
where $u=\sum_{j=0}^{q}c_{j}\rD(0,j)v_{\varphi}$.
\end{proof}

Set\begin{eqnarray*}&&B(\fg_{-},\varphi)=\{\rD(r_{1},k_{r_{1},1})\rD(r_{1},k_{r_{1},2})\cdots
\rD(r_{1},k_{r_{1},s_{1}})\rD(r_{2},k_{r_{2},1})\cdots
\rD(r_{2},k_{r_{2},s_{2}})\\&&
\quad\cdots\cdots\rD(r_{n},k_{r_{n},1})\rD(r_{n},k_{r_{n},2})\cdots
\rD(r_{n},k_{r_{n},s_{n}})v_{\varphi}\mid n\in\N,\ k_{r_{i},j}\in\Z\\&&\quad
0\leq r_{n}<\cdots<r_{1},\ k_{r_{i},s_{i}}\leq\cdots\leq k_{r_{i},2}\leq k_{r_{i},1},\
1\leq i\leq n,\ 1\leq j\leq s_{i}\},\end{eqnarray*}
which forms a basis of $W(\fg_{-},\varphi)$.
For a vector \begin{eqnarray}\label{Topical-vector--}u=\prod_{i=1}^{n}\prod_{j=1}^{s_{i}}
\rD(r_{i},k_{r_{i},j})^{l_{r_{i},j}}v_{\varphi}\in B(\fg_{-},\varphi),\end{eqnarray}
where $0\leq r_{n}<\cdots<r_{1}$, $l_{r_{i},j}\in\Z_{+}$, and $k_{r_{i},s_{i}}<\cdots<k_{r_{i},2}<k_{r_{i},1}$,
we let \[\mathrm{lth}_{r_{i}}(u)=\sum_{j=1}^{s_{i}}l_{r_{i},j},\quad
\mathrm{lth}(u)=\sum_{i=1}^{n}\sum_{j=1}^{s_{i}}l_{r_{i},j},\]
\[\mathcal{D}(u)=\big(\underbrace{r_{1},r_{1},\dots,r_{1}}_{\mathrm{lth}_{r_{1}}(u)-\mathrm{times}},
\underbrace{r_{2},r_{2},\dots,r_{2}}_{\mathrm{lth}_{r_{2}}(u)-\mathrm{times}},\dots,
\underbrace{r_{n},r_{n},\dots,r_{n}}_{\mathrm{lth}_{r_{n}}(u)-\mathrm{times}}\big),\]
$$\begin{aligned}\mathcal{T}(u)=\big(\underbrace{k_{r_{1},1},k_{r_{1},1},\dots,k_{r_{1},1}}_{l_{r_{1},1}-\mathrm{times}},
\underbrace{k_{r_{1},2},k_{r_{1},2},\dots,k_{r_{1},2}}_{l_{r_{1},2}-\mathrm{times}},\dots,
\underbrace{k_{r_{1},s_{1}},k_{r_{1},s_{1}},\dots,k_{r_{1},s_{1}}}_{l_{r_{1},s_{1}}-\mathrm{times}},\\ \cdots,
\underbrace{k_{r_{n},1},k_{r_{n},1},\dots,k_{r_{n},1}}_{l_{r_{n},1}-\mathrm{times}},\cdots,
\underbrace{k_{r_{n},s_{n}},k_{r_{n},s_{n}},\dots,k_{r_{n},s_{n}}}_{l_{r_{n},s_{n}}-\mathrm{times}}\big),\end{aligned}$$
\[\mathcal{D}_{\mathrm{set}}(u)=\{r_{1},\dots,r_{n}\},\quad\mathcal{T}_{r_{i},
\mathrm{set}}(u)=\{k_{r_{i},j}\mid j=1,2,\dots,s_{i}\}\ \text{for}\ i=1,2,\dots,n,\]and
$\mathcal{T}_{\mathrm{set}}(u)=\{k_{r_{i},j}\mid i=1,2,\dots,n,\ j=1,2,\dots,s_{i}\}$.

For any $r\in\Z_{+}$, recall that the total order on $\Z^{r}$ in \eqref{total-order}.
We define the principle total order ``$\succ$'' on $B(\fg_{-},\varphi)$ as follows:
for different $u,v\in B(\fg_{-},\varphi)$, set $u\succ v$ if and only if one of the following conditions satisfy:
     \begin{itemize}
     \item $\text{lth}(u)>\text{lth}(v)$;
     \item $\text{lth}(u)=\text{lth}(v)$ and $\mathcal{D}(u)\succ\mathcal{D}(v)$
     under the order $\succ$ on $\Z^{\text{lth}(u)}$;
     \item $\text{lth}(u)=\text{lth}(v)$, $\mathcal{D}(u)=\mathcal{D}(v)$,
     and $\mathcal{T}(u)\succ\mathcal{T}(v)$
     under the order $\succ$ on $\Z^{\text{lth}(u)}$.
     \end{itemize}

We need the following lemmas.
\begin{lemma}\label{>0-case}
Let $u\in B(\fg_{-},\varphi)$ be as in \eqref{Topical-vector--}.
Suppose that $\mathrm{lth}(u)\geq1$ and $r_{n}>0$. Then we have\\
(1) $\big(\rD(-1,k)-\varphi(t^{k})\big).u\neq0$ for all $k\in\Z$.\\
(2) for any $k\in\Z$, if we write $\big(\rD(-1,k)-\varphi(t^{k})\big).u=\sum_{s=1}^{p}a_{s}v_{s}$,
where $a_{1}.\dots,a_{p}\in\C^{\times}$ and $v_{1},\dots,v_{p}\in B(\fg_{-},\varphi)$
with $v_{1}\succ\cdots\succ v_{p}$, then
\[a_{1}=(r_{n}+1)l_{r_{n},s_{n}},\quad v_{1}=\prod_{i=1}^{n}\prod_{j=1}^{s_{i}}
\rD(r_{i},k_{r_{i},j})^{l_{r_{i},j}-\delta_{i,n}\delta_{j,s_{n}}}\rD(r_{n}-1,k+k_{r_{n},s_{n}})v_{\varphi}.\]
(3) there exists $N_{u}\in\Z$ such that if $k>N_{u}$, then
\[\mathcal{T}_{\mathrm{set}}(v_{s})\nsubseteq\mathcal{T}_{\mathrm{set}}(u)
\quad\text{for all}\ s=1,\dots,p.\]
\end{lemma}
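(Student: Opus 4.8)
The plan is to compute $\bigl(\rD(-1,k)-\varphi(t^{k})\bigr).u$ directly using the analogue of Lemma \ref{direct-calculate} for the pair $(\fg,\fg_{-})$. Since $\rD(-1,k)\in\fg_{-}$ and $v_\varphi$ is a Whittaker vector of type $\varphi$, moving $\rD(-1,k)-\varphi(t^k)$ past the product $\prod_{i,j}\rD(r_i,k_{r_i,j})^{l_{r_i,j}}$ produces exactly a sum of terms obtained by replacing one factor $\rD(r_i,k_{r_i,j})$ by the commutator $[\rD(-1,k),\rD(r_i,k_{r_i,j})]=(r_i+1)\rD(r_i-1,k+k_{r_i,j})$, while the tail term $\bigl(\rD(-1,k)-\varphi(t^k)\bigr).v_\varphi=0$ kills the ``no-commutator'' contribution. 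Because $r_n>0$, every $r_i>0$, so each commutator coefficient $r_i+1$ is nonzero and each $\rD(r_i-1,\,\cdot\,)$ still lies in $\fg_{\geq 0}$; in particular no term collapses and all resulting vectors remain in the span of $B(\fg_{-},\varphi)$ after reordering.

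For part (1) and part (2), I would argue that among all the terms produced, the one of maximal $\succ$-rank comes from lowering the \emph{lowest} generator layer $\fg_{r_n}$ at its \emph{smallest} exponent slot $k_{r_n,s_n}$: lowering $\rD(r_n,k_{r_n,s_n})$ to $\rD(r_n-1,k+k_{r_n,s_n})$ decreases $\mathrm{lth}_{r_n}$ by one but, since $r_n-1<r_n\le r_{n-1}<\cdots<r_1$, it keeps $\mathrm{lth}$ unchanged and strictly increases $\mathcal{D}(u)$ in the lexicographic order (the new layer $r_n-1$ is appended at the bottom, so $\mathcal{D}$ changes only in its last coordinate, from $r_n$ to $r_n-1$ — wait, this \emph{decreases} $\mathcal D$; so I must instead note that the dominant term is the one keeping $\mathcal D$ maximal, i.e. lowering a generator in the \emph{top} layer is dominated, and the correct leading term is governed by $\mathcal T$). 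More carefully: every term has $\mathrm{lth}=\mathrm{lth}(u)$; the terms where $\mathcal D$ is largest are those that lower the generator of the \emph{highest} layer $r_1$, unless $r_1=r_n$ (single layer). One then has to track, layer by layer, which replacement maximizes first $\mathcal D$ and then $\mathcal T$. The multiplicity of the leading vector is the number of ways to produce it, which gives the coefficient $(r_n+1)l_{r_n,s_n}$ once the bookkeeping of equal $k$'s inside a slot is done; distinctness of $v_1$ forces that the leading term is not cancelled, giving (1). This combinatorial identification of the $\succ$-leading term — and verifying that no other commutator produces the same basis vector — is the main obstacle, and it is essentially the same computation underpinning Proposition \ref{ele-lemma}(2)(i), adapted from layer $N-1$ down from $-1$ to the layer $r_n-1$ down from $r_n$.

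For part (3), I would observe that each vector $v_s$ appearing in $\bigl(\rD(-1,k)-\varphi(t^k)\bigr).u$ has its $\mathcal{T}_{\mathrm{set}}$ equal to $\mathcal{T}_{\mathrm{set}}(u)$ with one element $k_{r_i,j}$ possibly removed and the element $k+k_{r_i,j}$ adjoined; since $\mathcal{T}_{\mathrm{set}}(u)$ is a fixed finite subset of $\Z$, choosing $N_u=\max\mathcal{T}_{\mathrm{set}}(u)-\min\mathcal{T}_{\mathrm{set}}(u)$ guarantees that for $k>N_u$ the new entry $k+k_{r_i,j}\ge k+\min\mathcal T_{\mathrm{set}}(u)>\max\mathcal T_{\mathrm{set}}(u)$, so it cannot lie in $\mathcal{T}_{\mathrm{set}}(u)$, whence $\mathcal{T}_{\mathrm{set}}(v_s)\not\subseteq\mathcal{T}_{\mathrm{set}}(u)$ for all $s$. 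This part is routine once the description of the $v_s$ from the commutator computation is in hand.
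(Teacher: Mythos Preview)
Your overall strategy---compute via the Leibniz rule and identify the $\succ$-leading term---is exactly the paper's (the paper simply declares (2) ``clear'' and derives (1) from it). But your execution of (2) contains a reversed inequality, and your description in (3) ignores the reordering step.

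\textbf{Part (2).} You begin correctly by asserting that the leading term arises from lowering at layer $r_n$, slot $s_n$, but then talk yourself out of it and write ``the terms where $\mathcal D$ is largest are those that lower the generator of the highest layer $r_1$''. This is backwards. Every single-commutator term has $\mathrm{lth}=\mathrm{lth}(u)$, and lowering a factor in layer $r_i$ replaces one entry $r_i$ of $\mathcal D(u)$ by $r_i-1$. In the lexicographic order on $\Z^{\mathrm{lth}(u)}$, decreasing an \emph{earlier} coordinate is worse than decreasing a \emph{later} one: lowering from $r_1$ alters $\mathcal D$ already at position $\mathrm{lth}_{r_1}(u)$, whereas lowering from $r_n$ alters only the very last coordinate. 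Hence lowering from $r_n$ yields the \emph{largest} $\mathcal D$ among all commutator terms (even though every such $\mathcal D$ is $\prec\mathcal D(u)$), and among those, removing the smallest second index $k_{r_n,s_n}$ keeps $\mathcal T$ maximal. That term is already in PBW order (the new factor $\rD(r_n-1,\,\cdot\,)$ sits at the far right), so no reordering correction occurs and no other commutator reproduces it; the $l_{r_n,s_n}$ identical factors account for the multiplicity. Your first instinct was right; the mid-paragraph ``correction'' is the error.

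\textbf{Part (3).} Your description of $\mathcal T_{\mathrm{set}}(v_s)$ as ``$\mathcal T_{\mathrm{set}}(u)$ with one element possibly removed and $k+k_{r_i,j}$ adjoined'' is valid only for the first-level commutator terms \emph{before} they are rewritten in $B(\fg_-,\varphi)$. Moving $\rD(r_i-1,k+k_{r_i,j})$ rightward past the remaining $\rD(r_i,\,\cdot\,)$ factors produces further commutators whose second indices have the form $k+\sum_{i,j}a_{r_i,j}k_{r_i,j}$ with $0\le a_{r_i,j}\le l_{r_i,j}$; this is precisely what the paper's proof of (3) records. Consequently your explicit bound $N_u=\max\mathcal T_{\mathrm{set}}(u)-\min\mathcal T_{\mathrm{set}}(u)$ can be too small: you need $N_u$ large enough that $k+\sum_{i,j}a_{r_i,j}k_{r_i,j}>\max\mathcal T_{\mathrm{set}}(u)$ for every admissible tuple $(a_{r_i,j})$, for instance $N_u=\max\mathcal T_{\mathrm{set}}(u)-\mathrm{lth}(u)\cdot\min\bigl(0,\min\mathcal T_{\mathrm{set}}(u)\bigr)$. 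The idea is right, but the bookkeeping has to acknowledge the iterated commutators.
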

\begin{proof}
$(2)$ is clear, and $(1)$ follows from $(2)$. For $(3)$, we only need to
notice that, for any $v_{s}$ ($1\leq s\leq p$), there exist some $0\leq a_{r_{i},j}\leq l_{r_{i},j}$,
$i=1,\dots,n$, $j=1,\dots,s_{i}$, such that
$k+\sum_{i=1}^{n}\sum_{j=1}^{s_{i}}a_{r_{i},j}k_{r_{i},j}\in \mathcal{T}_{\mathrm{set}}(v_{s})$.
\end{proof}

\begin{lemma}\label{=0-case}
Let $u\in B(\fg_{-},\varphi)$ be as in \eqref{Topical-vector--}.
Suppose $\mathrm{lth}(u)\geq1$ and $r_{n}=0$. Then we have\\
(1) $\big(\rD(-1,k)-\psi(t^{k})\big).u=u_{1}+u_{2}$, $k\in\Z$,
where \[u_{1}=\Big[\rD(-1,k),\prod_{i=1}^{n-1}\prod_{j=1}^{s_{i}}
\rD(r_{i},k_{r_{i},j})^{l_{r_{i},j}}\Big]\prod_{j=1}^{s_{n}}
\rD(r_{n},k_{r_{n},j})^{l_{r_{n},j}}v_{\varphi}\]
and
$u_{2}=\prod_{i=1}^{n-1}\prod_{j=1}^{s_{i}}
\rD(r_{i},k_{r_{i},j})^{l_{r_{i},j}}\Big[\rD(-1,k),\prod_{j=1}^{s_{n}}
\rD(r_{n},k_{r_{n},j})^{l_{r_{n},j}}\Big]v_{\varphi}$.\\
(2) Suppose $n\geq2$. Write $u_{1}=\sum_{r=1}^{p}a_{r}v_{r}$, $u_{2}=\sum_{m=1}^{q}b_{m}w_{m}$,
where $p,q\in\N$, $a_{1},\dots,a_{p},b_{1},\dots,b_{q}\in\C^{\times}$,
$v_{1},\dots,v_{p}\in B(\fg_{-},\varphi)$ are distinct elements such that $v_{1}\succ\cdots\succ v_{p}$,
and $w_{1},\dots,w_{q}\in B(\fg_{-},\varphi)$ are distinct elements
such that $w_{1}\succ\dots\succ w_{q}$. Then we have
\begin{enumerate}
\item[(i)] $u_{1}\neq0$, $a_{1}=l_{r_{n-1},s_{n-1}}(r_{n-1}+1)$, and
\[v_{1}=\prod_{i=1}^{n-1}\prod_{j=1}^{s_{i}}
\rD(r_{i},k_{r_{i},j})^{l_{r_{i},j}-\delta_{i,n-1}\delta_{j,s_{n-1}}}
\cdot \rD(r_{n-1}-1,k+k_{r_{n-1},s_{n-1}})\prod_{j=1}^{s_{n}}
\rD(r_{n},k_{r_{n},j})^{l_{r_{n},j}}v_{\varphi}.\]
\item[(ii)] $\mathrm{lth}_{0}(w_{m})<\mathrm{lth}_{0}(u)\leq\mathrm{lth}_{0}(v_{r})$ for
$m=1,\dots,q$ and $r=1,\dots,p$.
\item[(iii)] For $m=1,\dots,q$ and $k\in\Z$,
\[v_{1}\succ w_{m},\ \mathrm{lth}(w_{m})\leq \mathrm{lth}(u)-1,\quad\text{and}\quad
\mathcal{T}_{\mathrm{set}}(w_{m})\subseteq\mathcal{T}_{\mathrm{set}}(u).\]
\item[(iv)] There exists $N_{u}\in\Z$ such that if $k>N_{u}$, then
$\mathcal{T}_{\mathrm{set}}(v_{r})\nsubseteq\mathcal{T}_{\mathrm{set}}(u)$ for all $r=1,\dots,p$.
\end{enumerate}

\end{lemma}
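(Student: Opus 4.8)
The plan is to reduce everything to the bracket identity in part (1), which follows immediately from the Leibniz rule for the adjoint action of $\rD(-1,k)$ on the monomial $u$: writing $u = X Y v_\varphi$ with $X = \prod_{i=1}^{n-1}\prod_{j=1}^{s_i}\rD(r_i,k_{r_i,j})^{l_{r_i,j}}$ and $Y = \prod_{j=1}^{s_n}\rD(r_n,k_{r_n,j})^{l_{r_n,j}}$, and using $\rD(-1,k)v_\varphi = \varphi(t^k)v_\varphi$, one gets $\big(\rD(-1,k)-\varphi(t^k)\big).u = [\rD(-1,k),XY]v_\varphi = [\rD(-1,k),X]Yv_\varphi + X[\rD(-1,k),Y]v_\varphi = u_1 + u_2$. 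Note that $r_n = 0$, so $[\rD(-1,k),\rD(0,l)] = (0-(-1))\rD(-1,k+l) = \rD(-1,k+l)$; hence every term arising in $u_2$ has had one factor $\rD(0,\cdot)$ converted into a factor $\rD(-1,\cdot)$, lowering $\mathrm{lth}_0$ by one, whereas in $u_1$ the bracket acts only on factors $\rD(r_i,\cdot)$ with $r_i \geq 1$, so $\mathrm{lth}_0$ is unchanged. This is the mechanism behind (ii).

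For part (2), I would expand $[\rD(-1,k),X]$ by the Leibniz rule across the $\sum_{i<n}\sum_j l_{r_i,j}$ factors of $X$: each summand replaces one factor $\rD(r_i,k_{r_i,j})$ by $(r_i+1)\rD(r_i-1,k+k_{r_i,j})$. To identify the principal term $v_1$ and its coefficient $a_1$, observe that among these summands the one that is largest in the principal order $\succ$ on $B(\fg_-,\varphi)$ is obtained by hitting the factor with the \emph{smallest} available $r_i$ (namely $r_{n-1}$, since this keeps as many high-degree $\rD(r_i,\cdot)$'s as possible and trades $r_{n-1}$ for $r_{n-1}-1$ — one checks $\mathcal D$ of this term dominates) and, among those, the factor with the smallest shift $k_{r_{n-1},s_{n-1}}$ (which, after re-sorting, maximizes $\mathcal T$); the multiplicity of that factor contributes $l_{r_{n-1},s_{n-1}}$ and the bracket contributes $(r_{n-1}+1)$, giving $a_1 = l_{r_{n-1},s_{n-1}}(r_{n-1}+1)$ and the stated $v_1$, after noting that moving $\rD(r_{n-1}-1,k+k_{r_{n-1},s_{n-1}})$ into PBW position only produces strictly $\prec$ corrections (Lemma \ref{direct-calculate}-type bookkeeping). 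In particular $a_1 \neq 0$, so $u_1 \neq 0$, which is (i). Statement (ii) is then the observation recorded above about $\mathrm{lth}_0$; every $v_r$ appearing in $u_1$ has $\mathrm{lth}_0(v_r) = \mathrm{lth}_0(u)$ and every $w_m$ in $u_2$ has $\mathrm{lth}_0(w_m) = \mathrm{lth}_0(u)-1$.

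For (iii), since each $w_m$ comes from $X[\rD(-1,k),Y]v_\varphi$ and the bracket only rewrites $\rD(0,k_{r_n,j})$'s into $\rD(-1,\cdot)$'s and moves them into position, we have $\mathrm{lth}(w_m) = \mathrm{lth}(u)-1$ (the bracket is degree-preserving in total length but here actually one commutator term with $v_\varphi$ is absorbed — more precisely $\mathrm{lth}(w_m)\le \mathrm{lth}(u)-1$ holds because the $\rD(-1,\cdot)$ produced, together with $v_\varphi$, collapses via $\rD(-1,\cdot)v_\varphi = \varphi(\cdot)v_\varphi$), and $\mathcal T_{\mathrm{set}}(w_m)\subseteq \mathcal T_{\mathrm{set}}(u)$ because no new exponents are introduced — the only shifts appearing are sums $k + (\text{old }k_{r_n,j})$ that then hit $v_\varphi$ and disappear into scalars $\varphi(t^{k+k_{r_n,j}})$, while the surviving $\rD$-factors retain their original $\mathcal T$-labels. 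Comparing with $v_1$, which has $\mathrm{lth}_0(v_1) = \mathrm{lth}_0(u) > \mathrm{lth}_0(w_m)$, and recalling that $\mathrm{lth}_0$ is read off from $\mathcal D$, we get $v_1 \succ w_m$. Finally (iv) is a cofinality argument exactly as in Lemma \ref{>0-case}(3): each $v_r$ in $u_1$ has some factor $\rD(r_{n-1}-1, k + k_{r_{n-1},j})$ (or more generally $\rD(r_i-1, k+k_{r_i,j})$), so $k + k_{r_i,j} \in \mathcal T_{\mathrm{set}}(v_r)$; since $\mathcal T_{\mathrm{set}}(u)$ is a fixed finite set, choosing $N_u = \max\mathcal T_{\mathrm{set}}(u) - \min\{k_{r_i,j}\}$ forces $\mathcal T_{\mathrm{set}}(v_r) \nsubseteq \mathcal T_{\mathrm{set}}(u)$ for $k > N_u$. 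The main obstacle is the careful PBW bookkeeping in (i): verifying that the candidate $v_1$ is genuinely the $\succ$-maximal basis element occurring, i.e. that reordering commutators and the Leibniz expansion over $X$ cannot accidentally produce a larger term — this is where one must invoke the length/degree/shift stratification of $\succ$ and the monotonicity from Lemma \ref{direct-calculate}.
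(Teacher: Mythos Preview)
Your approach is exactly the paper's (which dismisses (1) and (2)(i)--(iii) as ``clear'' once one notes $r_n=0$, and refers back to Lemma~\ref{>0-case}(3) for (iv)), and the structure is sound. Two small slips are worth correcting. First, your claim that $\mathrm{lth}_0(v_r)=\mathrm{lth}_0(u)$ and $\mathrm{lth}_0(w_m)=\mathrm{lth}_0(u)-1$ is too strong: when $r_{n-1}=1$ the bracket converts a $\rD(1,\cdot)$ into a $\rD(0,\cdot)$, so some $v_r$ have $\mathrm{lth}_0(v_r)=\mathrm{lth}_0(u)+1$; and in $u_2$ the commutators $[\rD(-1,a),\rD(0,b)]=\rD(-1,a+b)$ can absorb further $\rD(0,\cdot)$ factors, giving some $w_m$ with $\mathrm{lth}_0(w_m)<\mathrm{lth}_0(u)-1$. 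The lemma only asserts the inequalities $\mathrm{lth}_0(w_m)<\mathrm{lth}_0(u)\le\mathrm{lth}_0(v_r)$, and your mechanism (the $Y$ block is untouched in $u_1$, while one $\rD(0,\cdot)$ is always consumed in $u_2$) does deliver those. Second, your justification of $v_1\succ w_m$ via $\mathrm{lth}_0$ is not how $\succ$ is defined --- the order compares total $\mathrm{lth}$ first, and a larger $\mathrm{lth}_0$ does not by itself force $\succ$. The correct (and simpler) argument is already in your hands: $\mathrm{lth}(v_1)=\mathrm{lth}(u)>\mathrm{lth}(u)-1\ge\mathrm{lth}(w_m)$.
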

\begin{proof}
$(1)$ and (i) of $(2)$ are clear. For (ii) and (iii) of $(2)$,
we only need to note that $r_{n}=0$. For (iv) of $(2)$, we can
prove it similar to that of Lemma \ref{>0-case}(3).
\end{proof}

Now we characterize the Whittaker vector in $W(\fg_{-},\varphi)$, the following result is essential.

\begin{proposition}\label{reduce-to-0}
If there exists a vector
\[u=\sum_{m=1}^{p}a_{m}v_{m}\in W(\fg_{-},\varphi)_{\varphi}\setminus\C v_{\varphi},\]
where $a_{1},\dots,a_{p}\in\C^{\times}$ and $v_{1},\dots,v_{p}\in B(\fg_{-},\varphi)$ are distinct elements,
then we have
\[\text{either}\ v_{m}=v_{\varphi}\ \text{or}\ \mathcal{D}_{\mathrm{set}}(v_{i})=\{0\}\]
for $m=1,\dots,p$.
\end{proposition}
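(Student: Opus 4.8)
The plan is to argue by contradiction, mirroring the strategy of Proposition~\ref{some-property-Whit} but now pushing from the top index downward. Suppose some $v_m$ with $v_m\neq v_\varphi$ has $\mathcal{D}_{\mathrm{set}}(v_m)\neq\{0\}$, i.e.\ $\max\bigcup_{m=1}^{p}\mathcal{D}_{\mathrm{set}}(v_m)=:R>0$. Among all the $v_m$ whose top index equals $R$, collect those for which $\mathrm{lth}_R(v_m)$ is maximal, then among those pick out the $\succ$-largest basis element $v_{1}$ (after relabeling). The idea is to apply $\rD(-1,k)-\varphi(t^k)$ to $u$ for suitably chosen large $k$ and show the coefficient of the leading term produced from $v_{1}$ cannot be cancelled, contradicting $u\in W(\fg_{-},\varphi)_\varphi$.

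First I would record, via Lemma~\ref{>0-case}, exactly what $\rD(-1,k)-\varphi(t^k)$ does to a basis vector $w$ with $r_n(w)>0$: it strictly lowers $R$ only in a controlled way, the leading term being obtained by replacing one factor $\rD(r_n,k_{r_n,s_n})$ by $\rD(r_n-1,k+k_{r_n,s_n})$ with explicit nonzero coefficient $(r_n+1)l_{r_n,s_n}$, and moreover for $k$ large enough the $\mathcal{T}_{\mathrm{set}}$ of every resulting term is \emph{not} contained in $\mathcal{T}_{\mathrm{set}}(w)$ (part (3)). For basis vectors $w$ with $r_n(w)=0$ I would instead invoke Lemma~\ref{=0-case}: the operator lowers $\mathrm{lth}_0$ or, in the cross-term $u_1$, produces a vector with top index $r_{n-1}-1$ but with $\mathrm{lth}_0$ strictly \emph{larger} than $\mathrm{lth}_0(u)$ — a term of a shape that cannot interfere with the leading term coming from $v_1$.

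Next I would isolate the leading term. Applying $\rD(-1,k)-\varphi(t^k)$ to $u=\sum a_m v_m$, the $\succ$-largest basis element that can appear is exactly $v_1^{+}$, the vector obtained from $v_1$ by the leading substitution in Lemma~\ref{>0-case}(2) (because $R>0$, the terms of shape $\rD(R,\cdot)$ dominate those of shape $\rD(<R,\cdot)$, and among index-$R$ data $v_1$ was chosen $\succ$-maximal with maximal $\mathrm{lth}_R$). The only other $v_m$ that could contribute to the coefficient of $v_1^{+}$ are those sharing all of $v_1$'s higher-index structure and differing only in the bottom $\rD(R,\cdot)$ block, so the total coefficient of $v_1^{+}$ is of the form $\bigl((\text{nonzero Laurent polynomial}).\varphi\bigr)(t^k)$ for a fixed nonzero $f(t)\in\mathcal{A}$ built from the relevant $a_m$'s and the shift exponents. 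Since $u$ is a Whittaker vector this coefficient must vanish for all $k\in\Z$, forcing $f(t).\varphi=0$; by Lemma~\ref{lem:charCE} this gives $\varphi\in\mathcal{E}$ — which in this section is allowed, so this alone is not yet a contradiction. The fix is the genericity in $k$: by Lemma~\ref{>0-case}(3) choose $k$ large enough that $k+k_{R,s}\notin\mathcal{T}_{\mathrm{set}}(v_m)$ for every $m$ with top index $<R$ and for the lower-order terms produced by the index-$R$ vectors themselves, so that $v_1^{+}$ genuinely cannot be cancelled regardless of whether $\varphi\in\mathcal{E}$. This yields $\bigl(\rD(-1,k)-\varphi(t^k)\bigr).u\neq0$, the desired contradiction; hence $R=0$, i.e.\ $\mathcal{D}_{\mathrm{set}}(v_m)\subseteq\{0\}$ for every $m$ with $v_m\neq v_\varphi$, which is the claim (the only index-$0$ vectors have $\mathcal{D}_{\mathrm{set}}=\{0\}$).

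The main obstacle is the bookkeeping of cancellations: one must be certain that no $v_m$ of lower complexity, after hitting it with $\rD(-1,k)-\varphi(t^k)$, can produce a basis element equal to $v_1^{+}$, and that the cross-terms of type $u_1$ in Lemma~\ref{=0-case} (which raise $\mathrm{lth}_0$) land in a region of the basis disjoint from $v_1^{+}$. Both are handled by a careful use of the principal total order together with the $\mathcal{T}_{\mathrm{set}}$-genericity in $k$, exactly as in Propositions~\ref{EXP-2N} and~\ref{some-property-Whit}; I expect the argument to go through by induction on $\mathrm{lth}(u)$ if the direct order-comparison becomes unwieldy.
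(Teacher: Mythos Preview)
Your proposal contains the right ingredients---Lemmas~\ref{>0-case} and~\ref{=0-case}, the $\mathcal{T}_{\mathrm{set}}$-genericity in $k$, and the crucial observation that one cannot simply invoke $\varphi\notin\mathcal{E}$ in this proposition---but the organizing principle has a real gap. You stratify the $v_m$ by their \emph{top} index $R=\max\mathcal{D}_{\mathrm{set}}$ and by $\mathrm{lth}_R$, and then invoke Lemma~\ref{>0-case}(2) to produce $v_1^{+}$. But Lemma~\ref{>0-case} is stated for vectors whose \emph{bottom} index $r_n$ is positive; your selection criterion gives no control over $r_n(v_1)$, so the lemma may not apply to $v_1$ at all. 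Relatedly, the assertion that $v_1^{+}$ is the $\succ$-largest element in the image of $u$ is false in general: another $v_m$ of much greater total length (hence $v_m\succ v_1$) but with smaller top index will typically produce output terms $\succ v_1^{+}$. Finally, your genericity condition ``$k+k_{R,s}\notin\mathcal{T}_{\mathrm{set}}(v_m)$'' refers to a level-$R$ exponent, whereas the leading substitution in both lemmas acts at the bottom (or second-from-bottom) level, so the fresh index is $k+k_{r_n,s_n}$ or $k+k_{r_{n-1},s_{n-1}}$, not $k+k_{R,s}$.

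The paper avoids all of this by organizing around the bottom index, in two passes that match the two lemmas exactly. Claim~1 shows every $v_m\neq v_\varphi$ must contain a level-$0$ factor: one isolates the $v_m$ with $r_n>0$ (so Lemma~\ref{>0-case} applies legitimately), takes the $\succ$-largest among \emph{those}, and compares the resulting leading term $\omega_1$ against the output of the remaining $v_m$ (all with $r_n=0$) via conditions (d1)--(d2) extracted from Lemma~\ref{=0-case}; the fresh index at level $r_n-1$ in $\omega_1$ then rules out cancellation. Claim~2 assumes $0\in\mathcal{D}_{\mathrm{set}}(v_m)$ for all $m$ and, among those with $\mathcal{D}_{\mathrm{set}}\neq\{0\}$, picks the $\succ$-largest; Lemma~\ref{=0-case}(2)(i) now applies uniformly, and the fresh index in $\omega_2$ sits at level $r_{n-1}-1$, which cannot be matched by the output of the pure level-$0$ vectors. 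Reworking your argument along this bottom-index stratification closes the gap cleanly.
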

\begin{proof}Without loss of generality, we assume $v_{m}\neq v_{\varphi}$ for all $m=1,\dots,p$.
Since $u\in W(\fg_{-},\varphi)_{\varphi}$, we know that
$\big(\rD(-1,k)-\varphi(t^{k})\big).u=0$ for all $k\in\Z$.

\noindent {\bf Claim 1}
\qquad $\{0\}\subset\mathcal{D}_{\mathrm{set}}(v_{m})\quad\text{for all}\ m=1,\dots,p$.

\noindent Suppose to the contrary that there exists some $v_{m_{0}}$ such that
$\{0\}\not\subset\mathcal{D}_{\mathrm{set}}(v_{m_{0}})$, where $1\leq m_{0}\leq p$. We assume that
\[\{0\}\subset\mathcal{D}_{\mathrm{set}}(v_{m})\ \text{for all}\ 1\leq m< m_{0}\ \text{and}\
\{0\}\not\subset\mathcal{D}_{\mathrm{set}}(v_{j})\ \text{for all}\ m_{0}\leq j\leq p.\]
We may further assume that $v_{m_{0}}\succ\cdots\succ v_{p}$.
Set \[v_{m_{0}}=\prod_{i=1}^{n}\prod_{j=1}^{s_{i}}
\rD(r_{i},k_{r_{i},j})^{l_{r_{i},j}}v_{\varphi}\in B(\fg_{-},\varphi).\]
Then $1\leq r_{n}<\cdots<r_{2}<r_{1}$. It follows from Lemma \ref{>0-case} that we have
\begin{eqnarray}\label{Whit-varphi-1}\big(\rD(-1,k)-\varphi(t^{k})\big).\sum_{m=m_{0}}^{p}a_{m}v_{m}
=(r_{n}+1)l_{r_{n},s_{n}}\omega_{1}+\sum_{j=1}^{p'}b_{j}'w_{j},
\end{eqnarray}
where $b_{1}',\dots,b_{p'}'\in\C^{\times}$, $w_{1},\dots,w_{p'}\in B(\fg_{-},\varphi)$
with $\omega_{1}\succ w_{j}$ for $j=1,2,\dots,p'$, and
\begin{eqnarray*}\omega_{1}:=\prod_{i=1}^{n}\prod_{j=1}^{s_{i}}
\rD(r_{i},k_{r_{i},j})^{l_{r_{i},j}-\delta_{i,n}\delta_{j,s_{n}}}\rD(r_{n}-1,k+k_{r_{n},s_{n}})v_{\varphi}.
\end{eqnarray*}
Again from Lemma \ref{>0-case}, for the parameters in \eqref{Whit-varphi-1}, we know that
\begin{enumerate}
\item[(c1)] there exists $N_{u}\in\Z$ such that
$\{k+k_{r_{n},s_{n}}\}\subseteq\mathcal{T}_{\mathrm{set}}(\omega_{1})
\nsubseteq\bigcup_{m=1}^{p}\mathcal{T}_{\mathrm{set}}(v_{m})$ for all $k>N_{u}$,
\item[(c2)] $\mathrm{lth}_{0}(\omega_{1})\leq1$ and $\mathrm{lth}_{0}(\omega_{1})=1$ if and only if $r_{n}=1$.
Moreover, when $\mathrm{lth}_{0}(\omega_{1})=1$, we have
$\{k+k_{r_{n},s_{n}}\}=\mathcal{T}_{0,\mathrm{set}}(\omega_{1})$.
\end{enumerate}

\vspace{3mm}

Note that $\{0\}\subset\mathcal{D}_{\mathrm{set}}(v_{m})\ \text{for all}\ 1\leq m< m_{0}$.
Now we consider the general case. For any $\nu\in B(\fg_{-},\varphi)$ with $\{0\}\subset\mathcal{D}_{\mathrm{set}}(\nu)$,
we consider
$\big(\rD(-1,k)-\varphi(t^{k})\big).\nu$ for $k>N_{u}$. Set \[\nu=\prod_{i=1}^{Q}\prod_{j=1}^{N_{i}}
\rD(p_{i},m_{p_{i},j})^{q_{p_{i},j}}v_{\varphi}\]
with $0=p_{Q}<\cdots<p_{2}<p_{1}$.
Write
\begin{eqnarray}\label{Whit-varphi-2}
\big(\rD(-1,k)-\varphi(t^{k})\big).\nu=\sum_{l=1}^{q}b_{l}y_{l},\quad k>N_{u},
\end{eqnarray}
where $b_{1},b_{2},\dots,b_{l}\in\C^{\times}$ and $y_{1},y_{2},\dots,y_{l}\in B(\fg_{-},\varphi)$ are distinct elements.
From Lemma \ref{=0-case}, we see that the parameters in \eqref{Whit-varphi-2} satisfy:
\begin{enumerate}
\item[(d1)] if $\mathrm{lth}_{0}(y_l)=0$ for some $1\leq l\leq q$, then
$\mathcal{T}_{\mathrm{set}}(y_{l})\subseteq\mathcal{T}_{\mathrm{set}}(\nu)$;
\item[(d2)] if $\mathrm{lth}_{0}(y_{l})=1$ for some $1\leq l\leq q$, then
$\mathcal{T}_{0,\mathrm{set}}(y_{l})\subseteq\mathcal{T}_{0,\mathrm{set}}(\nu)$.
\end{enumerate}
By compare $(c1)-(c2)$ with $(d1)-(d2)$, it is clear that $y_{l}\neq\omega_{1}$ for all $l=1,\dots,q$.

Thus
\[\big(\rD(-1,k)-\varphi(t^{k})\big).u\neq0\quad\text{for all}\ k>N_{u},\]
which contradicts to the fact that $u$ is a non-zero Whittaker vector. Then we get Claim 1.

\vspace{3mm}

\noindent {\bf Claim 2}
\qquad $\mathcal{D}_{\mathrm{set}}(v_{m})=\{0\}\ \text{for all}\ m=1,\dots,p$.

\noindent Suppose to the contrary that there exists some $1\leq j_{0}\leq p$ such that
$\mathcal{D}_{\mathrm{set}}(v_{j_{0}})\neq\{0\}$. Without loss of generality,
we assume that $\mathcal{D}_{\mathrm{set}}(v_{m})\neq\{0\}$ for $m=1,\dots,j_{0}$ and
$\mathcal{D}_{\mathrm{set}}(v_{j})=\{0\}$ for all $j=j_{0}+1,\dots,p$. We may further assume
that $v_{1}\succ\cdots\succ v_{j_{0}}$.

 Set
\[v_{1}=\prod_{i=1}^{n}\prod_{j=1}^{s_{i}}
\rD(r_{i},k_{r_{i},j})^{l_{r_{i},j}}v_{\varphi}\in B(\fg_{-},\varphi),\]
where $n\geq2$ and $0=r_{n}<\cdots<r_{2}<r_{1}$. It follows from Lemma \ref{=0-case}(2)
that we have
\[\big(\rD(-1,k)-\varphi(t^{k})\big).\sum_{m=1}^{j_{0}}a_{m}v_{m}=c_{0}\omega_{2}+\sum_{j=1}^{P}c_{j}\mu_{j},\quad k\in\Z,\]
where $c_{0}:=a_{1}l_{r_{n-1},s_{n-1}}(r_{n-1}+1)$, $c_{1},\dots,c_{P}\in\C^{\times}$,
\begin{eqnarray*}\omega_{2}&:=&\prod_{i=1}^{n-1}\prod_{j=1}^{s_{i}}
\rD(r_{i},k_{r_{i},j})^{l_{r_{i},j}-\delta_{i,n-1}\delta_{j,s_{n-1}}}\\ &&
\cdot \rD(r_{n-1}-1,k+k_{r_{n-1},s_{n-1}})\prod_{j=1}^{s_{n}}
\rD(r_{n},k_{r_{n},j})^{l_{r_{n},j}}v_{\varphi}\in B(\fg_{-},\varphi),\end{eqnarray*}
and $\mu_{1},\dots,\mu_{P}\in B(\fg_{-},\varphi)$ are distinct elements such that
$\omega_{2}\succ \mu_{j}$ for all $j=1,\dots,P$.
We note that there exists $N_{v_{1}}\in\Z$ such that
$\mathcal{T}_{r_{n-1}-1,\mathrm{set}}(\omega_{2})\not\subset\bigcup_{m=1}^{p}\mathcal{T}_{\mathrm{set}}(v_{m})$
for all $k>N_{v_{1}}$.
But if we write
$\big(\rD(-1,k)-\varphi(t^{k})\big).\sum_{m=j_{0}+1}^{p}a_{m}v_{m}=\sum_{j=1}^{q'}c_{j}'\mu_{j}'$,
where $c_{1}',\dots,c_{q'}'\in\C^{\times}$ and $\mu_{1}',\dots,\mu_{q'}'\in B(\fg_{-},\varphi)$ are distinct elements, we have
\[\mathcal{T}_{\mathrm{set}}(\mu_{j}')\subset\bigcup_{m=1}^{p}\mathcal{T}_{\mathrm{set}}(v_{m})\quad\text{for all}\ j=
1,\dots,q'.\]
This implies that
$\big(\rD(-1,k)-\varphi(t^{k})\big).u\neq0$ for all sufficiently large integers $k$,
a contradiction. This completes the proof.
\end{proof}

In the following, we assume that $\varphi\notin\mathcal{E}$.
We aim to prove that the universal Whittaker module $W(\fg_{-},\varphi)$ is simple, which is implied by
the following result by Lemma \ref{ALZ}.
\begin{proposition}\label{var-not-exp}
If $\varphi\notin\mathcal{E}$, then $W(\fg_{-},\varphi)_{\varphi}=\C v_{\varphi}$.
\end{proposition}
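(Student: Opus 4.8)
The plan is to argue by contradiction along the same lines as the proof of Proposition \ref{reduce-to-0}. Suppose $W(\fg_{-},\varphi)_{\varphi}\neq\C v_{\varphi}$; then there is a nonzero Whittaker vector
\[u=\sum_{m=1}^{p}a_{m}v_{m}\in W(\fg_{-},\varphi)_{\varphi}\setminus\C v_{\varphi},\]
with $a_{m}\in\C^{\times}$ and $v_{m}\in B(\fg_{-},\varphi)$ distinct. By Proposition \ref{reduce-to-0} we may assume each $v_{m}\neq v_{\varphi}$ satisfies $\mathcal{D}_{\mathrm{set}}(v_{m})=\{0\}$, i.e. each $v_{m}$ is of the form $\prod_{j}\rD(0,k_{m,j})^{l_{m,j}}v_{\varphi}$ with only $\rd_{0}$-factors. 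First I would extract from the $v_m$ the minimal data governing the leading behaviour: let $\iota=\max_{m}\mathrm{lth}_{0}(v_{m})$ and group the $v_m$ according to whether $\mathrm{lth}_{0}(v_m)$ equals $\iota$ or is smaller, ordering the top-layer terms by the principal order so that $v_1\succ v_2\succ\cdots\succ v_{p_0}$ are exactly those with $\mathrm{lth}_0(v_m)=\iota$. Write $v_1=\rD(0,k_1)^{l_1}\cdots\rD(0,k_s)^{l_s}v_{\varphi}$ with $k_1>\cdots>k_s$ and $l_1+\cdots+l_s=\iota$.

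Next I would apply the operator $\rD(-1,k)-\varphi(t^{k})$ to $u$ and track the coefficient of the ``leading'' target vector. Applying $\rD(-1,k)$ to $\rD(0,k_s)$ produces $[\rD(-1,k),\rD(0,k_s)]=\rD(-1,k+k_s)$, lowering the $\rd_0$-degree by one and creating a single $\rd_{-1}$-factor; by Lemma \ref{=0-case}(1) this is the only way to produce a target with an $\rd_{-1}$-factor and $\rd_0$-length exactly $\iota-1$. The distinguished target vector is
\[\omega=\rD(0,k_1)^{l_1}\cdots\rD(0,k_s)^{l_s-1}\rD(-1,k+k_s)v_{\varphi}.\]
Collecting contributions: $v_1$ contributes $a_1 l_s t^{k_s}$, and the other top-layer $v_m$ that differ from $v_1$ only in the smallest $\rd_0$-exponent contribute $a_m t^{k_{s_m}}$ (cf. the structure in Propositions \ref{EXP-2N}, \ref{EXP-2N-1}); the lower-layer terms and the $\varphi(t^k)$ correction contribute nothing to $\omega$ because $\omega$ carries the maximal $\rd_0$-length $\iota-1$ among vectors with one $\rd_{-1}$-factor, so no $v_m$ with $\mathrm{lth}_0(v_m)<\iota$ can feed into it, and $\varphi(t^k)u$ has no $\rd_{-1}$-factors. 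Since $\rD(-1,k)-\varphi(t^k)$ annihilates $u$, the coefficient of $\omega$ must vanish for every $k\in\Z$:
\[\Big(\big(a_1 l_s t^{k_s}+\textstyle\sum_{m} a_m t^{k_{s_m}}\big).\varphi\Big)(t^k)=0\quad\text{for all }k\in\Z,\]
hence $f(t).\varphi=0$ where $f(t)=a_1 l_s t^{k_s}+\sum_m a_m t^{k_{s_m}}\in\mathcal{A}$ is nonzero. By Lemma \ref{lem:charCE} this forces $\varphi\in\mathcal{E}$, contradicting the hypothesis.

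The main obstacle is the bookkeeping needed to be sure that $\omega$ really receives \emph{no} contribution other than the one displayed — in particular, that two different top-layer $v_m$'s cannot conspire (through different $[\rD(-1,k),\rD(0,\cdot)]$ brackets) to land on the same $\omega$ with a cancelling coefficient, and that after choosing $k$ large enough the target $\omega$ does not coincide with any image coming from the lower layers. This is exactly the role played by Lemma \ref{=0-case}(iii)--(iv) and the $\mathcal{T}_{\mathrm{set}}$ inclusions: for $k\gg 0$ the shifted exponent $k+k_s$ lies outside $\bigcup_m\mathcal{T}_{\mathrm{set}}(v_m)$, which both pins down $\omega$ uniquely as the $\succ$-leading term of $(\rD(-1,k)-\varphi(t^k)).u$ and rules out accidental coincidences. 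Once the uniqueness of the $\omega$-coefficient is established, the contradiction with $\varphi\notin\mathcal{E}$ via Lemma \ref{lem:charCE} is immediate, and combined with Proposition \ref{E-Reducible} and Lemma \ref{ALZ} this proves Theorem \ref{second-main-restlt}.
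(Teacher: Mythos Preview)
Your overall strategy coincides with the paper's: reduce to $\mathcal{D}_{\mathrm{set}}=\{0\}$ via Proposition~\ref{reduce-to-0}, apply $\rD(-1,k)-\varphi(t^{k})$, isolate the coefficient of a leading basis vector, and extract a nonzero Laurent polynomial annihilating $\varphi$. The displayed equation you arrive at is correct. But the reasoning you give for the isolation step is genuinely flawed.

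Your ``distinguished target'' $\omega=\rD(0,k_1)^{l_1}\cdots\rD(0,k_s)^{l_s-1}\rD(-1,k+k_s)v_{\varphi}$ is \emph{not} an element of the basis $B(\fg_{-},\varphi)$: since $v_\varphi$ is a Whittaker vector for $\fg_{-1}$, the factor $\rD(-1,k+k_s)$ acts on $v_\varphi$ by the scalar $\varphi(t^{k+k_s})$, so in the module $\omega=\varphi(t^{k+k_s})\,w_0$ where $w_0=\rD(0,k_1)^{l_1}\cdots\rD(0,k_s)^{l_s-1}v_{\varphi}\in B(\fg_-,\varphi)$. Consequently all of your bookkeeping based on ``vectors with one $\rd_{-1}$-factor'' is vacuous---there are no such basis vectors---and Lemma~\ref{=0-case}(2), which requires $n\geq 2$ (i.e.\ factors $\rD(r,\cdot)$ with $r>0$), does not apply here at all. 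The correct target is $w_0$, and the isolation argument is simpler than you suggest: for any $v_m$ with $\mathrm{lth}_0(v_m)=\iota'$, every bracket $[\rD(-1,k),\rD(0,m)]=\rD(-1,k+m)$ eventually hits $v_\varphi$ as a scalar, so $(\rD(-1,k)-\varphi(t^k)).v_m$ is a combination of basis vectors with $\mathrm{lth}_0\leq\iota'-1$. Hence lower-layer terms never reach $\mathrm{lth}_0(w_0)=\iota-1$, and among the top-layer $v_m$ only those of the form $w_0\cdot\rD(0,m)$ with $m\leq k_s$ can contribute to $w_0$ (any $m>k_s$ would force $v_m\succ v_1$). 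This works for \emph{every} $k\in\Z$; the ``$k\gg 0$'' device and the $\mathcal{T}_{\mathrm{set}}$ inclusions you invoke in the last paragraph are unnecessary and belong to the proof of Proposition~\ref{reduce-to-0}, not here. Once the target is corrected to $w_0$, the coefficient is exactly $(f(t).\varphi)(t^k)$ with $f$ nonzero, and the contradiction with $\varphi\notin\mathcal{E}$ follows as in the paper.
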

\begin{proof} Suppose to the contrary that $\C v_{\varphi}$ is a proper subset of $W(\fg_{-},\varphi)_{\varphi}$.
Let $u_{+}=\sum_{i=1}^{p}a_{i}v_{i}$ be a vector in $W(\fg_{-},\varphi)_{\varphi}\setminus\C v_{\varphi}$,
where $a_{1},a_{2},\dots,a_{p}\in\C^{\times}$ and $v_{1},v_{2},\dots,v_{p}\in B(\fg_{-},\varphi)$
such that $v_{1}\succ v_{2}\succ \cdots\succ v_{p}$.
It follows from Proposition \ref{reduce-to-0} that we have $\mathrm{lth}_{r}(v_{i})=0$ for $i=1,2,\dots,p$ and
$r\in\Z_{+}$. We write \[v_{1}=\prod_{i=1}^{r}\rD(0,k_{i,1})^{l_{i,1}}v_{\varphi},\]
where $l_{i,1}\in\Z_{+}$, $k_{i,1}\in\Z$ such that $k_{1,1}>k_{2,1}>\cdots>k_{r,1}$,
and $\sum_{i=1}^{r}l_{i,1}\geq 1$.
Then there exists $1\leq s\leq p$ such that $v_{s+1}\prec w$,
for any
\[w\in\Big\{\prod_{i=1}^{r-1}\rD(0,k_{i,1})^{l_{i,1}}\rD(0,k_{r,1})^{l_{r,1}-1}
\rD(0,k)v_{\varphi}\mid k\in\Z\ \text{such that}\ k<k_{r,1}\Big\}.\]
This implies that
$v_{j}=\prod_{i=1}^{r-1}\rD(0,k_{i,1})^{l_{i,1}}\rD(0,k_{r,1})^{l_{r,1}-1}\rD(0,k_{r,j})v_{\varphi}$
for $1\leq j\leq s$, and that $k_{r,1}>k_{r,2}>\cdots>k_{r,s}$.

For $k\in\Z$, we see that
$0=\big(\rD(-1,k)-\varphi(t^{k})\big).u_{+}=
c_{0}w_{0}+\sum_{j=1}^{m}c_{j}w_{j}$,
where \[c_{0}:=l_{r,1}\varphi(t^{k+k_{r,1}})+\varphi(t^{k+k_{r,2}})
+\cdots+\varphi(t^{k+k_{r,s}}),\ c_{1},\dots,c_{m}\in\C\]
and
$w_{0}:=\prod_{i=1}^{r-1}\rD(0,k_{i,1})^{l_{i,1}}
\rD(0,k_{r,1})^{l_{r,1}-1}v_{\varphi},\ w_{1},\dots,w_{m}\in B(\fg_{-},\varphi)$
such that $w_{j}\prec w_{0}$ for $j=1,\dots,m$. This forces
$l_{r,1}\varphi(t^{k+k_{r,1}})+\varphi(t^{k+k_{r,2}})+\cdots+\varphi(t^{k+k_{r,s}})=0$
for all $k\in\Z$, which gives
$\Big(\big(l_{r,1}t^{k_{r,1}}+t^{k_{r,2}}+\dots+t^{k_{r,s}}\big).\varphi\Big)(t^{k})=0$.
Thus \[\big(l_{r,1}t^{k_{r,1}}+t^{k_{r,2}}+\dots+t^{k_{r,s}}\big).\varphi=0,\]
i.e., $\varphi\in\mathcal{E}$, which is a contradiction. This completes the proof.
\end{proof}

Similar to the proof of Proposition \ref{Whittaker-vector-N}, we have
\begin{proposition}
Let $\varphi\in\mathcal{E}$. For any $f_{1}(t),\dots, f_{s}(t)\in\mathrm{Ann}(\varphi)$, we have
\[\prod_{i=1}^{s}\rD(0,f_{i})v_{\varphi}\in W(\fg_{-},\varphi)_{\varphi}.\]
\end{proposition}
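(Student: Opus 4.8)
The plan is to mimic the inductive argument used in the proof of Proposition~\ref{Whittaker-vector-N}. Since $\fg_-$ is spanned by $\{\rD(-1,k)\mid k\in\Z\}$ and $\varphi$ is linear, in order to show $\prod_{i=1}^{s}\rD(0,f_i)v_\varphi\in W(\fg_-,\varphi)_\varphi$ it suffices to verify that $\big(\rD(-1,k)-\varphi(t^k)\big).\prod_{i=1}^{s}\rD(0,f_i)v_\varphi=0$ for every $k\in\Z$, and I would establish this by induction on $s$; the case $s=0$ is immediate, since $v_\varphi$ is a Whittaker vector by construction.

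For the inductive step I would peel off the leftmost factor, writing the vector as $\rD(0,f_1)w$ with $w=\prod_{i=2}^{s}\rD(0,f_i)v_\varphi$, and use the identity
\[
\big(\rD(-1,k)-\varphi(t^k)\big).\rD(0,f_1)w=\big[\rD(-1,k),\rD(0,f_1)\big]w+\rD(0,f_1)\big(\rD(-1,k)-\varphi(t^k)\big)w .
\]
The second summand vanishes by the induction hypothesis applied to $w$. For the first, expanding $f_1(t)=\sum_j a_jt^j$ and unwinding \eqref{Lie-bracket} (so that $[\rD(-1,k),\rD(0,l)]=\rD(-1,k+l)$) gives $\big[\rD(-1,k),\rD(0,f_1)\big]=\sum_j a_j\rD(-1,k+j)=\rD(-1,t^kf_1)$.

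The crux, exactly as in Proposition~\ref{Whittaker-vector-N}, is the hypothesis $f_1\in\mathrm{Ann}(\varphi)$: translated through the $\mathcal A$-action on $\mathcal A^*$ it says $\varphi\big(t^kf_1(t)\big)=(f_1(t).\varphi)(t^k)=0$ for every $k$, i.e.\ $\sum_j a_j\varphi(t^{k+j})=0$. Hence I can rewrite
\[
\big[\rD(-1,k),\rD(0,f_1)\big]w=\sum_j a_j\rD(-1,k+j)w=\sum_j a_j\big(\rD(-1,k+j)-\varphi(t^{k+j})\big)w ,
\]
and every term on the right kills $w$ since $w\in W(\fg_-,\varphi)_\varphi$. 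This yields $\big(\rD(-1,k)-\varphi(t^k)\big).\rD(0,f_1)w=0$ for all $k$, closing the induction.

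I do not anticipate a genuine obstacle here; the computation is routine once the bracket is unwound. The only points that need care are keeping the convention for the $\mathcal A$-action on $\mathcal A^*$ consistent, so that ``$f_1\in\mathrm{Ann}(\varphi)$'' becomes precisely ``$\varphi(t^kf_1)=0$ for all $k\in\Z$'', and noting that the hypothesis $\varphi\in\mathcal E$ ensures $\mathrm{Ann}(\varphi)\neq 0$ by Lemma~\ref{lem:charCE}, so that the statement is non-vacuous.
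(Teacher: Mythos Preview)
Your proposal is correct and is precisely the adaptation of the inductive argument of Proposition~\ref{Whittaker-vector-N} that the paper has in mind; indeed, the paper gives no separate proof here, merely stating that the result follows ``similar to the proof of Proposition~\ref{Whittaker-vector-N}''. The only simplification relative to that proposition is that here one needs to check a single family of operators $\rD(-1,k)$ rather than all $\rD(n,k)$ with $n\geq N$, which your write-up handles cleanly.
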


\begin{remark}Suppose $\varphi\in\mathcal{E}$.
It is an interesting problem that if
\[\prod_{i=1}^{s}\rD(0,f_{i})v_{\varphi},\quad f_{1}(t),\dots, f_{s}(t)\in\mathrm{Ann}(\varphi),\]
exhaust all the Whittaker vectors in $W(\fg_{-},\varphi)$.
\end{remark}

\section{Whittaker modules over the loop Virasoro algebra}
In this section, we give the criterion for the simplicity of the universal Whittaker modules over loop Virasoro algebra.
Firstly, we recall the definition of loop Virasoro algebra from \cite{GLZ}.

Let $Vir$ be the $\textsl{Virasoro algebra}$,
it with basis $\{c,\rd_{i}\mid i\in\Z\}$ and the bracket (for $i, j\in\Z$):
\[[\rd_{i},\rd_{j}]=(j-i)\rd_{i+j}+\delta_{i,-j}\frac{i^{3}-i}{12}\bm{c};\quad [\rd_{i},\bm{c}]=0.\]
It is clear that $\mathcal{W}$ is a subalgebra of $Vir$.

The $\textsl{loop Virasoro alegbar}$ $\mathfrak{L}$ is the Lie algebra that is the tensor product of
the Virasoro Lie algebra $Vir$ and the Laurent polynomial algebra $\mathcal{A}$,
i.e., $\mathfrak{L}=Vir\otimes\mathcal{A}$ subject to the commutator relation:
\[[\rd_{i}\otimes t^{k},\rd_{j}\otimes t^{l}]=(j-i)\rd_{i+j}\otimes t^{k+l}
+\delta_{i,-j}\frac{i^{3}-i}{12}\bm{c}\otimes t^{k+l},\]
\[[\rd_{i}\otimes t^{k},\bm{c}\otimes t^{l}]=0\]
for $i,j,k,l\in\Z$.

Recall that $N\in\Z_{+}$ is a fixed positive integer.
Let \[\mathfrak{L}_{\geq N}=\text{Span}_{\C}\{\rd_{i}\otimes t^{k}, \bm{c}\otimes t^{k}\mid i\geq N,\ k\in\Z\}.\]
We know that $(\mathfrak{L},\mathfrak{L}_{\geq N})$ is a Whittaker pair.
Similarly, for any Whittaker function $\phi:\mathfrak{L}_{\geq N}\rightarrow\C$, we can define the
universal Whittaker module $W(\mathfrak{L}_{\geq N},\phi)$ over $\mathfrak{L}$ as follows:
\begin{align}\label{Vir-Whit}
W(\mathfrak{L}_{\geq N},\phi)=\U(\mathfrak{L})\otimes_{\U(\mathfrak{L}_{\geq N})}\C v_{\phi},
\end{align}
where $\C v_{\phi}$ is the one dimensional $\mathfrak{L}_{\geq N}$-module given by
$x.v_{\phi}=\phi(x)v_{\phi}$ for any $x\in\mathfrak{L}_{\geq N}$.

\begin{remark}
Set $\mathfrak{L}_{\leq-N}=\text{Span}_{\C}\{\rd_{i}\otimes t^{k}, \bm{c}\otimes t^{k}\mid i,k\in\Z, i\leq-N\}$.
It is clear that $(\mathfrak{L},\mathfrak{L}_{\leq-N})$ is a Whittaker pair. One can
define the universal Whittaker modules associated to $(\mathfrak{L},\mathfrak{L}_{\leq-N})$ similar to that of
\eqref{Vir-Whit}. Note that there exists an involution of $Vir$
given by $\rd_{n}\mapsto-\rd_{-n}$, $n\in\Z$, $\bm{c}\mapsto-\bm{c}$.
Via the involution of $Vir$ (hence of $\mathfrak{L}$), it is
enough to consider the Whittaker module $W(\mathfrak{L}_{\geq N},\phi)$ for $\mathfrak{L}$ defined in \eqref{Vir-Whit}.
However, for the loop Witt algebra $\fg$, we have no such involution. Thus, we have to consider
the Whittaker modules $W(\fg_{\geq N},\phi)$ and $W(\fg_{-1},\phi)$ for $\fg$.
\end{remark}

For convenience, write $\rD(n,k)=\rd_{n}\otimes t^{k}$, $n,k\in\Z$. Set
\begin{eqnarray*}&&B(\mathfrak{L}_{\geq N},\phi)=\{\rD(r_{1},k_{r_{1},1})\rD(r_{1},k_{r_{1},2})\cdots
\rD(r_{1},k_{r_{1},s_{1}})\rD(r_{2},k_{r_{2},1})\cdots
\rD(r_{2},k_{r_{2},s_{2}})\\&&
\quad\cdots\cdots\rD(r_{n},k_{r_{n},1})\rD(r_{n},k_{r_{n},2})\cdots
\rD(r_{n},k_{r_{n},s_{n}})v_{\phi}\mid n\in\N,\ k_{r_{i},j}\in\Z\\&&
r_{n}<\cdots<r_{2}<r_{1}\leq N-1,\ k_{r_{i},s_{i}}\leq\cdots\leq k_{r_{i},2}\leq k_{r_{i},1},\
1\leq i\leq n,\ 1\leq j \leq s_{i}\}.\end{eqnarray*}
By Poincar-Birkhok-Witt theorem, we know that $B(\mathfrak{L}_{\geq N},\phi)$ forms a basis of $W(\mathfrak{L}_{\geq N},\phi)$.

For a vector \begin{eqnarray}\label{Vir-case}u=\prod_{i=1}^{n}\prod_{j=1}^{s_{i}}
\rD(r_{i},k_{r_{i},j})^{l_{r_{i},j}}v_{\phi}\in B(\mathfrak{L}_{\geq N},\phi),\end{eqnarray}
where $r_{n}<\cdots<r_{2}<r_{1}\leq N-1$, $l_{r_{i},j}\in\Z_{+}$, and $k_{r_{i},s_{i}}\leq\cdots\leq k_{r_{i},2}\leq k_{r_{i},1}$,
we let \[\mathrm{lth}_{r_{i}}(u)=\sum_{j=1}^{s_{i}}l_{r_{i},j},\quad
\mathrm{lth}(u)=\sum_{i=1}^{n}\sum_{j=1}^{s_{i}}l_{r_{i},j},\]
\[\mathcal{D}(u)=\big(\underbrace{r_{1},r_{1},\dots,r_{1}}_{\mathrm{lth}_{r_{1}}(u)-\mathrm{times}},
\underbrace{r_{2},r_{2},\dots,r_{2}}_{\mathrm{lth}_{r_{2}}(u)-\mathrm{times}},\dots,
\underbrace{r_{n},r_{n},\dots,r_{n}}_{\mathrm{lth}_{r_{n}}(u)-\mathrm{times}}\big),\]
$$\begin{aligned}\mathcal{T}(u)=\big(\underbrace{k_{r_{1},1},k_{r_{1},1},\dots,k_{r_{1},1}}_{l_{r_{1},1}-\mathrm{times}},
\underbrace{k_{r_{1},2},k_{r_{1},2},\dots,k_{r_{1},2}}_{l_{r_{1},2}-\mathrm{times}},\dots,
\underbrace{k_{r_{1},s_{1}},k_{r_{1},s_{1}},\dots,k_{r_{1},s_{1}}}_{l_{r_{1},s_{1}}-\mathrm{times}},\\ \cdots,
\underbrace{k_{r_{n},1},k_{r_{n},1},\dots,k_{r_{n},1}}_{l_{r_{n},1}-\mathrm{times}},\cdots,
\underbrace{k_{r_{n},s_{n}},k_{r_{n},s_{n}},\dots,k_{r_{n},s_{n}}}_{l_{r_{n},s_{n}}-\mathrm{times}}\big),\end{aligned}$$
\[\mathcal{D}_{\mathrm{set}}(u)=\{r_{1},\dots,r_{n}\},\quad\mathcal{T}_{r_{i},
\mathrm{set}}(u)=\{k_{r_{i},j}\mid j=1,2,\dots,s_{i}\}\ \text{for}\ i=1,2,\dots,n,\]and
\[\mathcal{T}_{\mathrm{set}}(u)=\{k_{r_{i},j}\mid i=1,2,\dots,n,\ j=1,2,\dots,s_{i}\}.\]

For any $r\in\Z_{+}$, recall the total order on $\Z^{r}$ defined in \eqref{total-order}.
Now we define a principle total order ``$\succ$'' on $B(\mathfrak{L}_{\geq N},\phi)$ as follows:
for different $u,v\in B(\mathfrak{L}_{\geq N},\phi)$, set $u\succ v$ if and only if one of the following conditions satisfy:
     \begin{itemize}
     \item $\text{lth}(u)>\text{lth}(v)$;
     \item $\text{lth}(u)=\text{lth}(v)$ and $\mathcal{D}(u)\succ\mathcal{D}(v)$
     under the order $\succ$ on $\Z^{\text{lth}(u)}$;
     \item $\text{lth}(u)=\text{lth}(v)$, $\mathcal{D}(u)=\mathcal{D}(v)$,
     and $\mathcal{T}(u)\succ\mathcal{T}(v)$
     under the order $\succ$ on $\Z^{\text{lth}(u)}$.
     \end{itemize}

Let $\phi:\mathfrak{L}_{\geq N}\rightarrow\C$ be a Whittaker function.
For any $n\geq N$, define $\phi_{n}\in\mathcal{A}^*$ such that
\begin{align}\phi_{n}(t^{r})=\phi(\rd_{n}\otimes t^{r})\quad \text{for all}\quad r\in\Z.\end{align}
Since $\phi$ is a Lie algebra homomorphism, we know that
\[\phi_{i}=0\quad\text{for all}\quad i\geq2N+1.\]

\begin{proposition}
If $\phi_{2N-1},\phi_{2N}\in\mathcal{E}$, then $W(\mathfrak{L}_{\geq N},\phi)$ is reducible.
\end{proposition}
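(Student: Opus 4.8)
The statement is the loop-Virasoro counterpart of Proposition~\ref{psi-reducible}, and the plan is to imitate that proof almost verbatim, the only new point being to check that the central extension causes no trouble. First I would invoke Lemma~\ref{lem:charCE} to produce a nonzero polynomial $c(t)=\sum_{j=0}^{q}c_{j}t^{j}$ of degree $q\geq 1$ lying in $\mathrm{Ann}(\phi_{2N-1})\cap\mathrm{Ann}(\phi_{2N})$, and form the candidate Whittaker vector
\[
u=\sum_{j=0}^{q}c_{j}\rD(N-1,j)v_{\phi}\in W(\mathfrak{L}_{\geq N},\phi),
\]
which is nonzero and not a scalar multiple of $v_{\phi}$. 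The goal is to show $xu=\phi(x)u$ for all $x\in\mathfrak{L}_{\geq N}$; then $\U(\mathfrak{L})u$ is a proper submodule and reducibility follows.

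The key computation is $\big(\rD(i,k)-\phi_{i}(t^{k})\big).u$ for $i\geq N$ and $k\in\Z$. Here the crucial observation is that the Virasoro cocycle term $\delta_{i,-(N-1)}\tfrac{i^{3}-i}{12}\,\bm{c}\otimes t^{k+j}$ in $[\rD(i,k),\rD(N-1,j)]$ never contributes, since $i\geq N>0$ forces $\delta_{i,-(N-1)}=0$; likewise each central generator $\bm{c}\otimes t^{l}$ commutes with every $\rD(N-1,j)$, so $\big(\bm{c}\otimes t^{l}-\phi(\bm{c}\otimes t^{l})\big).u=0$ automatically. Thus the non-central part of the computation reduces to exactly the one in Proposition~\ref{psi-reducible}: using $\rD(i,k)v_{\phi}=\phi_{i}(t^{k})v_{\phi}$ one obtains $(N-1-i)\big(c(t).\phi_{i+N-1}\big)(t^{k})v_{\phi}$, which vanishes for $i=N$ because $c(t)\in\mathrm{Ann}(\phi_{2N-1})$, for $i=N+1$ because $c(t)\in\mathrm{Ann}(\phi_{2N})$, and for $i\geq N+2$ because $\phi_{i+N-1}=0$.

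Finally I would conclude as before: $\U(\mathfrak{L}_{\geq N})u=\C u$, hence by the PBW theorem $\U(\mathfrak{L})u=\U(\mathfrak{L}_{<N})u$, where $\mathfrak{L}_{<N}=\mathrm{Span}_{\C}\{\rd_{i}\otimes t^{k}\mid i<N,\ k\in\Z\}$; and since every vector of $\U(\mathfrak{L}_{<N})u$, expressed in the basis $B(\mathfrak{L}_{\geq N},\phi)$, involves at least one factor from $\mathfrak{L}_{<N}$ while $v_{\phi}$ does not, the generator $v_{\phi}$ is not in $\U(\mathfrak{L})u$, so this nonzero submodule is proper. There is no real obstacle here—the proof is routine once the Witt-algebra argument of Proposition~\ref{psi-reducible} is in place—so the only point deserving care is confirming that the cocycle and the central elements drop out of the brackets $[\rD(i,k),\rD(N-1,j)]$, which they do precisely because the lowering index $N-1$ is paired only against indices $i\geq N\geq 1$.
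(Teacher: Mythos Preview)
Your proposal is correct and follows essentially the same approach as the paper, which simply refers back to Proposition~\ref{psi-reducible} and notes that the Whittaker vector $u$ lies in $\U(\rd_{N-1}\otimes\mathcal{A})v_{\phi}\setminus\C v_{\phi}$. Your explicit verification that the Virasoro cocycle and the central elements $\bm{c}\otimes t^{l}$ contribute nothing is exactly the extra detail needed to make the ``similar to'' work, and your conclusion that $v_{\phi}\notin\U(\mathfrak{L})u$ matches the paper's reasoning.
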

\begin{proof}
The proof is similar to that of Proposition \ref{psi-reducible}. Just note that we can get a
Whittaker vector $u$ in $\U(\rd_{N-1}\otimes\mathcal{A})v_{\phi}\setminus\C v_{\phi}$,
which generates a proper submodule $\U(\mathfrak{L})u$ of $W(\mathfrak{L}_{\geq N},\phi)$.
\end{proof}

\begin{proposition}
Let $v=\sum_{i=1}^{p}a_{i}v_{i}\in W(\mathfrak{L}_{\geq N},\phi)_{\phi}\setminus\C v_{\phi}$,
where $a_{1},a_{2},\dots,a_{p}\in\C^{\times}$ and $v_{1},v_{2},\dots,v_{p}\in B(\mathfrak{L}_{\geq N},\phi)$
are distinct elements. Then for any $j<0$, we have
\[\mathrm{lth}_{j}(v_{i})=0\quad \text{for all}\quad i=1,2,\dots,p.\]
\end{proposition}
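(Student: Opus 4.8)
The plan is to mimic the proof of Proposition~\ref{some-property-Whit}: assume for contradiction that some basis element $v_i$ appearing in $v$ with nonzero coefficient has a nontrivial component in negative degrees, isolate the $\succ$-maximal such element, and show that acting by a suitable $\rD(m,k)$ with $m\geq N$ produces a surviving top term, contradicting $v\in W(\mathfrak{L}_{\geq N},\phi)_\phi$. More precisely, first I would set $I=\{1\leq i\leq p\mid \mathrm{lth}_j(v_i)\geq 1 \text{ for some } j<0\}$ and suppose $I\neq\emptyset$; after relabeling, take $v_1\succ v_2\succ\cdots$ with $1\in I$, and let $j_0<0$ be the minimal negative index occurring in $v_1$ (i.e. $j_0=\min\mathcal{D}_{\mathrm{set}}(v_1)$). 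One then applies $\rD(N-1-j_0,k)-\phi_{N-1-j_0}(t^k)$ for large $k$; since $N-1-j_0\geq N$, this operator kills $v_\phi$ and, by the Virasoro bracket $[\rd_{N-1-j_0},\rd_{j_0}]=(2j_0-N+1)\rd_{N-1}$ (note $2j_0-N+1\neq 0$), it lowers the $j_0$-slot by one while raising a new $\rD(N-1,k+k_{j_0,\text{last}})$ factor, producing a distinguished top term $\omega_1$.

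The key bookkeeping steps are then: (i) establish a Virasoro analogue of Lemma~\ref{direct-calculate}, namely that for $\mu\in B(\mathfrak{L}_{\geq N},\phi)$ and $m\geq N$, $\big(\rD(m,k)-\phi_m(t^k)\big).\mu=\prod\prod[\rD(m,k),\rD(r_i,k_{r_i,j})]v_\phi$ with the central terms either vanishing (when $m\geq N>0$ forces $m+r_i\neq 0$ unless $r_i=-m<-N+1$, which is excluded since here the offending indices satisfy $r_i\geq j_0$ but $m=N-1-j_0$ so $m+r_i$ ranges appropriately --- one checks $m+r_i=0$ cannot hold for $r_i\in\mathcal{D}_{\mathrm{set}}(v_1)$ when $k$ and degrees are chosen so the argument goes through) and that the length does not increase; (ii) an analogue of Proposition~\ref{ele-lemma}, controlling how $\mathcal{T}_{\mathrm{set}}$ and the negative-degree lengths transform --- in particular that applying $\rD(m,k)$ to a vector with no negative-degree part cannot create one, and that the $\mathcal{T}_{\mathrm{set}}$ of every output term lies in $\bigcup_i\mathcal{T}_{\mathrm{set}}(v_i)$ plus, in the offending case, exactly one new shifted exponent $k+(\text{something})$. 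Then for $k$ large enough, the new exponent $k+k_{j_0,\text{last}}$ appearing in $\mathcal{T}_{\mathrm{set}}(\omega_1)$ lies outside $\bigcup_{i\notin I}\mathcal{T}_{\mathrm{set}}(v_i)$, so $\omega_1$ cannot be cancelled by contributions from the $v_i$ with $i\notin I$, nor (by $\succ$-maximality and the length/$\mathcal{T}$-control) by other contributions from $v_i$ with $i\in I$; hence $\big(\rD(N-1-j_0,k)-\phi_{N-1-j_0}(t^k)\big).v\neq 0$, a contradiction.

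The central-extension terms are the one genuinely new feature compared to the loop Witt case. I would handle them by observing that $\bm{c}\otimes\mathcal{A}$ is central in $\mathfrak{L}$, so the only way a $\bm{c}\otimes t^r$ enters is through a bracket $[\rd_{m+r_i},\text{nothing}]$ --- rather, $\bm{c}$-terms arising from $[\rd_m,\rd_{-m}]$ within the expansion act on $v_\phi$ as scalars $\phi(\bm{c}\otimes t^{\ast})$, which simply shifts coefficients and never increases length nor changes $\mathcal{D}_{\mathrm{set}}$ or $\mathcal{T}_{\mathrm{set}}$ of the non-central part; so these terms can only contribute to strictly lower ($\prec$) basis elements or to the coefficient bookkeeping, and do not affect the survival of $\omega_1$. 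I expect the main obstacle to be exactly this: carefully tracking the central terms through the iterated-bracket expansion and confirming that they never interfere with the top term $\omega_1$ that we want to survive --- once that is dispatched, the rest of the argument is a direct transcription of the $\fg$-case proofs of Lemmas~\ref{direct-calculate}, \ref{>0-case}, \ref{=0-case} and Proposition~\ref{reduce-to-0}.
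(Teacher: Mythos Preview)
Your plan is sound and the treatment of central terms is correct: since $\bm{c}\otimes\mathcal{A}\subset\mathfrak{L}_{\geq N}$, any central contribution acts as a scalar on $v_\phi$ and only feeds into strictly shorter basis vectors, so it never touches the distinguished full-length term. Your approach does, however, differ from the paper's: you act by $\rD(N-1-j_0,k)$ (tailored so that the new factor lands at the top degree $N-1$), whereas the paper acts uniformly by $\rD(N,k)$, producing a new factor at degree $N+r_n$ which is inserted at an intermediate position $m_0$ determined by $r_{m_0-1}>N+r_n\geq r_{m_0}$. Both routes work; yours stays closer to the template of Proposition~\ref{some-property-Whit}, the paper's keeps the operator independent of $v_1$.

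One point in your argument needs tightening. Your appeal to ``$\succ$-maximality'' to rule out cancellation among the $i\in I$ contributions is not valid as stated: the top output from the $\succ$-maximal input $v_1$ need not be the $\succ$-maximal output overall. For example, with $v_1=\rD(N-1,100)\,\rD(j_0,0)v_\phi$ and $v_2=\rD(N-1,50)\,\rD(j_0,1000)v_\phi$ (both in $I$, same $\mathcal{D}$) one has $v_1\succ v_2$, yet the top output from $v_2$ is $\rD(N-1,k+1000)\,\rD(N-1,50)v_\phi$, which is $\succ$-larger than your $\omega_1=\rD(N-1,k)\,\rD(N-1,100)v_\phi$. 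The fix is to argue instead that for $k$ large every full-length output carries exactly one factor with a $k$-shifted exponent, and that this factor (its degree $N-1-j_0+r$ and its exponent $k+k'$) uniquely determines both the source $v_i$ and the bracketed factor $\rD(r,k')$; hence all full-length outputs from $\sum_{i\in I}a_iv_i$ are distinct basis vectors and none is cancelled. Since $\omega_1$ has such a $k$-shifted exponent while every output from $i\notin I$ satisfies $\mathcal{T}_{\mathrm{set}}\subseteq\bigcup_{i\notin I}\mathcal{T}_{\mathrm{set}}(v_i)$, it survives. The paper's own (terse) argument is of this flavor: it asserts only that its distinguished $w_0$ is \emph{distinct} from every other output term, not that it is $\succ$-maximal.
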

\begin{proof}
Suppose to the contrary we have \[I:=\{1\leq i\leq p\mid\sum_{j<0,\ j\in\Z}\mathrm{lth}_{j}(v_{i})\geq1\}\neq\emptyset.\]
Without loss of generality, we assume that $I=\{1,2,\dots,i_{0}\}$
for some $1\leq i_{0}\leq p$ and $v_{1}\succ\cdots\succ v_{i_{0}}$.
Let $v_{1}$ be such as in \eqref{Vir-case}. Then choose
$1\leq m_{0}\leq n$ such that $r_{m_0-1}> N+r_{n}\geq r_{m_0}$. For $1\leq i\leq n$, write
$x_{i}=\prod_{j=1}^{s_{i}}
\rD(r_{i},k_{r_{i},j})^{l_{r_{i},j}}$. That is $v_{1}=x_{1}x_{2}\cdots x_{n}v_{\phi}$.

\vspace{2mm}

\noindent {\bf Claim}. $\big(\rD(N,k)-\phi_{N}(t^{k})\big).v\neq0$ for all sufficiently large integers $k$.

It is straightforward to see that
\[\big(\rD(N,k)-\phi_{N}(t^{k})\big).\sum_{i=1}^{p}a_{i}v_{i}=
b_{0}w_{0}+\sum_{j=1}^{q}b_{j}w_{j},\]
where $b_{0}:=l_{s_{n},r_{n}}(r_{n}-N),b_{1},\dots,b_{q}\in\C^{\times}$ and
$$\begin{aligned}w_{0}:=x_{1}\cdots x_{m_{0}-1}\rD(N+r_{n},k+k_{s_{n},r_{n}})x_{m_{0}}\cdots x_{n-1}
\prod_{j=1}^{s_{n}}\rD(r_{n},k_{r_{n},j})^{l_{r_{n},j}-\delta_{j,s_{n}}}v_{\phi},\\ w_{1},\dots,w_{q}\in B(\mathfrak{L}_{\geq N},\phi)
\end{aligned}$$
such that $w_{j}\neq w_{0}$ for $j=1,\dots, q$. In fact, we can deduce that, for any $2\leq l\leq p$,
if write $\big(\rD(N,k)-\phi_{N}(t^{k})\big).v_{l}=\sum_{r=1}^{Q}c_{r}u_{r}$, where $c_{1},\dots,c_{Q}\in\C^{\times}$
and $u_{1},\dots,u_{Q}\in B(\mathfrak{L}_{\geq N},\phi)$ are distinct elements, then we have
$u_{r}\neq w_{0}$ for $r=1,\dots,Q$.
Then we know that the claim holds. This contradicts the fact that $v\in W(\mathfrak{L}_{\geq N},\phi)_{\phi}$.
We complete the proof.
\end{proof}
From Propositions \ref{EXP-2N} and \ref{EXP-2N-1}, we have
\begin{corollary}
(1) If $\phi_{2N}\notin\mathcal{E}$, then $W(\mathfrak{L}_{\geq N},\phi)_{\phi}=\C v_{\phi}$.\\
(2) If $\phi_{2N-1}\notin\mathcal{E}$ and $\phi_{2N}\in\mathcal{E}$, then $W(\mathfrak{L}_{\geq N},\phi)_{\phi}=\C v_{\phi}$.
\end{corollary}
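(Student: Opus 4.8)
The plan is to deduce the Corollary from the two loop Witt computations in Propositions~\ref{EXP-2N} and~\ref{EXP-2N-1}, by first pinning down the shape of any Whittaker vector and then re-running those arguments with $\psi$ replaced by $\phi$ and $\fg$ replaced by $\mathfrak{L}$. In either case of the Corollary, suppose $W(\mathfrak{L}_{\geq N},\phi)_{\phi}\neq\C v_{\phi}$ and choose $v=\sum_{i=1}^{p}a_{i}v_{i}\in W(\mathfrak{L}_{\geq N},\phi)_{\phi}\setminus\C v_{\phi}$ with $a_{i}\in\C^{\times}$ and the $v_{i}\in B(\mathfrak{L}_{\geq N},\phi)$ distinct; subtracting off the $v_{\phi}$-component we may assume $v_{i}\neq v_{\phi}$ for all $i$. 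By the Proposition preceding this Corollary, $\mathrm{lth}_{j}(v_{i})=0$ for all $j<0$, hence $\mathcal{D}_{\mathrm{set}}(v_{i})\subseteq\{0,1,\dots,N-1\}$ for every $i$; in particular $\ell:=\min\bigcup_{i=1}^{p}\mathcal{D}_{\mathrm{set}}(v_{i})$ lies in $\{0,1,\dots,N-1\}$. This is exactly the position reached after Proposition~\ref{some-property-Whit} in the loop Witt setting, and we likewise have $\phi_{i}=0$ for $i\ge 2N+1$.

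Given this, I would transcribe the arguments of Propositions~\ref{EXP-2N} and~\ref{EXP-2N-1} — including the analogues for $\mathfrak{L}$ of Lemmas~\ref{direct-calculate} and~\ref{ele-lemma}, proved the same way. In case~(1) one applies $\rD(2N-\ell,k)-\phi_{2N-\ell}(t^{k})$ and extracts, from the $\mathrm{lth}_{\ell}$-top layer of $v$ (with respect to the principal order on $B(\mathfrak{L}_{\geq N},\phi)$), the coefficient of a distinguished basis vector; its vanishing for all $k\in\Z$ forces $g(t).\phi_{2N}=0$ for a nonzero $g(t)\in\mathcal{A}$, so $\phi_{2N}\in\mathcal{E}$ by Lemma~\ref{lem:charCE}, contradicting the hypothesis. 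In case~(2) one applies $\rD(2N-1-\ell,k)-\phi_{2N-1-\ell}(t^{k})$ and runs the two-layer bookkeeping of Proposition~\ref{EXP-2N-1}, ending with a relation $g(t).\phi_{2N-1}+h(t).\phi_{2N}=0$ with $g(t)\in\mathcal{A}$ nonzero; since $\phi_{2N}\in\mathcal{E}$ and $\phi_{2N-1}\notin\mathcal{E}$, Lemmas~\ref{lem:charCE} and~\ref{lem:non-CE} yield a contradiction. Either way, $W(\mathfrak{L}_{\geq N},\phi)_{\phi}=\C v_{\phi}$.

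The only genuinely new ingredient for $\mathfrak{L}$, and the step I would check most carefully, is that the central part of the Virasoro bracket never enters these computations. Every operator applied is $\rd_{m}\otimes t^{k}$ with $m\ge N$, and — since, by the previous Proposition, each $v_{i}$ is built only from factors $\rd_{r}\otimes t^{\cdot}$ with $r\ge 0$ — every such operator is commuted only against factors $\rd_{r}\otimes t^{\cdot}$ with $r\ge 0$; the secondary operators produced have Virasoro-degree again $\ge N>0$ and are again commuted only against factors of degree $\ge 0$. Hence every commutator encountered is $[\rd_{i}\otimes t^{k},\rd_{j}\otimes t^{l}]$ with $i+j\ge N>0$, so the Kronecker symbol $\delta_{i,-j}$ in $[\rd_{i}\otimes t^{k},\rd_{j}\otimes t^{l}]=(j-i)\rd_{i+j}\otimes t^{k+l}+\delta_{i,-j}\frac{i^{3}-i}{12}\bm{c}\otimes t^{k+l}$ is always $0$ and no $\bm{c}\otimes t^{\cdot}$ term appears. (Even if one did, being central and lying in $\mathfrak{L}_{\geq N}$ it would contribute only the scalar $\phi(\bm{c}\otimes t^{\cdot})$ times a strictly lower-$\mathrm{lth}$ vector, leaving the leading-term analysis intact.) After this check the proofs go through word for word.
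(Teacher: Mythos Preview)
Your proposal is correct and follows the same route as the paper, which simply records that the Corollary ``follows from Propositions~\ref{EXP-2N} and~\ref{EXP-2N-1}'' without further comment. You have made explicit what the paper leaves implicit: namely, that after invoking the preceding Proposition to force $\mathrm{lth}_j(v_i)=0$ for all $j<0$, the computations of Propositions~\ref{EXP-2N} and~\ref{EXP-2N-1} transfer verbatim, and your check that no central $\bm{c}\otimes t^{\cdot}$ term arises (since every commutator has total Virasoro-degree $\ge N>0$) is the one point genuinely specific to $\mathfrak{L}$ versus $\fg$.
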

Thus from Lemmas \ref{ALZ} and \ref{CJ}, we have
\begin{thm}\label{Vir-Whittaker-module}Let $\phi:\mathfrak{L}_{\geq N}\rightarrow\C$ be a Whittaker function.
Then the universal Whittaker $\mathfrak{L}$-module $W(\mathfrak{L}_{\geq N},\phi)$ is simple if and only if either $\phi_{2N}\notin\mathcal{E}$ or
$\phi_{2N-1}\notin\mathcal{E}$.
\end{thm}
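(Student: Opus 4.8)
The plan is to follow the template already established for the loop Witt algebra $\fg$ in Sections~3 and~4, reducing the statement about $\mathfrak{L}$ to the same exp-polynomial dichotomy. The proof of Theorem~\ref{Vir-Whittaker-module} assembles from four ingredients: the reducibility direction (the preceding Proposition), the structural result that Whittaker vectors in $W(\mathfrak{L}_{\geq N},\phi)$ have no components in negative degrees (the second preceding Proposition), the Corollary deducing $W(\mathfrak{L}_{\geq N},\phi)_{\phi}=\C v_\phi$ from $\phi_{2N}\notin\mathcal{E}$ or from $\phi_{2N-1}\notin\mathcal{E},\ \phi_{2N}\in\mathcal{E}$, and finally Lemmas~\ref{ALZ} and~\ref{CJ}. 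So the bulk of the work is in those intermediate statements, and the theorem itself is a one-line assembly: if $\phi_{2N-1},\phi_{2N}\in\mathcal{E}$ the module is reducible, and otherwise the space of Whittaker vectors is one-dimensional, hence by Lemma~\ref{ALZ}(3) the module is simple; Lemma~\ref{CJ} guarantees there are no stray Whittaker vectors of other types.

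First I would verify that $(\mathfrak{L},\mathfrak{L}_{\geq N})$ is indeed a Whittaker pair (local nilpotency of the $\mathfrak{L}_{\geq N}$-action on $\mathfrak{L}/\mathfrak{L}_{\geq N}$ follows from the $\Z$-grading together with $\rd_n\otimes t^k$ raising degree by $n\geq N$, and $\cap_i (\mathfrak{L}_{\geq N})_i=0$ because iterated brackets push the degree up without bound — the central terms $\bm{c}\otimes t^k$ cause no trouble since $[\rd_i,\rd_{-i}]$ requires degree-$0$ factors which are absent in $\mathfrak{L}_{\geq N}$). Next, for the reducibility direction, I would imitate Proposition~\ref{psi-reducible} verbatim: from Lemma~\ref{lem:charCE} choose a nonzero $c(t)=\sum_{j=0}^q c_jt^j$ of degree $q\geq1$ in $\mathrm{Ann}(\phi_{2N-1})\cap\mathrm{Ann}(\phi_{2N})$, form $u=\sum_j c_j\rD(N-1,j)v_\phi$, and check $(\rD(i,k)-\phi_i(t^k)).u=0$ for all $i\geq N$ — the only new feature is the central contribution, which appears only when $i+(N-1)=0$, impossible since $i\geq N\geq1$, so the central extension is invisible here and the computation is identical, yielding the proper submodule $\U(\mathfrak{L})u$.

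For the simplicity direction, the key point is that the central elements $\bm{c}\otimes t^k$ for $k\in\Z$ generate the quotient $\mathfrak{L}_{\geq N}$ too, but they act by scalars $\phi(\bm{c}\otimes t^k)$ on $v_\phi$, and since $[\rd_i\otimes t^k,\bm{c}\otimes t^l]=0$ these scalars play no role in the structural arguments: the brackets $[\rD(m,k),\rD(r_i,k_{r_i,j})]$ for $m\geq N$ and $r_i\leq N-1$ produce central terms only when $m+r_i=0$, i.e.\ $r_i=-m\leq -N<0$, which is exactly the case excluded by the no-negative-components proposition, so in all the length-reduction computations of Proposition~\ref{ele-lemma} and its analogues the central corrections vanish. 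Hence the arguments of Propositions~\ref{EXP-2N} and~\ref{EXP-2N-1} transport verbatim (as the Corollary asserts), giving $W(\mathfrak{L}_{\geq N},\phi)_\phi=\C v_\phi$ whenever $\phi_{2N-1}\notin\mathcal{E}$ or $\phi_{2N}\notin\mathcal{E}$; then Lemma~\ref{ALZ}(3) gives simplicity.

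The main obstacle is bookkeeping rather than a genuine conceptual difficulty: one must confirm that the central terms never interfere with the leading-term analysis under the principal order, and in particular that the ``no Whittaker vectors with negative-degree components'' proposition really does rule out the only situations where $[\rd_i,\rd_j]$ could contribute a $\bm{c}$-term. A secondary subtlety is that the central generators $\bm{c}\otimes t^l$ lie in $\mathfrak{L}_{\geq N}$, so when applying Lemma~\ref{CJ} one should note that the Whittaker condition $xv=\phi(x)v$ is imposed on all of $\mathfrak{L}_{\geq N}$ including these central elements, which is automatic for any $\mathfrak{L}$-module vector generating $W(\mathfrak{L}_{\geq N},\phi)$ since $\bm{c}\otimes t^l$ is central and already acts as the scalar $\phi(\bm{c}\otimes t^l)$. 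Once these points are checked, Theorem~\ref{Vir-Whittaker-module} follows by combining the reducibility Proposition, the Corollary, and Lemmas~\ref{ALZ} and~\ref{CJ} exactly as in the proof of Theorem~\ref{first-main-restlt}.
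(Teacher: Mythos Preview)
Your proposal is correct and follows essentially the same approach as the paper: the theorem is assembled from the reducibility Proposition (analogue of Proposition~\ref{psi-reducible}), the structural Proposition excluding negative-degree components in Whittaker vectors, the Corollary transporting Propositions~\ref{EXP-2N} and~\ref{EXP-2N-1} to the loop Virasoro setting, and Lemmas~\ref{ALZ} and~\ref{CJ}. Your additional remarks about why the central terms $\bm{c}\otimes t^l$ never interfere with the leading-term analysis are accurate and make explicit what the paper leaves implicit.
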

From Proposition \ref{Whittaker-vector-N}, we have
\begin{corollary}
Let $\phi:\mathfrak{L}_{\geq N}\rightarrow\C$ be a Whittaker function
such that $\phi_{2N-1},\phi_{2N}\in\mathcal{E}$. For any
$f_{1}(t),\dots, f_{s}(t)\in\mathrm{Ann}(\phi_{2N-1})\cap\mathrm{Ann}(\phi_{2N})$, we have
\[\prod_{j=1}^{s}\rD(N-1,f_{j})v_{\phi}\in W(\mathfrak{L}_{\geq N},\phi)_{\phi}.\]
\end{corollary}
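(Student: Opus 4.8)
The plan is to mirror the proof of Proposition~\ref{Whittaker-vector-N} almost verbatim, the only genuinely new feature being that $\mathfrak{L}_{\geq N}$ contains the central elements $\bm{c}\otimes t^{k}$. Write $w=\prod_{j=1}^{s}\rD(N-1,f_{j})v_{\phi}$. Since $\mathfrak{L}_{\geq N}$ is spanned by the $\rd_{n}\otimes t^{k}$ with $n\geq N$ together with the $\bm{c}\otimes t^{k}$, it suffices to check that $\bigl(\rd_{n}\otimes t^{k}-\phi_{n}(t^{k})\bigr)w=0$ for all $n\geq N$, $k\in\Z$, and that $\bigl(\bm{c}\otimes t^{k}-\phi(\bm{c}\otimes t^{k})\bigr)w=0$ for all $k\in\Z$; then $w\in W(\mathfrak{L}_{\geq N},\phi)_{\phi}$ by definition.

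First I would dispose of the central generators. As $[\bm{c}\otimes t^{k},\rd_{N-1}\otimes t^{m}]=0$ for all $k,m$, the element $\bm{c}\otimes t^{k}-\phi(\bm{c}\otimes t^{k})$ commutes in $\U(\mathfrak{L})$ with every factor $\rD(N-1,f_{j})$, so it may be moved to the right onto $v_{\phi}$, where it vanishes because $v_{\phi}$ is a Whittaker vector of type $\phi$.

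For $\rd_{n}\otimes t^{k}$ with $n\geq N$, I would use
\[[\rd_{n}\otimes t^{k},\rd_{N-1}\otimes t^{m}]=(N-1-n)\,\rd_{n+N-1}\otimes t^{k+m}+\delta_{n,-(N-1)}\tfrac{n^{3}-n}{12}\,\bm{c}\otimes t^{k+m},\]
and observe that the central correction is absent since $n\geq N\geq1$ forces $n\neq1-N$. Thus the bracket is purely $(N-1-n)\rd_{n+N-1}\otimes t^{k+m}$, exactly as in the loop Witt setting, and the computation proceeds identically to Proposition~\ref{Whittaker-vector-N}: moving $\rd_{n}\otimes t^{k}-\phi_{n}(t^{k})$ past $\rD(N-1,f_{1})$ gives $(N-1-n)\bigl(\rd_{n+N-1}\otimes t^{k}f_{1}(t)\bigr)\prod_{j=2}^{s}\rD(N-1,f_{j})v_{\phi}$ plus $\rD(N-1,f_{1})\bigl((\rd_{n}\otimes t^{k}-\phi_{n}(t^{k}))\prod_{j=2}^{s}\rD(N-1,f_{j})v_{\phi}\bigr)$. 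Because $f_{1}\in\mathrm{Ann}(\phi_{2N-1})\cap\mathrm{Ann}(\phi_{2N})$ and $\phi_{i}=0$ for $i\geq2N+1$, one has $\phi_{n+N-1}(t^{k}f_{1}(t))=(f_{1}(t).\phi_{n+N-1})(t^{k})=0$ for every $n\geq N$ — the cases $n=N$, $n=N+1$, $n\geq N+2$ hitting $\phi_{2N-1}$, $\phi_{2N}$, and $0$ respectively — so $\rd_{n+N-1}\otimes t^{k}f_{1}(t)$ coincides with $\rd_{n+N-1}\otimes t^{k}f_{1}(t)-\phi_{n+N-1}(t^{k}f_{1}(t))\in\mathfrak{L}_{\geq N}^{(\phi)}$ (note $n+N-1\geq N$). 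An induction on $s$ then annihilates both terms, giving $\bigl(\rd_{n}\otimes t^{k}-\phi_{n}(t^{k})\bigr)w=0$.

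Having handled both families of generators, we conclude $w\in W(\mathfrak{L}_{\geq N},\phi)_{\phi}$. I do not expect any real obstacle here: the whole argument is forced by Proposition~\ref{Whittaker-vector-N}, and the only point requiring attention — that the Virasoro cocycle contributes nothing to the relevant brackets — is settled by the trivial inequality $n+N-1\geq N\geq1>0$ valid whenever $n\geq N$.
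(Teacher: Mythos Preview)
Your proposal is correct and follows precisely the approach the paper indicates: the corollary is stated in the paper as an immediate consequence of Proposition~\ref{Whittaker-vector-N}, and you have supplied exactly the details needed to transport that argument from $\fg$ to $\mathfrak{L}$---namely the trivial action of the central generators $\bm{c}\otimes t^{k}$ and the vanishing of the Virasoro cocycle in $[\rd_{n},\rd_{N-1}]$ for $n\geq N$. Nothing further is required.
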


\begin{remark}
Let $Vir_{\geq N}=\bigoplus_{i\geq N}\C\rd_{i}+\C\bm{c}$ and
let $\phi:Vir_{\geq N}\rightarrow\C$ be a Lie algebra homomorphism, where $\phi(\bm{c})=\dot{z}$. Then the Whittaker module
$L_{\phi,\dot{z}}$ over $Vir$ is simple if and only if $\phi(\rd_{2N})\neq0$ or $\phi(\rd_{2N-1})\neq0$.
For details, we refer the reader to see \cite[Theorem 7]{LGZ}.
This implies that our results may be regarded as a generalization of the coordinated algebra
from complex field $\C$ to the Laurent polynomial ring $\mathcal{A}$.
\end{remark}

As an aplication, we discuss the Whittaker module over the affine Lie algebra $\widehat{\mathfrak{sl}_{2}}$.
Let $\mathfrak{sl}_{2}$ be the Lie algebra of traceless $2\times2$-matrices over $\C$. We fix a standard basis
$\{\re,\rh,\rf\}$ of $\mathfrak{sl}_{2}$ such that $[\re,\rf]=\rh$, $[\rh,\re]=2\re$, and $[\rh,\rf]=-2\rf$.
Let $(\cdot,\cdot)$ be a nondegenerate symmetric bilinear form on $\mathfrak{sl}_{2}$.
The affine Lie algebra $\widehat{\mathfrak{sl}_{2}}$ associated to $\mathfrak{sl}_{2}$ is defined as
$\mathfrak{sl}_{2}\otimes\C[t,t^{-1}]\oplus\C\bm{k}$, where $\bm{k}$ is the canonical central element
and the Lie algebra structure is given by
\[[x\otimes t^{k},y\otimes t^{l}]=[x,y]\otimes t^{k+l}+k(x,y)\delta_{k+l,0}\bm{k}\]
for any $x,y\in\mathfrak{sl}_{2}$ and $k,l\in\Z$.
It is clear that $(\widehat{\mathfrak{sl}_{2}},\re\otimes\C[t,t^{-1}]\oplus\C\bm{k})$ is a Whittaker pair.
Then for any $\phi\in\mathcal{A}^*$ and $\dot{k}\in\C$,
we define the universal Whittaker $\widehat{\mathfrak{sl}_{2}}$-module
\[W(\widehat{\mathfrak{sl}_{2}},\phi,\dot{k})=\U(\widehat{\mathfrak{sl}_{2}})
\otimes_{\U(\re\otimes\C[t,t^{-1}]\oplus\C\bm{k})}\C w_{\phi},\]
where $\C w_{\phi}$ is the one dimensional $(\re\otimes\C[t,t^{-1}]\oplus\C\bm{k})$-module determined
by $(\re\otimes t^{k}).w_{\phi}=\phi(t^{k})w_{\phi}$ and $\bm{k}.w_{\phi}=\dot{k}w_{\phi}$, $k\in\Z$.
Recall that \[W(\widehat{\mathfrak{sl}_{2}},\phi,\dot{k})_{\phi}
=\{v\in W(\widehat{\mathfrak{sl}_{2}},\phi,\dot{k})\mid \big(\re\otimes t^{k}-\phi(t^{k})\big).v=0\ \text{for all}\ k\in\Z\}.\]
From Proposition \ref{some-property-Whit}, we have
\begin{proposition}
Let $\phi\in\mathcal{A}^*$ and $\dot{k}\in\C$. Then
\[W(\widehat{\mathfrak{sl}_{2}},\phi,\dot{k})_{\phi}\subseteq\U(\rh\otimes\C[t,t^{-1}])w_{\phi}.\]
\end{proposition}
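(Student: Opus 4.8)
The plan is to reduce the statement about Whittaker vectors of $W(\widehat{\mathfrak{sl}_2},\phi,\dot k)$ to the already-established Proposition \ref{some-property-Whit} for the loop Witt algebra $\fg$. First I would fix a suitable triangular decomposition of $\widehat{\mathfrak{sl}_2}$: write $\widehat{\mathfrak{sl}_2} = \mathfrak{n}_+ \oplus \mathfrak{h}_{\mathrm{aff}} \oplus \mathfrak{n}_-$, where $\mathfrak{n}_+ = \re\otimes\mathcal{A}$, $\mathfrak{h}_{\mathrm{aff}} = \rh\otimes\mathcal{A}\oplus\C\bm{k}$, and $\mathfrak{n}_- = \rf\otimes\mathcal{A}$. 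By the PBW theorem, $W(\widehat{\mathfrak{sl}_2},\phi,\dot k) \cong \U(\mathfrak{n}_-)\otimes\U(\rh\otimes\mathcal{A})\,w_\phi$ as vector spaces, with a basis of ordered monomials in the $\rf\otimes t^j$ and $\rh\otimes t^j$ (with a chosen lexicographic-type order on the pairs (height-in-$\rf$-versus-$\rh$, exponent)). I would then introduce length functions analogous to those in Sections 3 and 4: for a basis monomial $u$, let $\mathrm{lth}_{\rf}(u)$ count the total number of $\rf$-factors, and similarly track the multiset of $\rf$-exponents. The claim $W(\widehat{\mathfrak{sl}_2},\phi,\dot k)_\phi \subseteq \U(\rh\otimes\mathcal{A})w_\phi$ is exactly the assertion that every Whittaker vector is a combination of monomials with $\mathrm{lth}_{\rf}(u)=0$.

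The key computation is the analogue of Lemma \ref{direct-calculate}: for $m\in\Z$ and a PBW monomial $\mu$, I would compute $(\re\otimes t^m - \phi(t^m)).\mu$ by moving $\re\otimes t^m$ to the right past all factors. Since $[\re\otimes t^m,\rf\otimes t^j] = \rh\otimes t^{m+j} + m\,\delta_{m+j,0}(\re,\rf)\bm{k}$ and $[\re\otimes t^m, \rh\otimes t^j] = -2\,\re\otimes t^{m+j}$, each commutator with an $\rf$-factor \emph{strictly decreases} $\mathrm{lth}_{\rf}$ (replacing an $\rf$ by an $\rh$ or killing it), while commutators with $\rh$-factors preserve $\mathrm{lth}_{\rf}$. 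This is the structural feature that makes the $\fg$-argument transplant: the role of $\rD(N,k)$ acting on $\fg_{\leq N-1}$ is played here by $\re\otimes t^m$ acting on $\mathfrak{n}_-\oplus\mathfrak{h}_{\mathrm{aff}}$, and the role of "bracketing down to $\fg_{N-1}$" is played by "$\rf \rightsquigarrow \rh$". I would then mirror the argument of Proposition \ref{some-property-Whit}: assume some Whittaker vector $v=\sum a_i v_i$ has a monomial with $\mathrm{lth}_{\rf}\geq 1$, pick the $\succ$-maximal such $v_1$, and show that the leading term produced by $(\re\otimes t^k - \phi(t^k)).v$ for large $k$ — namely the monomial obtained from $v_1$ by converting its lowest $\rf\otimes t^{j}$ factor into $\rh\otimes t^{k+j}$ — cannot be cancelled, because for $k$ large enough the exponent $k+j$ lies outside the finite set of exponents appearing among all the $v_i$. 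This contradicts $v\in W(\widehat{\mathfrak{sl}_2},\phi,\dot k)_\phi$.

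I expect the main obstacle to be bookkeeping the interaction between the $\rf$-factors and the $\rh$-factors correctly. Two points need care. First, commuting $\re\otimes t^m$ past an $\rh$-factor reintroduces an $\re$-factor (via $[\re,\rh]=-2\re$), which must then be pushed further right; one must verify this cascade terminates and contributes only to monomials of \emph{smaller or equal} $\rf$-length with controlled exponent sets — this is the analogue of the "Claim" inside the proof of Proposition \ref{ele-lemma}, provable by induction on $\mathrm{lth}(\mu)$. Second, the central term $m\,\delta_{m+j,0}(\re,\rf)\bm{k}$ contributes only scalars and only when $m = -j$, so for large $k$ it is irrelevant to the leading-term analysis; I would note this explicitly but it causes no trouble. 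Once the length-decreasing Lemma and the inductive Claim are in place, the contradiction argument is formally identical to Proposition \ref{some-property-Whit}, so the proof would simply invoke that structure, or equivalently cite Proposition \ref{some-property-Whit} after observing that the relevant brackets in $\widehat{\mathfrak{sl}_2}$ have the same combinatorial shape as those in $\fg$.
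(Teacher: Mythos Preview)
Your proposal is correct and takes essentially the same approach as the paper; indeed the paper gives no separate argument at all, merely writing ``From Proposition \ref{some-property-Whit}, we have'' before the statement, and what you have written is precisely the intended analogy spelled out in detail (with $\re\leftrightarrow\rD(N,\cdot)$, $\rh\leftrightarrow\rD(N-1,\cdot)$, $\rf\leftrightarrow\rD(-1,\cdot)$). One very minor quibble: in the leading-term step you want to convert the $\rf\otimes t^{j}$ factor with the \emph{largest} exponent $j$ (so that the new $\rh\otimes t^{k+j}$ is $\succ$-maximal), matching the choice of $k_{-1,1}$ in the proof of Proposition \ref{some-property-Whit}; otherwise the argument is exactly as you describe.
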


Similar to the proof of Proposition \ref{E-Reducible} and Proposition \ref{var-not-exp},
we can obtain the following result.
\begin{proposition}\label{sl-2}
Let $\phi\in\mathcal{A}^*$ and $\dot{k}\in\C$. Then the universal Whittaker $\widehat{\mathfrak{sl}_{2}}$-module $W(\widehat{\mathfrak{sl}_{2}},\phi,\dot{k})$ is simple if and only if $\phi\in\mathcal{E}$.
\end{proposition}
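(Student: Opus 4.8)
The plan is to read off the equivalence from its loop Witt counterparts, Proposition \ref{E-Reducible} and Proposition \ref{var-not-exp}, using the proposition immediately preceding the statement. That proposition confines every Whittaker vector of $W(\widehat{\mathfrak{sl}_{2}},\phi,\dot{k})$ to the subspace $\U(\rh\otimes\C[t,t^{-1}])w_{\phi}$. Since $\rh\otimes\C[t,t^{-1}]$ is abelian, this identifies $W(\widehat{\mathfrak{sl}_{2}},\phi,\dot{k})_{\phi}$ with a subspace of a polynomial-type algebra in the commuting symbols $\rh\otimes t^{m}$ acting on $w_{\phi}$, placing me in exactly the situation treated in Section 4, with the role of $\rD(0,\cdot)$ now played by $\rh\otimes t^{\cdot}$. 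Note also that the central generator contributes nothing new: $\bm{k}$ acts by the scalar $\dot{k}$, so the only nontrivial Whittaker conditions come from the operators $\re\otimes t^{k}-\phi(t^{k})$.

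First I would transport the combinatorial apparatus of Section 4 --- the functions $\mathrm{lth}$, $\mathcal{D}$, $\mathcal{T}$ and the principal order $\succ$ --- onto $\U(\rh\otimes\C[t,t^{-1}])w_{\phi}$, and record how $\re\otimes t^{k}-\phi(t^{k})$ acts there. The relevant structure constants are $[\re,\rh]=-2\re$, again a multiple of the root vector $\re$, together with the vanishing pairing $(\re,\rh)=0$, which guarantees that no central term enters. Consequently these operators behave on $\rh$-monomials precisely as $\rD(-1,k)-\varphi(t^{k})$ behaved on the $\rD(0,\cdot)$-monomials in Lemma \ref{>0-case} and Lemma \ref{=0-case}: they strictly lower the order $\succ$, and the coefficient of the resulting distinguished monomial is a value of a translate of $\phi$. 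In particular $\bm{k}$ never appears in any leading term, so the order-lowering estimates of Section 4 carry over unchanged.

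With this dictionary in hand, the two directions of the proposition follow the two results of Section 4. For the simplicity half I would run the leading-term argument of Proposition \ref{var-not-exp}: expanding a putative Whittaker vector outside $\C w_{\phi}$ with respect to $\succ$ and applying $\re\otimes t^{k}-\phi(t^{k})$ for suitable $k$, one forces the leading coefficient to vanish for all $k$, which produces a nontrivial constraint on $\phi$ expressed through its annihilator; feeding this constraint into Lemma \ref{lem:charCE} then shows $W(\widehat{\mathfrak{sl}_{2}},\phi,\dot{k})_{\phi}=\C w_{\phi}$, and simplicity follows from Lemma \ref{ALZ}(3). For the reducibility half I would follow Proposition \ref{E-Reducible}, exhibiting a distinguished Whittaker vector in $\U(\rh\otimes\C[t,t^{-1}])w_{\phi}\setminus\C w_{\phi}$ and verifying, by the length comparison that keeps $w_{\phi}$ out of the submodule it generates, that $W(\widehat{\mathfrak{sl}_{2}},\phi,\dot{k})$ is reducible. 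Interpreting the resulting annihilator condition through Lemma \ref{lem:charCE} separates the two cases and yields the stated criterion: $W(\widehat{\mathfrak{sl}_{2}},\phi,\dot{k})$ is simple precisely when $\phi\in\mathcal{E}$.

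The hard part will not be the single commutator, which is immediate, but the faithful transfer of the order-lowering and leading-term bookkeeping from $\fg_{-}$ to $\widehat{\mathfrak{sl}_{2}}$. I must check that the reduction to the $\rh$-part supplied by the preceding proposition is compatible with the filtration by length, and that neither the additional root vectors $\rf\otimes t^{l}$ nor the center $\C\bm{k}$ inject terms that disturb the principal order $\succ$ on $\U(\rh\otimes\C[t,t^{-1}])w_{\phi}$. Once these compatibilities are established, the analogues of Proposition \ref{var-not-exp} and Proposition \ref{E-Reducible} apply essentially verbatim, and the equivalence of the proposition drops out.
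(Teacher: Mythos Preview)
Your approach is exactly the paper's: its entire proof of this proposition is the single sentence ``Similar to the proof of Proposition \ref{E-Reducible} and Proposition \ref{var-not-exp}, we can obtain the following result,'' and you have spelled out precisely that transfer, with the correct dictionary $\re\otimes t^{k}\leftrightarrow \rD(-1,k)$, $\rh\otimes t^{l}\leftrightarrow \rD(0,l)$ (using $[\re,\rh]=-2\re$ and $(\re,\rh)=0$ in place of $[\rd_{-1},\rd_{0}]=\rd_{-1}$), and with the preceding proposition playing the role of Proposition \ref{reduce-to-0}.

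One genuine discrepancy, though it is not in your method but in the conclusion: your argument actually proves the \emph{opposite} of what you (and the paper's printed statement) assert at the end. Running Proposition \ref{var-not-exp} as you describe shows that a nontrivial Whittaker vector forces a nonzero element of $\mathrm{Ann}(\phi)$, hence $\phi\in\mathcal{E}$; contrapositively, $\phi\notin\mathcal{E}$ gives $W(\widehat{\mathfrak{sl}_{2}},\phi,\dot{k})_{\phi}=\C w_{\phi}$ and simplicity. Running Proposition \ref{E-Reducible} as you describe shows that $\phi\in\mathcal{E}$ produces a proper submodule. So the criterion your argument establishes, in line with Theorem \ref{second-main-restlt}, is that $W(\widehat{\mathfrak{sl}_{2}},\phi,\dot{k})$ is simple if and only if $\phi\notin\mathcal{E}$; the ``$\phi\in\mathcal{E}$'' in the displayed statement appears to be a typographical slip that you should not reproduce.
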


\begin{remark} Let $\widetilde{\mathfrak{sl}_{2}}=\widehat{\mathfrak{sl}_{2}}\oplus\C\bm{\rd}$
be the affine Kac-Moody Lie algebra of type $A_{1}^{(1)}$. Note that
$(\widetilde{\mathfrak{sl}_{2}},\re\otimes\C[t,t^{-1}]\oplus\C\bm{k})$ is a Whittaker pair. Then
for any $\phi\in\mathcal{A}^*$ and $\dot{k}\in\C$, we can similarly
define the universal Whittaker $\widetilde{\mathfrak{sl}_{2}}$-module
\[W(\widetilde{\mathfrak{sl}_{2}},\phi,\dot{k})=\U(\widetilde{\mathfrak{sl}_{2}})
\otimes_{\U(\re\otimes\C[t,t^{-1}]\oplus\C\bm{k})}\C w_{\phi}.\]
In \cite{M}, Mazorchuk gave a sufficient condition for the Whittaker module
$W(\widetilde{\mathfrak{sl}_{2}},\phi,\dot{k})$ to be simple.
While we can prove that $W(\widetilde{\mathfrak{sl}_{2}},\phi,\dot{k})$ is simple if and only if
$\phi\in\mathcal{E}$ by using Proposition \ref{sl-2}. Our condition is more general than \cite{M}.
\end{remark}

\end{document}